\newtheorem*{corollary*}{Corollary}
\newtheorem{theorem}{Theorem}[section]
\newtheorem{corollary}[theorem]{Corollary}
\newtheorem{lemma}[theorem]{Lemma}
\newtheorem{proposition}[theorem]{Proposition}
\newtheorem*{claim*}{Claim}
\theoremstyle{definition}
\newtheorem{definition}[theorem]{Definition}
\newtheorem{remark}[theorem]{Remark}
\newtheorem{example}[theorem]{Example}
\newtheorem{observation}[theorem]{Observation}
\theoremstyle{remark}
\numberwithin{equation}{theorem}
\renewcommand*\env@matrix[1][\
arraystretch]{%
  \edef\arraystretch{#1}%
  \hskip -\arraycolsep
  \let\@ifnextchar\new@ifnextchar
  \array{*\c@MaxMatrixCols c}}
\renewcommand{\mod}{\operatorname{mod}}
\newcommand{\proj}{\operatorname{proj}}
\newcommand{\inj}{\operatorname{inj}}
\newcommand{\Ext}{\operatorname{Ext}}
\newcommand{\End}{\operatorname{End}}
\newcommand{\Hom}{\operatorname{Hom}}
\newcommand{\add}{\operatorname{\mathrm{add}}}
\newcommand{\rad}{\operatorname{\mathrm{rad}}}
\newcommand{\Z}{\mathbb{Z}}
\newcommand{\D}{\mathcal{D}}
\newcommand{\RHom}{\operatorname{\mathsf{R}Hom}}
\newcommand{\Lotimes}{\otimes^\mathsf{L}}
\newcommand{\Ocal}{\mathcal{O}}
\newcommand{\X}{\mathbb{X}}
\newcommand{\Ll}{\mathbb{L}}
\renewcommand{\ker}{\mathrm{Ker}}
\newcommand{\Db}{\mathrm{D}^{\mathrm{b}}}
\newcommand{\Simp}{\mathcal{S}}
\newcommand{\domdim}{\operatorname{domdim}}
\newcommand{\codomdim}{\operatorname{codomdim}}
\newcommand{\idim}{\operatorname{idim}}
\newcommand{\pdim}{\operatorname{pdim}}
\newcommand{\gldim}{\operatorname{gldim}}
\newcommand{\DynA}{\mathbb{A}}
\newcommand{\DynD}{\mathbb{D}}
\newcommand{\DynE}{\mathbb{E}}
\begin{document}

\title{Auslander-Gorenstein algebras from Serre-formal algebras via replication}
\date{\today}

\subjclass[2010]{Primary 16G10,16E10}

\keywords{Representation theory of finite dimensional algebras, Gorenstein dimension, dominant dimension}

\author{Aaron Chan}
\address{Graduate School of Mathematics, Nagoya University, Furocho, Chikusaku, Nagoya 464-8602, Japan}
\email{aaron.kychan@gmail.com}

\author{Osamu Iyama}
\address{Graduate School of Mathematics, Nagoya University, Furocho, Chikusaku, Nagoya 464-8602, Japan}
\email{iyama@math.nagoya-u.ac.jp}

\author{Ren\'{e} Marczinzik}
\address{Institute of algebra and number theory, University of Stuttgart, Pfaffenwaldring 57, 70569 Stuttgart, Germany}
\email{marczire@mathematik.uni-stuttgart.de}

\begin{abstract}
We introduce a new family of algebras, called Serre-formal algebras.
They are Iwanaga-Gorenstein algebras for which applying any power of the Serre functor on any indecomposable projective module, the result remains a stalk complex.
Typical examples are given by (higher) hereditary algebras and self-injective algebras; it turns out that other interesting algebras such as (higher) canonical algebras are also Serre-formal.
Starting from a Serre-formal algebra, we consider a series of algebras - called the replicated algebras - given by certain subquotients of its repetitive algebra.
We calculate the self-injective dimension and dominant dimension of all such replicated algebras and determine which of them are minimal Auslander-Gorenstein, i.e. when the two dimensions are finite and equal to each other.
In particular, we show that there exist infinitely many minimal Auslander-Gorenstien algebras in such a series if, and only if, the Serre-formal algebra is twisted fractionally Calabi-Yau.
We apply these results to a construction of algebras from Yamagata \cite{Yam}, called SGC extensions, given by iteratively taking the endomorphism ring of the smallest generator-cogenerator.
We give a sufficient condition so that the SGC extensions and replicated algebras coincide.
Consequently, in such a case, we obtain explicit formulae for the self-injective dimension and dominant dimension of the SGC extension algebras.
\end{abstract}

\maketitle

\section{Introduction}
The aim of this paper is to give a systematic construction of a distinguished class of Iwanaga-Gorenstein algebras, called minimal Auslander-Gorenstein algebras, which are defined by the  property 
\[\idim A=d+1=\domdim A\]
for some positive integer $d$, where $\idim A$ and $\domdim A$ are the self-injective dimension and dominant dimension of $A$ respectively.
This class includes higher Auslander algebras which are defined by replacing $\idim A$ with the global dimension $\gldim A$.
These algebras appear naturally in the higher Auslander-Reiten theory on $d$-precluster-tilting subcategories \cite{Iya2,IS}.

Our starting point is a famous article of Kunio Yamagata \cite{Yam} on Frobenius algebras, where the following construction was introduced:
For a given finite dimensional algebra $A=A^{[0]}$, we define a series of finite dimensional algebras by
\[A^{[m+1]}:=\End_{A^{[m]}}(A^{[m]}\oplus DA^{[m]}).\]
We call $A^{[m]}$ the \emph{$m$-th SGC extension} of $A$.
At first glance, one would expect that the algebras $A^{[m]}$ become complicated very quickly as $m$ increases.
Nevertheless, Yamagata observed that in the case when $A$ is hereditary, every $A^{[m]}$ has finite global dimension and large dominant dimension - although he did not calculate their explicit values.
One of our main results gives explicit formulae for these dimensions in the case of $A$ being hereditary.

\begin{theorem}\label{first main}
Suppose $A$ is a hereditary algebra.
\begin{enumerate}
\item If $A$ is representation-infinite, then
$\gldim A^{[m]}=2m+1$ and $\domdim A^{[m]}=2m$.
\item If $A$ is representation-finite, then $\gldim A^{[m]}=\domdim A^{[m]}$ holds for infinitely many $m$.
\end{enumerate}
\end{theorem}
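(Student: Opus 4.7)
The plan is to deduce Theorem~\ref{first main} from the two main technical results of the paper: the computation of self-injective and dominant dimensions of replicated algebras of Serre-formal algebras, and the sufficient condition under which the $m$-th SGC extension $A^{[m]}$ coincides with the $m$-th replicated algebra of $A$. First I would confirm that every hereditary algebra $A$ is Serre-formal. The Serre functor on $\Db(A)$ is the derived Nakayama functor $\nu=-\Lotimes_A DA$, which sends each indecomposable projective to an indecomposable injective, a stalk complex in degree $0$. Because $\gldim A\le 1$, any further iterate $\nu^j P$ also reduces in $\Db(A)$ to a stalk complex (concentrated in degree $0$, or in a single cohomological degree after passing through a non-projective intermediate), so the Serre-formal condition holds.

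Next I would verify that the sufficient condition for the coincidence of SGC extensions and replicated algebras applies in the hereditary setting, so that $A^{[m]}$ is (Morita equivalent to) the $m$-th replicated algebra of $A$. Once this identification is in place, Yamagata's observation that $\gldim A^{[m]}<\infty$ for $A$ hereditary upgrades to the identity $\idim A^{[m]}=\gldim A^{[m]}$, reducing both parts of the theorem to statements about $\idim$ and $\domdim$, which are the dimensions computed in general by the paper.

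For part~(1), when $A$ is representation-infinite hereditary the Serre functor has no finite orbit on indecomposable projectives, so $A$ is not twisted fractionally Calabi-Yau. Plugging $\gldim A=1$ into the general dimension formulae for replicated algebras of Serre-formal algebras should yield $\idim A^{[m]}=2m+1$ and $\domdim A^{[m]}=2m$; combined with $\gldim A^{[m]}=\idim A^{[m]}$, this gives~(1). For part~(2), a representation-finite hereditary $A$ is Morita equivalent to a path algebra of a Dynkin quiver, which is well-known to be twisted fractionally Calabi-Yau. The main theorem characterising when the series contains infinitely many minimal Auslander-Gorenstein members then produces infinitely many $m$ with $\idim A^{[m]}=\domdim A^{[m]}$; again combining with $\gldim A^{[m]}=\idim A^{[m]}$ gives~(2).

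The main obstacle, beyond invoking the paper's general machinery, is the verification of the sufficient condition for the coincidence of SGC extensions and replicated algebras in the hereditary case: one has to track how iteratively forming the endomorphism ring of the smallest generator-cogenerator at each step reproduces the triangular subquotient structure of the repetitive algebra, and in particular to control the possible overlaps between $A$ and $DA$ arising from projective-injective summands, which could a priori reduce the number of simples of $A^{[m]}$ relative to the replicated algebra.
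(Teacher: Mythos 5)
Your overall strategy --- reduce to the replicated algebra via Theorem \ref{comparison} and then quote the dimension formulae (Corollaries \ref{cor-RI} and \ref{cor-RF}) --- is exactly the paper's route, and it works verbatim for part (1) and for most of part (2). However, the point you flag at the end as ``the main obstacle'' is a genuine gap, not merely a verification left to be carried out. The hypothesis $\Hom_A(DA,A)=0$ fails precisely when $A$ admits an indecomposable projective-injective module, and for connected hereditary algebras this happens exactly when $A$ is a linearly oriented path algebra of type $\DynA_n$ (a hereditary Nakayama algebra). In that case $A^{[m]}$ is genuinely \emph{not} Morita equivalent to $A^{(m)}$: by Example \ref{eg-naka-SGC} one has $A^{[m]}\cong T_{n+m(n-1),\,n}$, whereas by Example \ref{eg-typeA-Kronecker}(1) one has $A^{(m)}\cong T_{n(m+1),\,n+1}$; these have different numbers of simple modules and different Loewy lengths, so no amount of ``controlling the overlaps'' between $\add A$ and $\add DA$ will restore the identification. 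Since these exceptional algebras are representation-finite, the gap sits squarely inside part (2); part (1) is unaffected because a representation-infinite hereditary algebra is never of this form, so there $e=1$ and Corollary \ref{cor-e=1} applies, after which Corollary \ref{cor-RI} gives $\gldim A^{[m]}=2m+1$ and $\domdim A^{[m]}=2m$ as you say.

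The paper closes the remaining case by a separate explicit computation (Subsection \ref{subsec-SGC-typeA}): it sets up recursion formulae for the injective and dominant dimensions of the indecomposable $T_{n,l}$-modules (Lemma \ref{lem-recursion}), derives closed formulae for $\gldim T_{n,l}$ and $\domdim T_{n,l}$ (Proposition \ref{prop-naka-1}), and concludes that $T_{n,n}^{[m]}$ is a higher Auslander algebra if and only if $n=2$ or $n$ divides $m$ (Theorem \ref{thm-naka-SGC}), which still yields infinitely many such $m$ and hence completes part (2). You would need to supply this (or an equivalent) argument for the linear $\DynA_n$ case; for all other hereditary base algebras your outline matches the paper's proof.
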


The majority of Theorem \ref{first main} is proved by considering the same problem on the \emph{replicated algebras} of $A$.
The $m$-th replicated algebra of $A$ is defined as a $(m+1)\times (m+1)$ matrix algebra:
\[A^{(m)}=\left(\begin{array}{ccccc}
A & 0 & 0 & \cdots & 0  \\
DA & A & 0 &  & 0 \\
\vdots & \ddots & \ddots & \ddots & \vdots \\
0 & & \ddots & \ddots  &\vdots \\
0 & 0 & \cdots & DA & A
\end{array}\right).
\]
Such construction of algebras was studied in, for example, \cite{AssIwa,ABST,LvZh}; but it seems to us that its connection with SGC extensions (Theorem \ref{thm-SGC-replicate}) has not been exploited before; in fact, these two constructions of new algebras from $A$ coincide under a mild assumption.

\begin{theorem}\label{comparison}
Let $A$ be a finite dimensional algebra such that $\Hom_A(DA,A)=0$.
Then the $m$-th SGC extension $A^{[m]}$ is Morita equivalent to the $m$-th replicated algebra $A^{(m)}$.
\end{theorem}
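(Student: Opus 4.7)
The plan is to proceed by induction on $m$; the base case $m=0$ is the tautology $A^{[0]}=A=A^{(0)}$. For the inductive step, I would assume $A^{[m]}$ is Morita equivalent to $A^{(m)}$, set $B := A^{(m)}$, and use Morita invariance of endomorphism algebras to reduce the problem to showing $\End_B(B\oplus DB) \simeq A^{(m+1)}$.

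The first substep is to identify the basic additive closure of $B\oplus DB$. Using the matrix presentation of $B=A^{(m)}$, one describes its indecomposable projective modules as $P_i^{(k)} := B e_i^{(k)}$ and its indecomposable injectives as $I_i^{(k)} := D(e_i^{(k)}B)$, where $i$ indexes the simple $A$-modules and $0 \le k \le m$ the blocks. A direct computation---driven by the Nakayama functor $DA\otimes_A-$ sending $P_i^A$ to a corresponding $I_{\pi(i)}^A$---reveals that both $P_i^{(k)}$ and $I_{\pi(i)}^{(k+1)}$ carry the same two-layered structure ($P_i^A$ in block $k$ on top, $I_{\pi(i)}^A$ in block $k+1$ at the bottom, glued by the subdiagonal $DA$), so $P_i^{(k)} \cong I_{\pi(i)}^{(k+1)}$ for $0\le k\le m-1$. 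Hence every indecomposable injective in blocks $1,\dots,m$ already appears as a summand of $B$, and the basic additive closure of $B\oplus DB$ is $B\oplus I^{(0)}$, where $I^{(0)} := \bigoplus_i I_i^{(0)} \cong DA$ is regarded as a $B$-module via the canonical quotient $B \twoheadrightarrow A$ onto the $(0,0)$-block.

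The central technical step is the vanishing $\Hom_B(I^{(0)},B)=0$. Since $I^{(0)}$ is annihilated by $1-e^{(0)}$, any $B$-map $I^{(0)}\to Be_i^{(k)}$ lands in $e^{(0)} Be_i^{(k)}$, which is zero for $k\ge 1$ by lower-triangularity. For $k=0$ and $m\ge 1$ the socle of $Be_i^{(0)}$ lies in block $1$ (as $Be_i^{(0)}\cong I_{\pi(i)}^{(1)}$), so no nonzero $B$-submodule sits inside the block-$0$ subspace $P_i^A$ and the map must vanish. The remaining case $k=0,\ m=0$ reduces precisely to $\Hom_A(DA,A)=0$, which is the standing hypothesis---this is the essential use of the assumption. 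Combining the vanishing with $\End_B(B)=B$, $\End_B(I^{(0)})=\End_A(DA)=A$, and $\Hom_B(B,I^{(0)})=I^{(0)}$ (with bimodule structures carefully tracked), the endomorphism algebra becomes
\[
\End_B(B\oplus I^{(0)}) \cong \begin{pmatrix} B & I^{(0)} \\ 0 & A \end{pmatrix},
\]
which, after swapping the two diagonal idempotents, is the lower-triangular matrix algebra $\begin{pmatrix} A & 0 \\ I^{(0)} & B \end{pmatrix}$. Comparing with the recursive decomposition $A^{(m+1)} = \begin{pmatrix} A & 0 \\ N & A^{(m)} \end{pmatrix}$, where $N$ is the first subdiagonal column of $A^{(m+1)}$ identified as an $A^{(m)}$-$A$-bimodule with $I^{(0)}$, yields $\End_B(B\oplus I^{(0)}) \cong A^{(m+1)}$ and completes the induction.

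The main obstacle will be the vanishing step above: the two regimes ($m=0$ using the hypothesis, $m\ge 1$ using the internal socle structure of $B$) must both be handled, and in the final matrix identification one must verify carefully that the bimodule $I^{(0)} = \Hom_B(B,I^{(0)})$ really matches the subdiagonal bimodule in $A^{(m+1)}$---straightforward bookkeeping, but the point where the SGC and replicated constructions' conventions have to align.
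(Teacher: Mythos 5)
Your proposal is correct and follows essentially the same route as the paper's proof (Theorem~\ref{thm-SGC-replicate}, of which Theorem~\ref{comparison} is the case $e=1$ via Corollary~\ref{cor-e=1}): induction on $m$, identification of the projective--injectives of $A^{(m)}$ so that $\add(B\oplus DB)=\add(B\oplus I)$ for a single corner stalk $I\cong DA$, the key vanishing $\Hom_B(I,B)=0$ (automatic for $m\ge 1$ from the triangular structure, and exactly the hypothesis $\Hom_A(DA,A)=0$ in the base step), and the matching of the resulting one-sided triangular $2\times 2$ matrix algebra with the recursive block decomposition of $A^{(m+1)}$. The only cosmetic differences are your choice of additive generator $B\oplus I^{(0)}$ in place of the paper's $[A]_0\oplus DB$ and a left- versus right-module convention (which relocates the new injective to the opposite corner block); neither changes the substance of the argument.
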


It is usually easier to understand the behaviour of $A^{(m)}$ compared to that of $A^{[m]}$.
As such, the majority of the calculations carried out in this paper are on the replicated algebras.
In fact, we can find the formulae for the dominant dimension and the injective dimension of each indecomposable projective $A^{(m)}$-module in terms of certain combinatorial data (including the Coxeter numbers; see Corollaries \ref{cor-RF} and \ref{cor-RI}).
Theorem \ref{comparison} enables us to deduce Theorem \ref{first main} from these calculations for $A^{(m)}$, apart from one particular case (namely, when $A$ is a Nakayama algebra).

In fact, our main result is much more general, and is stated for a much wider class of algebras - we call them \emph{Serre-formal algebras}.
Recall that the perfect derived category of an Iwanaga-Gorenstein algebra $A$ has a Serre functor $\nu$.
We call an Iwanaga-Gorenstein algebra $A$ \emph{Serre-formal} if $\nu^i(A)$ is isomorphic to a direct sum of stalk complexes for any $i\in\mathbb{Z}$ in the bounded derived category $\Db(\mod A)$ of finitely generated $A$-modules.
This class of algebras is closed under tensor products (Proposition \ref{prop-tensor}), and contains several important subclasses which are central in representation theory. 
For example, self-injective algebras and higher hereditary algebras - such as path algebras of finite acyclic quivers and Beilinson algebras - are all Serre-formal.
Moreover, algebras arising from symmetrisable Cartan matrices \cite{GLS}, and Ringel's canonical algebras \cite{Rin} as well as higher canonical algebras of \cite{HIMO} (Theorem \ref{thm-canonical}), are also Serre-formal.

From the perspective of this paper, the main advantage of considering Serre-formal algebras is that we can determine the explicit formulae for the dominant and injective dimensions of each indecomposable projective $A^{(m)}$-module (Theorem \ref{thm-dims2}).
Hence, we can obtain a replicated algebra version of Theorem \ref{first main} for Serre-formal algebras.

We call an algebra $A$ \emph{periodically Serre-formal} if for each indecomposable projective $A$-module $P$, there exists a positive integer $\ell$ so that $\nu^{-\ell}(P)$ is also projective.
It is easy to see that in the case of $A$ being hereditary, this is equivalent to $A$ being representation-finite.
Moreover, recall from \cite{HerIya} that an algebra $A$ is \emph{twisted fractionally Calabi-Yau} if there is a pair of integers $(m\neq 0,n)$ so that $\nu^m(A)\cong A[n]$ in $\Db(\mod A)$.
In such a case, we say that the twisted Calabi-Yau dimension of $A$ is $(m,n)$ if $m$ is the smallest positive integer for which such an isomorphism holds.
We will show the following equivalences (see Proposition \ref{prop-twistedCY}, Corollary \ref{cor-minAG2}).

\begin{theorem}\label{second main}
Let $A$ be a finite dimensional algebra which is Serre-formal.
Then the following conditions are equivalent.
\begin{itemize}
\item $A$ is periodically Serre-formal.
\item $A$ is twisted fractionally Calabi-Yau.
\item $A^{(m)}$ is minimal Auslander-Gorenstein for some (or infinitely many) positive integer $m$.
\end{itemize}
In such a case, let $(h,s)$ be the twisted Calabi-Yau dimension of $A$, then $A^{(m)}$ being minimal Auslander-Gorenstein is equivalent to $m=ht-1$ for some integer $t>0$.
Moreover, we have
\[
\idim A^{(m)} = (h+s)t-1 =\domdim A^{(m)}.
\]
\end{theorem}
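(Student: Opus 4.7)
The plan is to split the three-way equivalence into two conceptually distinct parts: the equivalence between being periodically Serre-formal and being twisted fractionally Calabi-Yau is an intrinsic statement about $A$ and should not require any information about the replicated algebras; the equivalence with the third bullet then follows by invoking Theorem \ref{thm-dims2} and doing a combinatorial computation.

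For the first equivalence, the implication from twisted fractionally Calabi-Yau to periodically Serre-formal is essentially tautological: if $\nu^h(A)\cong A[s]$ in $\Db(\mod A)$, then for each indecomposable projective $P$ we have $\nu^{-h}(P)$ equal to a shift of a projective, and iterating $\lvert s\rvert$ more times if needed produces a projective on the nose. For the converse, since there are only finitely many isomorphism classes of indecomposable projectives, one chooses a common $\ell>0$ so that $\nu^{-\ell}(P)\cong Q_P$ is projective for every indecomposable projective $P$; then by Serre-formality $\nu^{\ell}(Q_P)$ is a shift of $P$, yielding summand-wise $\nu^{\ell}(A)\cong\bigoplus_P P[n_P]$. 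The substantive point is to make the shifts $n_P$ uniform, possibly after replacing $\ell$ by a multiple; one expects this to follow because the shift accumulates additively along $\nu$-orbits and is constrained by the Auslander--Reiten translation structure (so the ratio shift-per-orbit-length is an invariant of the connected component).

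Next, I would invoke Theorem \ref{thm-dims2} for the explicit formulae of $\idim P^{(j)}$ and $\domdim P^{(j)}$, where $P^{(j)}$ denotes the indecomposable projective $A^{(m)}$-module corresponding to placing the indecomposable projective $A$-module $P$ in the $j$-th of the $m+1$ diagonal blocks. Under the twisted fractionally Calabi-Yau assumption with dimension $(h,s)$, these formulae simplify drastically because the $\nu$-orbit of every indecomposable projective has length $h$ with total shift $s$. I would then take $\idim A^{(m)}=\max_{P,j}\idim P^{(j)}$ and $\domdim A^{(m)}=\min_{P,j}\domdim P^{(j)}$, show that both equal $(h+s)t-1$ precisely when $m+1=ht$, and verify that equality (or even finiteness of $\idim$) fails for other values of $m$. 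To close the circle, one notes that if $A^{(m)}$ is minimal Auslander-Gorenstein for some $m$, then in particular $\idim A^{(m)}<\infty$, which forces every $\nu$-orbit of indecomposable projectives to be finite, hence $A$ is periodically Serre-formal. This also shows simultaneously the equivalence between \emph{some} $m$ and \emph{infinitely many} $m$, since the family $\{ht-1\mid t>0\}$ is infinite.

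The main obstacle I anticipate is the coordination of shifts across distinct $\nu$-orbits in the proof that periodically Serre-formal implies twisted fractionally Calabi-Yau; once that uniform $(h,s)$ is in hand, the remainder of the theorem should reduce to routine (if intricate) manipulations of the formulae from Theorem \ref{thm-dims2}.
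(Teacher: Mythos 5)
Your overall architecture matches the paper's: the intrinsic equivalence between periodic Serre-formality and the twisted fractionally Calabi-Yau property is Proposition \ref{prop-twistedCY}, and the rest follows from the dimension formulae of Theorem \ref{thm-dims2} via Corollaries \ref{cor-minAG} and \ref{cor-minAG2}. However, the step you yourself flag as the main obstacle --- making the shifts $n_P$ uniform across distinct $\nu$-orbits --- is exactly the content of that proposition, and you leave it as an expectation rather than an argument: ``the ratio shift-per-orbit-length is an invariant of the connected component'' is the statement to be proved, not a reason for it. The paper's mechanism is a Hom-nonvanishing trick. Writing $h_x$ for the total length of the $\sigma$-cycle of orbits through $P_x$ and $p_x$ for the accumulated shift, so that $\nu^{-h_x}(P_x)\cong P_x[p_x]$, one takes $x,y\in\Simp$ with $\Hom_A(P_x,P_y)\neq 0$ and applies the autoequivalence $\nu^{-h_xh_y}$ to obtain $0\neq\Hom_{\D}\bigl(P_x[h_yp_x],P_y[h_xp_y]\bigr)$, which forces $h_yp_x=h_xp_y$; ring-indecomposability then propagates the equality of the ratios $p_x/h_x$ to all of $\Simp$, and taking $m$ to be the least common multiple of the $h_x$ gives a uniform shift. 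Without some such argument the implication from periodically Serre-formal to twisted fractionally Calabi-Yau is not established.

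There is also an error in your closing step. The claim that ``$\idim A^{(m)}<\infty$ forces every $\nu$-orbit of indecomposable projectives to be finite'' is false: Theorem \ref{thm-dims2} shows that $A^{(m)}$ is Iwanaga-Gorenstein for \emph{every} Serre-formal $A$, periodic or not --- for instance, a representation-infinite hereditary $A$ has $\idim A^{(m)}=2m+1<\infty$ while no orbit is finite. For the same reason your parenthetical suggestion that finiteness of $\idim A^{(m)}$ might fail for $m$ outside the family $\{ht-1\}$ is incorrect. What the minimal Auslander-Gorenstein hypothesis actually buys is the \emph{equality} $\domdim A^{(m)}=\idim A^{(m)}$, which by the formulae of Theorem \ref{thm-dims2} and the monotonicity of the functions $s_x^-$ forces $s_x^-(m)=s_y^-(m+1)$ for all $x,y\in\Simp$, equivalently that $P_x^{\succ m}$ is injective for all $x$, equivalently that $\nu^{-(m+1)}(A)\cong A[n]$ for some $n$. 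This is how the paper (Corollary \ref{cor-minAG}) recovers the twisted Calabi-Yau property from a single value of $m$, and it is the step your proposal needs in place of the finiteness argument.
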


In the special case of higher hereditary algebras, we have the following result (see Corollary \ref{cor-dim-dhered}, Proposition \ref{prop-RF-d}).
We refer the reader to Subsection \ref{subsec-NF-Bkgd} and Subsection \ref{subsec-NF-d-hered} for details on the terminologies.
\begin{theorem}\label{first main-d}
Let $A$ be a $d$-hereditary algebra with $d\ge1$.
\begin{enumerate}
\item If $A$ is $d$-representation-infinite, then
$\gldim A^{(m)}=(d+1)m+d$ and $\domdim A^{(m)}=(d+1)m$.
\item If $A$ is $d$-representation-finite, then $\gldim A^{(m)}=\domdim A^{(m)}$ holds for infinitely many $m$.
\end{enumerate}
\end{theorem}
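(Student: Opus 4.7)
The plan is to deduce the theorem by invoking Theorem \ref{second main} and the general formulas for $\idim A^{(m)}$ and $\domdim A^{(m)}$ in the Serre-formal setting, specialised to the $d$-hereditary case, and then upgrade the resulting $\idim$ equality to a $\gldim$ equality via a separate finite-global-dimension argument.

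First I would verify that every $d$-hereditary algebra is Serre-formal, as already stated in the introduction: for $d$-representation-infinite $A$ each $\nu^{-i}(A)$ is a module concentrated in cohomological degree $-di$; for $d$-representation-finite $A$ the full $\nu$-orbit of $A$ consists of stalk complexes in degrees that are multiples of $d$. Moreover $\gldim A\leq d<\infty$, a fact I expect to propagate to each $A^{(m)}$.

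For (1), $d$-representation-infinity implies that the $\nu$-orbit of $A$ contains infinitely many pairwise non-isomorphic stalks, so $A$ is not periodically Serre-formal and by Theorem \ref{second main} no $A^{(m)}$ is minimal Auslander-Gorenstein. The decisive combinatorial input is that applying $\nu^{-1}$ shifts cohomological degree by exactly $d$, so the general formulas for $\idim A^{(m)}$ and $\domdim A^{(m)}$ specialise to $\domdim A^{(m)}=(d+1)m$ and $\idim A^{(m)}=(d+1)m+d$. For (2), $d$-representation-finiteness yields periodic $\nu_d$-orbits; if $h$ denotes the smallest positive integer with $\nu_d^h(A)\cong A$, equivalently $\nu^h(A)\cong A[dh]$ in $\Db(\mod A)$, then $A$ is twisted fractionally Calabi-Yau of dimension $(h,dh)$. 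Theorem \ref{second main} with $s=dh$ yields that $A^{(m)}$ is minimal Auslander-Gorenstein precisely for $m=ht-1$, $t>0$, with
\[
\idim A^{(m)}=(h+dh)t-1=(d+1)ht-1=\domdim A^{(m)}.
\]

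The main obstacle is the finite-global-dimension upgrade, which the Serre-formal framework does not provide directly. To show $\gldim A^{(m)}<\infty$ I would construct explicit finite projective resolutions for the simple $A^{(m)}$-modules sitting in the intermediate diagonal layers of the matrix algebra, exploiting that $DA$ has projective dimension at most $d$ over $A$ (since $\gldim A\leq d$) together with the layered block structure of $A^{(m)}$. Once finiteness is established, the classical identity $\gldim=\idim$ for Artin algebras of finite global dimension yields $\gldim A^{(m)}=(d+1)m+d$ in (1); in (2), the same finiteness together with the minimal Auslander-Gorenstein property promotes $A^{(m)}$ to a higher Auslander algebra, giving $\gldim A^{(m)}=\domdim A^{(m)}$ for the infinitely many $m=ht-1$. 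A secondary subtle point in (2) is identifying the twisted Calabi-Yau shift as exactly $dh$ rather than some other multiple, which uses that $\nu_d$ preserves $\mod A$ on the $d$-cluster-tilting subcategory, so each period of length $h$ contributes precisely $dh$ to the cohomological degree.
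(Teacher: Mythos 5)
Your treatment of part (1) is essentially the paper's: specialise Theorem \ref{thm-dims2} using $s_x^-(k)=-kd$ for $d$-representation-infinite $A$ (this is exactly Corollary \ref{cor-dim-dhered}), then upgrade $\idim$ to $\gldim$ via finiteness of the global dimension. One remark: you do not need to construct finite projective resolutions by hand for that last step --- Lemma \ref{lem-gldim} already gives $\gldim A^{(m)}\le (m+1)d+m<\infty$, and the paper simply invokes it before concluding $\gldim A^{(m)}=\idim A^{(m)}$.

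Part (2) contains a genuine error. You define $h$ as the smallest positive integer with $\nu_d^h(A)\cong A$, ``equivalently $\nu^h(A)\cong A[dh]$'', and conclude that $A$ is twisted fractionally Calabi--Yau of dimension $(h,dh)$. No such $h$ exists for a non-semisimple $d$-representation-finite algebra: each application of $\nu_d^{-1}$ to a non-injective module in the orbit stays in $\mod A$ (degree shift $0$), but each application to an injective $I_y$ gives $\nu^{-1}(I_y)[d]=P_y[d]$, contributing a shift of $[d]$. Hence if the orbit of $P_x$ returns to a projective after $h$ steps having passed through $r\geq 1$ injectives, one gets $\nu_d^{-h}(A)\cong A[rd]$, i.e.\ $\nu^{h}(A)\cong A[(h-r)d]$, never $A[dh]$. (Check $d=1$, $A=K\vec\DynA_2$: $\nu_1^{-3}(A)\cong A[2]$, not $A$.) The correct twisted Calabi--Yau dimension is $(h,(h-r)d)$, and the resulting value is $\gldim A^{(m)}=((d+1)h-dr)t-1$ rather than your $(d+1)ht-1$; this is Proposition \ref{prop-RF-d}, and it is consistent with Corollary \ref{cor-RF} in the hereditary case (where $r=1$ or $2$), whereas your formula is not. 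Since part (2) of the theorem only asserts $\gldim A^{(m)}=\domdim A^{(m)}$ for infinitely many $m$, the qualitative conclusion survives once the periodicity claim is corrected ($d$-representation-finiteness does imply periodic Serre-formality, hence twisted fractional Calabi--Yau-ness by Proposition \ref{prop-twistedCY}), but as written the integer $h$ you work with is undefined and the dimension formula you derive from it is wrong.
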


The structure of the paper is as follows.
In Section \ref{sec-Repl-Alg}, we present some elementary properties of $m$-th replicated algebras $A^{(m)}$ of arbitrary finite dimensional algebra $A$.
In Section \ref{sec-Hered-base}, we give more in-depth studies - in particular, the homological dimensions - of $A^{(m)}$ in the case when $A$ is hereditary.
In Section \ref{sec-Nu-formal}, we introduce Serre-formal algebras, study their basic properties (Subsection \ref{subsec-NF-Bkgd}), and give basic examples (Subsections \ref{subsec-NF-naka}, \ref{subsec-NF-canonical}).
Moreover, we investigate $A^{(m)}$ for the case when $A$ is Serre-formal in a similar manner as Section \ref{sec-Hered-base}, which yields Theorem \ref{second main} (Subsection \ref{subsec-NF-dims}) and Theorem \ref{first main-d} (Subsection \ref{subsec-NF-d-hered}).
In Section \ref{sec-SGC}, we explain the connection between the $m$-th replicated algebra $A^{(m)}$ and the $m$-th SGC extension $A^{[m]}$ (Subsection \ref{subsec-SGC-replicate}); hence, showing Theorem \ref{comparison}.
We then complete the proof of Theorem \ref{first main} by calculating the dominant and global dimension of certain Nakayama algebras explicitly (Subsection \ref{subsec-SGC-typeA}).

\section*{Acknowledgement}
This project started with numerous fruitful experiments thanks to \cite{QPA}.
AC is supported by JSPS International Research Fellowship, and partly by IAR Research Project, Institute for Advanced Research, Nagoya University.

\section{Preliminaries}\label{sec-Prelim}
Fix throughout a field $K$.
Algebras will always mean finite dimensional $K$-algebras.
We follow the notation and conventions of \cite{SkoYam}.
In particular, all modules are finite dimensional right modules and all maps are module maps unless otherwise stated.
The $K$-linear duality is denoted by $D(-)$.
Unless otherwise specified, all algebras are ring-indecomposable, non-simple, finite dimensional over $K$, and basic.

Let $A$ be an algebra.
The Nakayama functor $D\Hom_A(-,A_A)$ and its inverse are denoted by $\nu_A$ and $\nu_A^{-}$ respectively.
The Auslander-Reiten translate and its inverse are denoted by $\tau_A$ and $\tau_A^{-}$ respectively.
The full subcategory of projective (resp. injective) $A$-modules is denoted by $\proj A$  (resp. $\inj A$).
For a module $M$, we denote by $|M|$ the number of indecomposable direct summands of $M$ up to isomorphism.
We will usually denote an idempotent by $e$, or by $e_i$ for a label $i$ of (an isomorphism class of) a simple module.

\medskip

Let $M,N$ be $A$-modules.
We say that $M$ has an ($\add N$)-resolution (resp. ($\add N$)-coresolution) if there is an infinite exact sequence
\begin{align}
\begin{array}{rlllll}
\cdots & \to N_2 & \to N_1 & \to N_0 & \to M & \to 0 \\
(\text{resp. } \quad 0 &\to M & \to N^0& \to N^1& \to N^2 &\to \cdots)
\end{array}\notag 
\end{align}
so that $N_i\in \add N$ (resp. $N^i\in\add N$).

Let $M$ be an $A$-module with minimal injective coresolution $I^\bullet=(I^n)_{n\geq 0}$ and minimal projective resolution $P_\bullet=(P_n)_{n\geq 0}$.
The \emph{dominant dimension} and \emph{codominant dimension} of $M$ are
\begin{align}
\domdim M& :=\begin{cases}
1+\sup \{ n \geq 0 \mid I^j \in \proj A \text{ for } j=0,1,...,n \} & \mbox{ if $I^0$ is projective,}\\
0 & \mbox{ if $I^0$ is not projective;}
\end{cases}\notag \\
\codomdim M & :=\begin{cases}
1+\sup \{ n \geq 0 \mid P_j \in \inj A \text{ for } j=0,1,...,n \} & \mbox{ if $P_0$ is injective,}\\
0 & \mbox{ if $P_0$ is not injective.}
\end{cases}\notag
\end{align}
respectively.

The \emph{dominant dimension} of $A$, denoted by $\domdim A$, is the dominant dimension of the regular $A$-module $A_A$.
Note that we always have $\domdim A = \codomdim DA$, see \cite{Ta} for example.

\medskip

The \emph{left} (resp. \emph{right}) \emph{self-injective dimension of $A$} is the injective dimension of $A$ as a left (resp. right) $A$-module.
An algebra $A$ is called \emph{Iwanaga-Gorenstein} if both its left and right self-injective dimensions are finite.
When these dimensions are finite, then they are equal.
If this number is equal to $d<\infty$, then we simply call $A$ a $d$-Iwanaga-Gorenstein algebra \cite{EncJen}.
Note that this is \emph{not} the same as the $k$-Gorenstein algebras which appear in \cite{AR}.

For convenience, we recall the following.
\begin{lemma}\label{lem-IGfacts}
Let $A$ be a $d$-Iwanaga-Gorenstein algebra, and $M$ be an $A$-module.
\begin{enumerate}[(1)]
\item {\rm (Iwanaga, 1980)} The following are equivalent.
\begin{itemize}
\item $\pdim_AM\leq d$.
\item $\idim_AM\leq d$.
\item $\pdim_AM< \infty$.
\item $\idim_AM< \infty$.
\end{itemize}

\item The following are equivalent for non-projective $M$.
\begin{itemize}
\item $M$ is the zero-th homology of a totally acyclic complex in $\mod A$ (i.e. it is Gorenstein projective).
\item $M\cong \Omega_A^d(X)$ for some $X\in\mod A$.
\item $\Ext_A^i(M,A)=0$ for all $i\in\{1,2,\ldots, d\}$ (i.e. $M$ is maximal Cohen-Macaulay).
\end{itemize}
\end{enumerate}
\end{lemma}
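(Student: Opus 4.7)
The plan is to treat (1) and (2) together, noting that the central fact underpinning both is that $\idim A_A=d$ implies $\Ext_A^i(-,A)=0$ for all $i>d$ (and dually, $\idim {}_AA=d$ gives the same vanishing over $A^{\mathrm{op}}$).

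For part (1), the implications ``$\le d\Rightarrow{<}\infty$'' are vacuous, and by working over $A^{\mathrm{op}}$ it suffices to prove $\pdim_A M<\infty\Rightarrow\pdim_A M\le d$. I would consider $N:=\Omega_A^d M$: dimension shifting, together with $\Ext_A^i(-,A)=0$ for $i>d$, gives $\Ext_A^i(N,A)=0$ for every $i\ge 1$. Since $\pdim_A N\le\max\{0,\pdim_A M-d\}<\infty$, the task reduces to the sublemma ``a module $N$ of finite projective dimension with $\Ext_A^{\ge 1}(N,A)=0$ is projective''. For this, apply $\Hom_A(-,A)$ to the minimal projective resolution $0\to P_n\to\cdots\to P_0\to N\to 0$: the resulting bounded complex $0\to N^\ast\to P_0^\ast\to\cdots\to P_n^\ast\to 0$ is exact by the Ext-vanishing, so the surjection $P_{n-1}^\ast\to P_n^\ast$ splits; dualising back contradicts minimality unless $n=0$. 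This gives $\pdim_A M\le d$, and the injective version is symmetric.

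For part (2), the easy implications are dimension-shifting: $M\cong\Omega_A^d X$ yields $\Ext_A^i(M,A)=\Ext_A^{i+d}(X,A)=0$ for $1\le i\le d$, and the Gorenstein projective condition implies the $d$-th syzygy condition by taking $X$ to be the image at position $d$ in the right half of a defining totally acyclic complex. The nontrivial direction is that the Ext-vanishing implies Gorenstein projectivity. Here I would first upgrade the hypothesis to $\Ext_A^i(M,A)=0$ for \emph{all} $i\ge 1$, which is immediate from $\Ext_A^i(-,A)=0$ for $i>d$. The totally acyclic complex is then produced by splicing the minimal projective resolution of $M$ on the left with the $\Hom_A(-,A)$-dual of a minimal projective resolution of the left $A$-module $\Hom_A(M,A)$ on the right; the exactness of the right half after re-applying $\Hom_A(-,A)$ is the $A^{\mathrm{op}}$-parallel of the upgrade, invoking $\idim{}_AA=d$ once more.

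The main obstacle is the construction of the totally acyclic complex in the final direction of (2): one must check that the two halves splice correctly, which amounts to establishing the reflexivity isomorphism $M\cong\Hom_A(\Hom_A(M,A),A)$ under the Ext-vanishing, together with the corresponding $\Ext^{\ge 1}_{A^{\mathrm{op}}}(\Hom_A(M,A),A)=0$ symmetry. All other steps reduce either to a routine dimension shift against $\Ext_A^{>d}(-,A)=0$ or to the sublemma established in part (1).
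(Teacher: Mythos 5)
The paper offers no proof of this lemma: it is stated as a recollection of classical facts, with part (1) attributed to Iwanaga and part (2) being the standard Auslander--Bridger/Buchweitz description of Gorenstein projective modules over an Iwanaga--Gorenstein ring. So the only question is whether your argument stands on its own, and there are two genuine gaps.

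In part (1), your scheme proves $\pdim_AM<\infty\Rightarrow\pdim_AM\le d$ and, by duality over $A^{\mathrm{op}}$, $\idim_AM<\infty\Rightarrow\idim_AM\le d$. That sublemma (dualising a minimal finite projective resolution and splitting off the last term) is correct. But this only yields the two separate equivalences $\{\pdim\le d\}\Leftrightarrow\{\pdim<\infty\}$ and $\{\idim\le d\}\Leftrightarrow\{\idim<\infty\}$; nothing in your outline connects the projective-dimension conditions to the injective-dimension ones, and that bridge is precisely the content of Iwanaga's theorem. It is easy to supply --- since $\idim A_A\le d$ every projective has injective dimension at most $d$, and since $\pdim (DA)_A=\idim{}_AA\le d$ every injective has projective dimension at most $d$, so a finite projective resolution forces $\idim_AM<\infty$ and conversely --- but as written the four conditions are not all linked.

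In part (2), the cycle (Gorenstein projective) $\Rightarrow$ ($d$-th syzygy) $\Rightarrow$ (Ext-vanishing) is fine, and the upgrade from $\Ext^i_A(M,A)=0$ for $1\le i\le d$ to all $i\ge1$ is correct. The hard implication, however, is only described, not proved: you correctly identify that splicing requires the reflexivity $M\cong\Hom_{A^{\mathrm{op}}}(\Hom_A(M,A),A)$ and the vanishing $\Ext^{\ge1}_{A^{\mathrm{op}}}(\Hom_A(M,A),A)=0$, but you leave both as ``the main obstacle'' without an argument, and neither follows from a routine dimension shift. The clean way to close this is to use that $\RHom_A(-,A)$ is a duality $\Db(\mod A)\to\Db(\mod A^{\mathrm{op}})$ for an Iwanaga--Gorenstein algebra: the Ext-vanishing says $\RHom_A(M,A)$ is the stalk complex $M^\ast$, whence $\RHom_{A^{\mathrm{op}}}(M^\ast,A)\cong M$ gives both missing facts at once. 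Without some such argument the proof of the last implication is incomplete.
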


Every algebra of finite global dimension $d$ is $d$-Iwanaga-Gorenstein.
On the other hand, a $d$-Iwanaga-Gorenstein algebra has global dimension either $d$ or $\infty$.

\begin{definition}
A non-self-injective algebra $A$ is \emph{higher Auslander} (resp. \emph{minimal Auslander-Gorenstein}), or more specifically \emph{$d$-Auslander} (resp. \emph{$d$-Auslander-Gorenstein}), if $\gldim A=d+1=\domdim A$ (resp. $\idim {}_AA=\idim A_A=d+1=\domdim A$) for some positive $d$.
\end{definition}

\section{Replicated Algberas}\label{sec-Repl-Alg}
Let us review the construction of replicated algebras and the description of their modules.

Recall that for a finite dimensional algebra $A$, there is an infinite dimensional locally bounded algebra called \emph{repetitive algebra} $\hat{A}$.
The underlying vector space is given by $(\bigoplus_{i\in\Z}A)\oplus(\bigoplus_{i\in\Z}DA)$, with elements of the form $(a_i,f_i)_{i\in\Z}$ with finitely many non-zero $a_i$ and $f_i$'s.
The multiplication is defined by $(a_i,f_i)_i(b_i,g_i)_i=(a_ib_i,a_{i+1}g_i+f_ib_i)_i$.
$\hat{A}$ can be regarded as a $\Z\times \Z$ matrix (non-unital but locally unital) algebra with diagonal entries $\hat{A}_{i,i}=A$, sub-diagonal entries $\hat{A}_{i,i-1}=DA$, and everywhere else zero.

\begin{definition}
Let $A$ be a finite dimensional algebra.
Denote by $e^{(m)}$ the idempotent of $\hat{A}$ given by the $\Z\times\Z$ matrix with $(i,i)$-th entry $1$ for all $i\in\{0,1,\ldots,m\}$ and zero everywhere else.
For $m\geq 0$, the \emph{$m$-replicated algebra} $A^{(m)}$ of $A$ is the idempotent truncation $e^{(m)}\hat{A}e^{(m)}$.
In other words, we have
\begin{align}
A^{(m)}:=e^{(m)} \hat{A}e^{(m)} =
\left(\begin{array}{cccccc}
A & 0 & 0 & \cdots & 0 & 0 \\
DA & A & 0 &  & & 0 \\
0 & DA & A &  & & 0 \\
\vdots & \ddots & \ddots & \ddots & &\vdots \\
0 & & \ddots & \ddots & \ddots &\vdots \\
0 & 0 & 0 & \cdots & DA & A
\end{array}\right), \notag 
\end{align}
where the matrix is of size $(m+1)\times (m+1)$.
We call $A$ the \emph{base algebra}.
\end{definition}

\begin{example}\label{eg-typeA-Kronecker}
(1) Let $A$ be the path algebra of type $\DynA_n$ with linear orientation (i.e. $A$ is a Nakayama algebra with $n$ simple modules).
Then $A^{(m)}=KQ/I$ with $Q$ of type $\DynA_{n(m+1)}$ and $I$ generated by all paths of length $n+1$.

(2) Let $A$ be the Kronecker algebra $K(1\rightrightarrows 2)$.
Then $A^{(m)} = KQ/I$ with
\[
Q: \xymatrix@C=50pt{ 1 \ar@/^/[r]^{a_1} \ar@/_/[r]_{b_1} & 2 \ar@/^/[r]^{a_2} \ar@/_/[r]_{b_2} & 3  \ar@{.}[rr] & & 2m+1\ar@/^/[r]^{a_{2m+1}} \ar@/_/[r]_{b_{2m+1}} & 2m+2 }
\]
and $I$ is generated by $a_ja_{j+1}$, $b_jb_{j+1}$, $a_jb_{j+1}-b_ja_{j+1}$, and all paths of length 3.
\end{example}

\begin{proposition}\label{prop-repli-tensor-SI}
Suppose $A, B$ are finite dimensional algebras with $B$ basic and self-injective.
Then there is an algebra isomorphism $(A\otimes_KB)^{(m)}\cong A^{(m)}\otimes_KB$.
In particular, $(A\otimes_KB)^{(m)}$ is minimal Auslander-Gorenstein if, and only if, so is $A^{(m)}$.
\end{proposition}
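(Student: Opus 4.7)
The plan is to lift the desired isomorphism to the level of the repetitive algebras, i.e.\ construct an algebra isomorphism $\Phi:\hat{A}\otimes_K B \xrightarrow{\sim} \widehat{A\otimes_K B}$ which preserves the idempotent $e^{(m)}$ (since in both algebras $e^{(m)}$ is the sum of the identities $1_A\otimes 1_B$ along the first $m+1$ diagonal positions). Restricting along $e^{(m)}$ then yields the desired $A^{(m)}\otimes B \cong (A\otimes B)^{(m)}$.

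Comparing the matrix descriptions, both algebras have $A\otimes B$ along the diagonal, but the $(i+1,i)$-subdiagonal entries are $DA\otimes B$ versus $D(A\otimes B)\cong DA\otimes DB$. Since $B$ is basic and self-injective, there is a Nakayama automorphism $\nu:B\to B$ together with a bimodule isomorphism $\theta:{}_\nu B\to DB$. I would define $\Phi$ on the $(i,i)$-diagonal by $a\otimes b\mapsto a\otimes \nu^{-i}(b)$, and on the $(i+1,i)$-subdiagonal by $f\otimes b\mapsto f\otimes \theta(\nu^{-i}(b))$. The key calculation is the multiplication of a diagonal element at row $i+1$ with a subdiagonal element $(i+1,i)$: the identification $DB\cong{}_\nu B$ introduces a factor of $\nu$ into the left action, which is exactly compensated by the mismatch between the twists $\nu^{-(i+1)}$ (on the diagonal) and $\nu^{-i}$ (on the subdiagonal). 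The subdiagonal-diagonal multiplication at row $i$ is routine, and diagonal-diagonal multiplication is immediate since each $\nu^{-i}$ is an algebra automorphism. The main obstacle is the non-symmetric Nakayama structure: when $B$ is symmetric, a naive entrywise identification suffices, but in general $D(A\otimes B)\not\cong DA\otimes B$ as bimodules, and the cascading twists by powers of $\nu^{-1}$ along successive rows are precisely what is needed to absorb this discrepancy.

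For the ``in particular'' clause, I would combine the isomorphism just obtained with standard tensor-product formulas. Every indecomposable projective $A^{(m)}\otimes B$-module has the form $P\otimes Q$ with $P\in\proj A^{(m)}$ indecomposable and $Q\in\proj B=\inj B$ indecomposable. Tensoring the minimal injective coresolution of $P$ over $A^{(m)}$ with $Q$ yields the minimal injective coresolution of $P\otimes Q$ over $A^{(m)}\otimes B$, and its $n$-th term is projective if and only if the $n$-th term for $P$ is. Consequently $\domdim_{A^{(m)}\otimes B}(P\otimes Q)=\domdim_{A^{(m)}}(P)$ and $\idim_{A^{(m)}\otimes B}(P\otimes Q)=\idim_{A^{(m)}}(P)$; taking infima over $(P,Q)$ gives $\domdim(A^{(m)}\otimes B)=\domdim A^{(m)}$ and $\idim(A^{(m)}\otimes B)=\idim A^{(m)}$, from which the claimed equivalence of the minimal Auslander-Gorenstein property is immediate.
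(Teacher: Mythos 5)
Your proposal is correct and takes essentially the same route as the paper: the paper also uses the Nakayama automorphism $\phi$ of $B$ to identify $DB\cong{}_1B_\phi$ and then twists the diagonal blocks of the triangular matrix algebra to absorb the discrepancy between $DA\otimes B$ and $D(A\otimes B)$ (it writes out only $m=1$, whereas you make the cascading powers $\nu^{-i}$ explicit, and your compatibility check is the right one). The paper leaves the ``in particular'' clause implicit; your argument via tensoring injective coresolutions with a projective-injective $Q$ is the standard way to supply it.
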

\begin{proof}
For simplicity, we take $\otimes:=\otimes_K$.
Suppose ${}_\Gamma M_\Lambda$ is a $\Gamma$-$\Lambda$-bimodule and $\phi$ is an algebra automorphism of $\Lambda$.
Then there is an algebra isomorphism 
\begin{align*}
\left(\begin{array}{cc}\phi^{-1} & 0\\ \mathrm{id}  & \mathrm{id}\end{array}\right):
\left(\begin{array}{cc}\Lambda & 0\\ M & \Gamma\end{array}\right) 
\to \left(\begin{array}{cc}\Lambda & 0 \\ {}_1M_\phi & \Gamma\end{array}\right),
\end{align*}
where ${}_1M_\phi$ denotes the $\Gamma$-$\Lambda$-bimodule whose right $\Lambda$-action is twisted by $\phi$.

Since $B$ is basic and self-injective, there is an algebra automorphism $\phi$ of $B$ so that $DB\cong {}_1B_\phi$ as $B$-bimodule.
In particular, we have $A\otimes B$-bimodules isomorphisms $D(A\otimes B)\cong DA\otimes DB \cong DA\otimes {}_1B_\phi \cong {}_1(DA\otimes B)_{\mathrm{id}\otimes\phi}$.
Now the claim for $m=1$ follows from the discussion of the first paragraph by taking $\Lambda=\Gamma=A\otimes B$.
We omit the similar proof for $m>1$.
\end{proof}

Any $A^{(m)}$-module can be described by a sequence
\[
(M_0, f_1, M_1, f_2, \ldots, f_m, M_m) ,
\]
where $M_i\in \mod A$ and $f_i\in \Hom_A(M_i\otimes_ADA,M_{i-1})$ satisfying $f_{i-1}\circ (f_i\otimes 1)=0$ for all $i$.
We denote such a module simply by $(M_i,f_i)_i$ whenever $m$ is understood.
Let $(a_i,\phi_i)_i$ be an element of $A^{(m)}$, and $(x_i)_i$ be an element of the module given by the above sequence.
Then the $A^{(m)}$-action is given by $(x_i)_i\cdot (a_i,\phi_i)_i = (x_ia_i+ f_{i+1}(x_{i+1}\otimes \phi_{i+1}))_i$.
Using this description, an $A^{(m)}$-module homomorphism can be described by a sequence $(\theta_i)_i: (M_i,f_i)_i \to (N_i,g_i)_i$ with $\theta_i\in \Hom_A(M_i,N_i)$ satisfying the natural commutation relation (c.f. \cite[III.2]{ARS}).

\begin{observation}\label{eg-modules}
Let $M$ be an $A$-module
\begin{enumerate}[(1)]
\item For $0\leq k\leq m$, $[M]_k$ is the ``stalk module" given by a row vector with $m+1$ entries, where the $k$-th entry being $M$ and the remaining zero:
\[
[M]_k := (\underbrace{\stackrel{0}{0}, \ldots, \stackrel{k-1}{0}, \stackrel{k}{M}, \stackrel{k+1}{0}, \ldots, \stackrel{m}{0}}_{m+1\text{ entries}});
\]
the action of $A^{(m)}$ is obvious.
Note that, for every $0\leq k\leq m$, $[?]_k:\mod A\to \mod A^{(m)}$ is a full exact embedding of categories which preserves indecomposable modules, almost split sequences, and irreducible morphisms (c.f. \cite[Lemma 5]{ABST}).

\item If $M$ is projective (resp. injective), for $0\leq k < m$, $[\nu_A(M),M]_k$ (resp. $[M,\nu_A^{-1}(M)]_k$) is the $A^{(m)}$-module given by $M_{k+1}=\nu_A(M)$ (resp. $M$), $f_{k+1}=\mathrm{id}$, $M_{k}=M$ (resp. $\nu_A^-(M)$), i.e.
\begin{align}
[\nu_A(M),M]_k &:= (\underbrace{\stackrel{0}{0}, \ldots, \stackrel{k-1}{0}, \stackrel{k}{\nu_A(M)}, \stackrel{k+1}{M}, \stackrel{k+2}{0},\ldots, \stackrel{m}{0}}_{m+1\text{ entries}}) \quad \text{for projective $M$;}\notag \\
[M,\nu_A^-(M)]_k &:= (\underbrace{\stackrel{0}{0}, \ldots, \stackrel{k-1}{0}, \stackrel{k}{M}, \stackrel{k+1}{\nu_A^-(M)}, \stackrel{k+2}{0}, \ldots, \stackrel{m}{0}}_{m+1\text{ entries}}) \quad \text{for injective $M$.}\notag 
\end{align}
This also shows that $[\nu_A(M),M]_k$ is projective-injective as it is a direct summand of the $(k-1)$-st row of $A^{(m)}$, as well as the dual of a direct summand of the $k$-th column of $A^{(m)}$.

\item The injective envelope $I_M$ and cosyzygy $\Omega_A^-(M)$ form a short exact sequence
\[
0\to M\xrightarrow{\iota} I_M \xrightarrow{\pi} \Omega_A^-(M)\to 0.
\]
For $0\leq k< m$, the injective envelop of $M$ is $[I_M,\nu_A^-(I_M)]_k$.
Moreover, we have a short exact sequence
\begin{align}\label{eq-cosyzygy}
0\to [M]_k\xrightarrow{[\iota,0]_k} [I_M,\nu_A^-(I_M)]_k \xrightarrow{[\pi,\mathrm{id}]_k} \Omega_{A^{(m)}}^-([M]_k)\to 0,
\end{align}
where the cosyzygy $\Omega_{A^{(m)}}^-([M]_k)$ of $[M]_k$ is given by $(M_i,f_i)_i$ with $M_{k+1}=\Omega_A^-(M)$, $f_{k+1}=\pi$, $M_{k}=\nu_A^-(I_M)$, and everything else zero.
In particular, if $M$ is injective, then $\Omega_{A^{(m)}}^-([I]_k)=[\nu_A^-(I)]_{k+1}$.
Note that $[\pi,\mathrm{id}]_k$ is the map given by the sequence $(\theta_i)_{i=0,1,\ldots, m}$ with $\theta_k=\pi, \theta_{k+1}=\mathrm{id}, \theta_i=0$ for all $i\neq k$; similarly for $[\iota,0]_k$.
We leave it to the reader to write down the analogous description of $\Omega_{A^{(m)}}([M]_k)$.
\end{enumerate}
\end{observation}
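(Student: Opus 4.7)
The plan is to verify each of the three parts by direct inspection of the sequence presentation $(M_i, f_i)_i$ of $A^{(m)}$-modules.

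For part (1), a morphism $(\theta_i)_i: [M]_k \to [N]_k$ between stalk modules must have $\theta_i = 0$ for $i \neq k$ since either source or target vanishes there, while the commutation conditions with the zero structure maps are automatic; hence $\Hom_{A^{(m)}}([M]_k, [N]_k) = \Hom_A(M, N)$, giving full faithfulness of $[?]_k$. Exactness follows since kernels and cokernels in $\mod A^{(m)}$ are computed entrywise and all components outside position $k$ vanish. Then $\End_{A^{(m)}}([M]_k) = \End_A(M)$ yields preservation of indecomposability. The claim for almost split sequences and irreducible morphisms is \cite[Lemma 5]{ABST}, to which I would refer directly.

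For part (2), write $M = eA$ for a primitive idempotent $e \in A$ and let $\tilde{e}$ denote the primitive idempotent of $A^{(m)}$ placed at position $(k+1, k+1)$ of the matrix (for $0 \leq k < m$). Reading off the $(k+1)$-st row, $\tilde{e} A^{(m)}$ has $A$-component $eA = M$ in column $k+1$ and $eDA$ in column $k$. The standard identification $eDA \cong D(Ae) \cong \nu_A(eA) = \nu_A(M)$ of right $A$-modules gives $\tilde{e} A^{(m)} \cong [\nu_A(M), M]_k$, exhibiting it as a direct summand of $A^{(m)}$, hence projective. By a dual argument on the $(k+1)$-st column of the matrix (viewed as a left $A^{(m)}$-module and then $K$-dualised), $[\nu_A(M), M]_k$ is also a direct summand of the injective cogenerator $DA^{(m)}$, and hence injective. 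Additivity extends the statement to arbitrary projective $M$; the case of injective $M$ and $[M, \nu_A^-(M)]_k$ is dual.

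For part (3), set $P := \nu_A^-(I_M)$, which is projective (as $I_M \in \inj A$) and satisfies $\nu_A(P) = I_M$. By part (2), $[I_M, \nu_A^-(I_M)]_k = [\nu_A(P), P]_k$ is projective-injective. The inclusion $\iota: M \hookrightarrow I_M$ induces the monomorphism $[\iota, 0]_k: [M]_k \hookrightarrow [I_M, \nu_A^-(I_M)]_k$; to see this is the injective envelope I would verify essentiality by computing the socle. A simple submodule $[S]_j$ can a priori occur only for $j \in \{k, k+1\}$; for $j = k+1$, the commutation condition forces the composition $f_{k+1} \circ (\iota_S \otimes 1_{DA}): S \otimes_A DA \to I_M$ to vanish, and the tensor-hom adjunction $\Hom_A(- \otimes_A DA, I_M) \cong \Hom_A(-, \nu_A^-(I_M))$ identifies this composition bijectively with $\iota_S$ itself, whence $\iota_S = 0$, ruling out simple submodules at $k+1$. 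Thus $\soc[I_M, \nu_A^-(I_M)]_k = [\soc I_M]_k = [\soc M]_k \subseteq [M]_k$, confirming essentiality. The cokernel of $[\iota, 0]_k$ is then computed entrywise: position $k$ becomes $I_M/M = \Omega_A^-(M)$, position $k+1$ stays $\nu_A^-(I_M)$, and the structure map $f_{k+1}$ descends to the canonical projection $\pi: I_M \to \Omega_A^-(M)$, yielding the stated short exact sequence and identifying $\Omega_{A^{(m)}}^-([M]_k)$; the specialisation to injective $M$ follows from $\Omega_A^-(I) = 0$.

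The main obstacle is the socle argument in part (3): using only the vanishing of $S \otimes_A DA$ is insufficient in the non-self-injective setting (where $S \otimes_A DA$ can vanish for reasons unrelated to $S$ being zero), but the tensor-hom adjunction converts the compatibility condition into a direct identification with $\iota_S$, cleanly forcing $\iota_S = 0$.
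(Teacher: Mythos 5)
Your verification is correct and follows the same direct route the paper leaves implicit: everything is read off from the description of $A^{(m)}$-modules as tuples $(M_i,f_i)_i$ over the triangular matrix algebra, with the one genuinely delicate point --- essentiality of $[M]_k\subseteq [I_M,\nu_A^-(I_M)]_k$ --- handled correctly by your adjunction argument showing the socle has no component in position $k+1$. Note only that your entrywise cokernel computation (correctly) places $\Omega_A^-(M)$ in position $k$ and $\nu_A^-(I_M)$ in position $k+1$, whereas the paper's inline description swaps these two indices (a typo, as its own consequence $\Omega_{A^{(m)}}^-([I]_k)=[\nu_A^-(I)]_{k+1}$ and the description of $[\pi,\mathrm{id}]_k$ confirm).
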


In addition to (2), we remark that
\begin{align}
\proj A^{(m)} &= \add\{ [A]_0, [DA,A]_i \mid i=0,1,\ldots,m-1 \}; \notag \\
\inj A^{(m)} &= \add\{ [DA]_m, [DA,A]_i \mid i=0,1,\ldots,m-1 \}. \notag 
\end{align}

The following is well-known.
\begin{lemma}{\rm \cite{AssIwa}}\label{lem-gldim}
There is an inequality $m+\gldim A \leq \gldim A^{(m)} \leq (m+1)\gldim A+m.$
\end{lemma}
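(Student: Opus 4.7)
The plan is to prove the two bounds separately, using the explicit description of indecomposable projective $A^{(m)}$-modules (namely $[A]_0$ and $[DA,A]_k$ for $0 \le k \le m-1$) recorded just after Observation~\ref{eg-modules}. For the upper bound, I will induct on $m$, aiming to establish the recursion $\gldim A^{(m)} \le \gldim A^{(m-1)} + \gldim A + 1$, which unwinds to the stated bound. Given any $A^{(m)}$-module $M = (M_0, f_1, \ldots, f_m, M_m)$, I would use the canonical short exact sequence
\[
0 \to [M_0]_0 \to M \to \bar{M} \to 0,
\]
where $\bar{M} = (0, 0, M_1, f_2, \ldots, f_m, M_m)$; this is well-defined because position $0$ has no incoming $f$-map, so $[M_0]_0$ is a genuine submodule. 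Since $[?]_0$ sends $\proj A$ into $\proj A^{(m)}$, we have $\pdim_{A^{(m)}}[M_0]_0 \le \gldim A$. The quotient $\bar{M}$, after shifting positions $1,\ldots,m$ down to $0,\ldots,m-1$, is naturally an $A^{(m-1)}$-module; lifting an $A^{(m-1)}$-projective resolution back up produces an exact sequence over $A^{(m)}$ with terms in $\add\bigl(\{[A]_1\}\cup\{[DA,A]_k : 1 \le k \le m-1\}\bigr)$. All but $[A]_1$ are $A^{(m)}$-projective, and I would show $\pdim_{A^{(m)}}[A]_1 \le 1 + \gldim A$ via its projective cover $[DA,A]_0 \twoheadrightarrow [A]_1$ with kernel $[DA]_0 = [?]_0(DA)$. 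A horseshoe-lemma iteration then gives $\pdim_{A^{(m)}}\bar{M} \le \gldim A^{(m-1)} + \gldim A + 1$, and the short exact sequence yields the recursion.

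For the lower bound, I would fix a simple $A$-module $S$ with $\pdim_A S = \gldim A$ and show $\pdim_{A^{(m)}}[S]_m \ge m + \gldim A$ via an Ext-nonvanishing argument: establishing $\Ext^{m+\gldim A}_{A^{(m)}}([S]_m, [S']_0) \neq 0$ for some simple $A$-module $S'$ with $\Ext^{\gldim A}_A(S, S') \neq 0$. To compute this Ext, the plan is to build an injective coresolution of $[S']_0$ by iterated application of Observation~\ref{eg-modules}(3): the first $m$ cosyzygies spread the support progressively from position $0$ to position $m$ (each step contributing $\nu^-$ at the current rightmost occupied position), and from then on the coresolution proceeds as $[?]_m$ applied to an $A$-level injective coresolution. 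Pairing this with $[S]_m$ via $\Hom_{A^{(m)}}([S]_m,-)$ and noting that projective-injective summands $[DA,A]_k$ contribute trivially in the appropriate degrees, the computation reduces to $\Ext^{\gldim A}_A(S, \widetilde{S}')$ for some $\widetilde{S}'$ obtained from $S'$ by repeated applications of $\nu^-$, which is non-zero by the choice of $S'$.

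The main obstacle is the bookkeeping for the lower bound: carefully tracking how iterated $\nu^-$ (which does not preserve injectivity) transforms $S'$ during the first $m$ cosyzygies, and ensuring the total degree contribution from the support-spreading phase is exactly $m$. Once the support reaches position $m$, the remaining computation reduces cleanly to an $A$-level injective coresolution via the $[?]_m$ embedding, and the final Ext count yields exactly $m + \gldim A$.
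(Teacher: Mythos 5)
The paper offers no proof of this lemma (it is quoted from \cite{AssIwa}), so your argument can only be judged on its own terms. Your upper bound is correct: $[M_0]_0$ is indeed a submodule of $M$ with quotient $\bar M$ killed by $e_0$, so $\bar M$ is a module over $A^{(m)}/A^{(m)}e_0A^{(m)}\cong A^{(m-1)}$; the exact embedding $[?]_0$ gives $\pdim_{A^{(m)}}[M_0]_0\le\gldim A$; the sequence $0\to[DA]_0\to[DA,A]_0\to[A]_1\to 0$ gives $\pdim_{A^{(m)}}[A]_1\le 1+\gldim A$; and the dimension-shifting step then yields the recursion $\gldim A^{(m)}\le\gldim A^{(m-1)}+\gldim A+1$, which unwinds to $(m+1)\gldim A+m$.

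The lower bound, however, has a genuine gap. It rests on the claim that the minimal injective coresolution of $[S']_0$ reaches position $m$ after exactly $m$ cosyzygies, one $\nu^-$ per step, so that $\Ext^{m+\gldim A}_{A^{(m)}}([S]_m,[S']_0)\neq 0$. Neither half of this is true. By Observation~\ref{eg-modules}(3) the cosyzygy of $[M]_k$ is supported in positions $k$ \emph{and} $k+1$ (it contains $\nu_A^-(I_M)$ at position $k$) and is a stalk at position $k+1$ only when $M$ is injective; for a general base algebra the higher cosyzygies never return to stalk form, and even in the hereditary case Lemma~\ref{lem-injcopres} shows that each hop from position $i$ to $i+1$ costs \emph{two} coresolution steps, not one. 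This kills the proposed witness: take $A$ the Kronecker algebra ($\gldim A=1$) and $m=2$. The only simples with $\Ext^1_A(S,S')\neq 0$ are $S=S_1$ and $S'=S_2$ (the projective simple), and splicing the sequences of the form (\ref{eq-seq2}) shows that the terms $I^0,\dots,I^3$ of the minimal injective coresolution of $[S_2]_0$ are projective-injective with socles concentrated in positions $0,0,1,1$ (equivalently $\domdim_{A^{(2)}}[S_2]_0=4$ by Proposition~\ref{prop-dim}(2)). A nonzero map from the simple module $[S]_2$ must land in a socle concentrated at position $2$, so $\Hom_{A^{(2)}}([S]_2,I^k)=0$ for $k\le 3$ and hence $\Ext^{3}_{A^{(2)}}([S]_2,[S_2]_0)=0$, contradicting your claimed non-vanishing in degree $m+\gldim A=3$. (Your final step also asserts $\Ext^{\gldim A}_A(S,\widetilde S')\neq 0$ for $\widetilde S'$ a $\nu^-$-translate of $S'$, which the choice of $S'$ does not guarantee.) The lower bound therefore needs a different witness, or a different mechanism altogether.
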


\section{Hereditary base algebra}\label{sec-Hered-base}
In subsection \ref{subsec-hered-dim}, we calculate the homological dimensions of the replicated algebras of hereditary algebras, and use them to determine when higher Auslander algebras appear.
Then we will give some other simple observations on these replicated algebras in subsection \ref{subsec-hered-further}.

Throughout this section, we assume $A$ is a basic, ring-indecomposable, hereditary, non-simple algebra.
This means that there is a finite acyclic valued quiver $Q$ associated to $A$ which is not an isolated vertex.
For detailed reference, including the classification of representation-finite hereditary algebras, we refer the reader to \cite[Chapter VIII]{ARS}.

Before we begin our exposition, we specifically remind the reader that $\nu_A^-(M)=\Hom_A(DA,M)=0$ for all indecomposable non-injective $A$-modules $M$.
Moreover, there is an automorphism $\nu_1:=\RHom_A(DA,?[1])$ on the bounded derived category $\Db(\mod A)$.
It is well-known that
\begin{align}\label{vanish}
\nu_1^{-k}(M)\in (\mod A)[s_M(k)]
\end{align}
for some $s_M(k)\in \Z_{\geq 0}$, for each indecomposable $A$-module $M$.
We denote by $M^{\succ k}$ the $A$-module $\nu_1^{-k}(M)[-s_M(k)]$.
Note that $\nu_1^{-1}(M)=\tau_A^-(M)$ for any non-injective $A$-module $M$.

\subsection{Homological dimensions}\label{subsec-hered-dim}
By Lemma \ref{lem-gldim} and the assumption that $\gldim A\leq 1$, we have
\[
\gldim  A^{(m)} = \idim A^{(m)} < \infty \;\;\text{ for all }m\geq 1.
\]

\begin{lemma}\label{lem-injcopres}
Let $M$ be a non-injective $A$-module with minimal injective coresolution $0\to I^0 \xrightarrow{f} I^1 \to 0$.
For $0\leq i< m$, we have an exact sequence of $A^{(m)}$-modules:
\begin{align}\label{eq-inj-pres}
0 \to [M]_i \to [I^0,\nu_A^-(I^0)]_i \xrightarrow{[f,\nu_A^-(f)]_i} [I^1,\nu_A^-(I^1)]_i\to [\tau_A^-(M)]_{i+1} \to 0,
\end{align}
where the middle two terms are the minimal injective copresentation of $[M]_i$ in $\mod A^{(m)}$.
\end{lemma}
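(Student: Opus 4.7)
My plan is to verify exactness of \eqref{eq-inj-pres} at each matrix position $j$ separately, and then to establish minimality by recognising the middle two terms as injective envelopes.

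At position $i$, the restriction of \eqref{eq-inj-pres} reads $0 \to M \to I^0 \xrightarrow{f} I^1 \to 0 \to 0$, which is exact because the first three terms form the minimal injective coresolution of $M$ in $\mod A$. At position $i+1$, applying the left exact functor $\nu_A^- = \Hom_A(DA,-)$ to that coresolution, and invoking the hereditary facts $\nu_A^-(M)=0$ (valid when $M$ has no injective summand, as recalled just before this subsection) together with $\Ext_A^1(DA,I^0)=0$, yields the exact sequence $0 \to \nu_A^-(I^0) \xrightarrow{\nu_A^-(f)} \nu_A^-(I^1) \to \Ext_A^1(DA,M) \to 0$, whose right-hand term is $\tau_A^-(M)$ by the standard hereditary formula. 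At the remaining positions the sequence is trivially zero, and compatibility of structure maps across adjacent positions follows from the adjunction $\nu_A^-(I^j)\otimes_A DA \cong I^j$ for $j=0,1$.

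For the minimality assertion, Observation \ref{eg-modules}(3) already provides $[M]_i \hookrightarrow [I^0,\nu_A^-(I^0)]_i$ as the injective envelope, together with an explicit description of the cosyzygy $C := \Omega_{A^{(m)}}^-([M]_i)$ supported at positions $i,i+1$ with $C_i = \Omega_A^-(M) = I^1$ and $C_{i+1} = \nu_A^-(I^0)$. To see that $C \hookrightarrow [I^1,\nu_A^-(I^1)]_i$ is again an injective envelope, I would compute $\soc C$ position-by-position: the contribution at $i$ is $\soc_A I^1$ since $C_{i-1}=0$, while at $i+1$ the contribution vanishes because the adjoint of the structure map of $C$ is precisely $\nu_A^-(f)$, which is injective by the previous paragraph. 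Hence $\soc C = [\soc_A I^1]_i$, whose injective envelope in $\mod A^{(m)}$ is $[I^1,\nu_A^-(I^1)]_i$ by Observation \ref{eg-modules}(2).

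The main bookkeeping obstacle is in matching the composition $[I^0,\nu_A^-(I^0)]_i \twoheadrightarrow C \hookrightarrow [I^1,\nu_A^-(I^1)]_i$ with the prescribed map $[f,\nu_A^-(f)]_i$. The first surjection is $[\pi,\mathrm{id}]_i$ by Observation \ref{eg-modules}(3) with $\pi = f$, while the inclusion forced by the socle computation must be $[\mathrm{id},\nu_A^-(f)]_i$, so the composition is $[f,\nu_A^-(f)]_i$ after carefully tracking the adjunction identifications $\nu_A^-(I^j)\otimes_A DA \cong I^j$ at each position.
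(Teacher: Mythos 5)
Your proof is correct and follows essentially the same route as the paper's: both identify the first two terms via Observation \ref{eg-modules}(3), factor the middle map through the cosyzygy as $[\mathrm{id},\nu_A^-(f)]_i\circ[\pi,\mathrm{id}]_i$, use $\ker\nu_A^-(f)=\nu_A^-(M)=0$ for exactness, and identify the cokernel with $[\tau_A^-(M)]_{i+1}$ via the standard copresentation formula for $\tau_A^-$. Your explicit socle computation of the cosyzygy to justify minimality of the second injective term is a welcome elaboration of a step the paper leaves implicit (note only that the injective envelope statement you invoke there is Observation \ref{eg-modules}(3) rather than (2)).
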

Note that the middle two terms are in $\add([DA,A]_i)$; hence, they are projective-injective.
\begin{proof}
Using Observation \ref{eg-modules}, we obtain the first two terms of (\ref{eq-inj-pres}).
To obtain the third term, observe that $[f,\nu_A^-(f)]_i$ factors through $\Omega_{A^{(m)}}^-([M]_i)$ as follows:
\[
[I^0,\nu_A^-(I^0)]_i \xrightarrow{[\pi,\mathrm{id}]_i} \Omega_{A^{(m)}}^-([M]_i) \xrightarrow{[\mathrm{id},\nu_A^-(f)]_i} [I^1, \nu_A^-(I^1)]_i,
\]
where $\pi$ is the canonical projection from $I^0$ to $\Omega_A^-(M)$.
As $\ker(\nu_A^-(f))=\nu_A^-(M)=0$, the sequence is exact at $[I^0,\nu_A^-(I)]_i$.
Since $\mathrm{Coker}(f)=0$, we have $\mathrm{Coker}[f,\nu_A^-(f)]_i=[\mathrm{Coker}(\nu_A^-(f))]_{i+1}=[\tau_A^-(M)]_{i+1}$.
\end{proof}

Recall that for any indecomposable $A$-module $M$, we have
\begin{align}
\nu_1^-(M) \cong \begin{cases}
\tau_A^-(M), & \text{if $M$ is non-injective;}\\
\nu_A^-(M)[1], & \text{if $M$ is injective.}
\end{cases}\label{eq-nuD}
\end{align}
Using this and the fact that $\Omega_{A^{(m)}}^-([I]_i)=[\nu_A^-(I)]_{i+1}$ for any injective $A$-module $I$ yields the following list of exact sequences.
Let $M$ be an indecomposable projective $A$-module.
\begin{enumerate}[(i)]
\item If $M^{\succ k}$ is injective and $0\leq i< m$, then we have
\begin{align}  
\xi(M,k,i):\qquad 0 \to [M^{\succ k}]_i \to I \to [M^{\succ k}]_{i+1} \to 0, \label{eq-seq1}
\end{align}
where $I$ is a projective-injective $A^{(m)}$-module.

\item If $M^{\succ k}$ is non-injective and $0\leq i< m$, then we have
\begin{align}
\xi(M,k,i):\qquad 0 \to [M^{\succ k}]_i \to I \to I' \to [M^{\succ k+1}]_{i+1} \to 0,\label{eq-seq2}
\end{align}
where $I,I'$ are projective-injective $A^{(m)}$-modules.

\item If $i=m$, then we have
\begin{align}
\xi(M,k,m):\qquad 0 \to [M^{\succ k}]_m \to [I]_m \to [I']_m \to  0,\label{eq-seq3}
\end{align}
where $I$ is a non-zero injective $A$-module, and so is $I'$ when $M^{\succ k}$ is non-injective; otherwise, $I'=0$.
\end{enumerate}

For any indecomposable $A$-module $M$ and any $0\leq i\leq m$, the minimal injective coresolution of the $A^{(m)}$-module $[M]_i$ can then be obtained by iteratively taking the Yoneda products of 
\[
\xi(M,0,i), \xi(M,1,i+1), \cdots, \xi(M,m-1-i,m-1), \xi(M,m-i,m).
\]

\begin{proposition}\label{prop-dim}
Let $M$ be an indecomposable $A$-module and $i$ be an integer with $0\leq i\leq m$.
Then the following hold.
\begin{enumerate}
\item $\idim_{A^{(m)}}[M]_i-\domdim_{A^{(m)}}[M]_i = \idim_A M^{\succ m-i}$.
\item $\domdim_{A^{(m)}}[M]_i= 2(m-i)-s_M(m-i)$.
\item $\idim_{A^{(m)}}[M]_i = 2(m-i)-s_M(m-i+1)$
\end{enumerate}
\end{proposition}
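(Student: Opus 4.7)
The plan is to construct the minimal injective coresolution of $[M]_i$ explicitly, by Yoneda-splicing the exact sequences $\xi(M,k,i+k)$ of (\ref{eq-seq1})--(\ref{eq-seq3}) for $k=0,1,\ldots,m-i$, and then to tally the contributions to read off the three formulas.

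First I would verify that the Yoneda product
\[
\xi(M,0,i)\cdot\xi(M,1,i+1)\cdots\xi(M,m-i,m)
\]
is the minimal injective coresolution of $[M]_i$ as an $A^{(m)}$-module. The splicing is well-defined because the right endpoint $[M^{\succ k+1}]_{i+k+1}$ of $\xi(M,k,i+k)$ coincides with the left endpoint of $\xi(M,k+1,i+k+1)$; minimality propagates because each $\xi(M,k,i+k)$ already exhibits a minimal injective copresentation of its left endpoint $[M^{\succ k}]_{i+k}$---this is Lemma \ref{lem-injcopres} applied to $M^{\succ k}$ when $k<m-i$, and Observation \ref{eg-modules}(3) together with the standard two-term injective copresentation in $\mod A$ when $k=m-i$ (the latter degenerating when $M^{\succ m-i}$ is injective).

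Second I would tally contributions. For each $k<m-i$, every middle term of $\xi(M,k,i+k)$ lies in $\add[DA,A]_{i+k}$ and is hence projective-injective: case (i) contributes one such term and case (ii) contributes two. Since $s_M$ jumps by one precisely at injective steps, exactly $s_M(m-i)$ of the indices $k\in\{0,\ldots,m-i-1\}$ fall under case (i), so the total number of projective-injective middle terms is
\[
s_M(m-i)+2\bigl(m-i-s_M(m-i)\bigr)=2(m-i)-s_M(m-i).
\]
By contrast, all middle terms produced by $\xi(M,m-i,m)$ lie in $\add[DA]_m$ and are injective but not projective. As the projective-injective terms thus sit precisely at the start of the coresolution, this count equals $\domdim_{A^{(m)}}[M]_i$, proving (2). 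Moreover, the terminal $\xi(M,m-i,m)$ contributes a non-projective tail whose length is $\idim_A M^{\succ m-i}\in\{0,1\}$ (zero or one according as $M^{\succ m-i}$ is injective or not, by hereditariness of $A$), immediately yielding (1); combining this with (2) and the identity $s_M(m-i+1)-s_M(m-i)=1-\idim_A M^{\succ m-i}$ then gives (3).

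The only nontrivial subtlety is that Yoneda-splicing minimal injective copresentations produces a globally minimal coresolution of the composite, which reduces to the observation that each $\xi(M,k,i+k)$ begins with the injective envelope of its left endpoint; the remaining work is careful indexing over the three cases (i), (ii), (iii).
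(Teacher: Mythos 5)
Your strategy is exactly the paper's: splice the sequences $\xi(M,k,i+k)$ for $k=0,\ldots,m-i$ into the minimal injective coresolution of $[M]_i$, observe that all middle terms with $k<m-i$ lie in $\add([DA,A]_{i+k})$ and are projective-injective while the middle terms of $\xi(M,m-i,m)$ lie in $\add([DA]_m)$ and are injective non-projective, and then count. Your count for (2) is correct: exactly $s_M(m-i)$ of the indices $k\in\{0,\ldots,m-i-1\}$ are ``injective steps'' contributing one middle term each, the rest contribute two, giving $\domdim_{A^{(m)}}[M]_i=2(m-i)-s_M(m-i)$.

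The bookkeeping at the end, however, is off by one in two places. First, the non-projective tail coming from $\xi(M,m-i,m)$ consists of $[I]_m$ together with $[I']_m$ when $M^{\succ m-i}$ is non-injective, i.e.\ it has $1+\idim_A M^{\succ m-i}$ terms, not $\idim_A M^{\succ m-i}$; with your count one would get $\idim_{A^{(m)}}[M]_i-\domdim_{A^{(m)}}[M]_i=\idim_A M^{\succ m-i}-1$, contradicting (1). (With the correct tail length, $\idim_{A^{(m)}}[M]_i$ is the total number of terms minus one and (1) does follow.) Second, and more seriously, combining the correct statements (1) and (2) with the identity $s_M(m-i+1)-s_M(m-i)=1-\idim_AM^{\succ m-i}$ yields
\[
\idim_{A^{(m)}}[M]_i=2(m-i)+1-s_M(m-i+1),
\]
which differs from the printed formula (3) by one. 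Indeed (3) as printed is inconsistent with (1) and (2), and also with Corollary \ref{cor-RI}: for $P$ indecomposable projective over a representation-infinite hereditary algebra one has $s_P(k)=0$ for all $k$, so (3) would give $\idim_{A^{(m)}}[P]_0=2m$ rather than $2m+1$. This appears to be a typo in the paper, whose subsequent arguments only ever use (1) and (2). So your assertion that the combination ``gives (3)'' does not survive the arithmetic; had you carried it out you would have found that the formula you actually prove is $2(m-i)+1-s_M(m-i+1)$, and that the statement as printed cannot be proved because it is false.
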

\begin{proof}
From the preceding discussion, we see that every term in the minimal injective coresolution of $[M]_i$ is projective-injective apart from those given by $\xi(M,m-i,m)$.

Therefore, these values follow immediately from the description (\ref{eq-seq3}) of $\xi(M,m-i,m)$ and the fact that $[I]_m$ is a non-projective $A^{(m)}$-module for any injective non-projective $A$-module $I$.
\end{proof}

From (1) of the proposition, it makes sense to define
\[
\epsilon_M(k):=\begin{cases}
0, & \text{if $M^{\succ k}$ is injective;}\\
1, & \text{otherwise.}
\end{cases}
\]
Then we can write down the dominant and global dimension of $A$ in two simple formulae.

\begin{theorem}\label{thm-dims}
Let $A$ be a non-simple hereditary algebra, then we have
\begin{align}
\domdim A^{(m)} & = 2m-\max\{s_P(m)\mid P\mbox{ an indecomposable projective $A$-module}\} \notag \\
 \text{ and }\quad \gldim A^{(m)} &= 2m-\min\{s_P(m)-\epsilon_P(m)\mid P\mbox{ an indecomposable projective $A$-module}\}\notag 
\end{align}
for all $m\geq 1$.
\end{theorem}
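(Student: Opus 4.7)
The strategy is to apply Proposition \ref{prop-dim} to each indecomposable projective summand of $A^{(m)}$ and then take the appropriate minimum or maximum. By the description of $\proj A^{(m)}$ given immediately after Observation \ref{eg-modules}, the indecomposable projective $A^{(m)}$-modules split into two classes: the projective-injective summands $[\nu_A P, P]_i$ (for $P$ an indecomposable projective $A$-module and $0 \le i \le m-1$), which have dominant dimension $\infty$ and injective dimension $0$; and the stalk summands $[P]_0$ for $P$ an indecomposable projective $A$-module, which are non-injective as soon as $m \ge 1$.

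For $\domdim A^{(m)}$, which is by definition the minimum of $\domdim Q$ over indecomposable projective summands $Q$, the projective-injectives do not contribute to the minimum. Applying Proposition \ref{prop-dim}(2) with $M=P$ and $i=0$ gives $\domdim[P]_0 = 2m - s_P(m)$, whence taking the minimum over indecomposable projective $A$-modules $P$ yields the claimed formula.

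For $\gldim A^{(m)}$, Lemma \ref{lem-gldim} guarantees finiteness, hence $\gldim A^{(m)} = \idim A^{(m)}$ (a standard fact for Artin algebras of finite global dimension), and the latter equals the maximum of $\idim Q$ over indecomposable projective summands. The projective-injectives contribute $0$, so for $m \ge 1$ the maximum is attained among the $[P]_0$'s. Applying Proposition \ref{prop-dim}(3) with $i=0$, together with the recursion
\[
s_M(k+1) = s_M(k) + 1 - \epsilon_M(k),
\]
which follows at once from the case analysis in \eqref{eq-nuD} (since $\nu_1^{-1}$ fixes the degree on non-injectives and shifts up by one on injectives), one rewrites $\idim[P]_0$ in the form $2m - (s_P(m) - \epsilon_P(m))$. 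Taking the maximum over $P$ yields the claimed formula.

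The only non-mechanical step is the recursion-based rewriting above, which translates the expression produced by Proposition \ref{prop-dim}(3) into the form required by the theorem; everything else follows directly from Proposition \ref{prop-dim} and the structural description of the indecomposable projective summands of $A^{(m)}$.
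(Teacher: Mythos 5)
Your strategy coincides with the paper's: restrict attention to the non-injective indecomposable projectives $[P]_0$, apply Proposition \ref{prop-dim} at $i=0$, take the appropriate minimum and maximum, and use Lemma \ref{lem-gldim} to identify $\gldim A^{(m)}$ with $\idim A^{(m)}$. The dominant-dimension half of your argument is correct as written.

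The injective-dimension half, however, does not close arithmetically. Your recursion $s_M(k+1)=s_M(k)+1-\epsilon_M(k)$ is correct, but feeding it into Proposition \ref{prop-dim}(3) exactly as printed gives
\[
\idim_{A^{(m)}}[P]_0=2m-s_P(m+1)=2m-\bigl(s_P(m)-\epsilon_P(m)\bigr)-1,
\]
which is one less than the expression you assert and one less than the theorem requires. The culprit is not your recursion but an off-by-one misprint in Proposition \ref{prop-dim}(3): counting the injective terms of the coresolution (each $\xi(M,k,i+k)$ with $0\le k\le m-i$ contributes $1+\epsilon_M(k)$ of them) yields $\idim_{A^{(m)}}[M]_i=2(m-i)+1-s_M(m-i+1)$. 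This corrected constant also agrees with Proposition \ref{prop-dim2} under the translation $s_x^-(k)=s_{P_x}(k)-k$, and with the test case $A=K(1\to 2)$, $m=1$, where $A^{(1)}\cong T_{4,3}$ has global dimension $2$ (Proposition \ref{prop-naka-1}) while part (3) as printed would predict $1$. Since you take part (3) at face value, your rewriting silently absorbs this $-1$ and is therefore not a valid deduction from the stated proposition. The clean fix --- and what the paper's proof in effect does --- is to bypass part (3) entirely: by parts (1) and (2), $\idim_{A^{(m)}}[P]_0=\domdim_{A^{(m)}}[P]_0+\idim_A P^{\succ m}=2m-s_P(m)+\epsilon_P(m)$, since for hereditary $A$ the module $P^{\succ m}$ has injective dimension $0$ or $1$ according to whether it is injective, i.e. $\idim_A P^{\succ m}=\epsilon_P(m)$. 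With that substitution your argument is complete and coincides with the paper's.
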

\begin{proof}
Every indecomposable projective non-injective $A^{(m)}$-module is of the form $[P]_0$, where $P$ is an indecomposable projective $A$-module.
By Proposition \ref{prop-dim}, we have
\begin{align*}
\domdim_{A^{(m)}}[P]_0 = 2m-s_M(m) \quad \text{ and }\quad \idim_{A^{(m)}}[P]_0 = 2m-(s_M(m)-\epsilon_P(m)).
\end{align*}
By definition, the dominant and injective dimensions of $A^{(m)}$ are the minimum and maximum of these respectively.
Since $\gldim A^{(m)}$ is finite by Lemma \ref{lem-gldim}, it is equal to $\idim_{A^{(m)}} A^{(m)}$.
\end{proof}
Theorem \ref{thm-dims} implies for $m\geq 1$ that $\domdim A^{(m)} \geq m+1$ when $A$ is non-uniserial or $m\geq 2$.
This is because we have $s_P(m)<m$ in these situations.
On the other hand, the dominant dimension is usually much larger than this bound, which we will explain in the following.

\begin{corollary}\label{cor-RI}
If $A$ is representation-infinite hereditary algebra, then we have, for all $m\geq 1$,
\[
\domdim A^{(m)} = 2m  \quad \text{ and }\quad \gldim A^{(m)}=\idim A^{(m)}= 2m+1.
\]
In particular, $A^{(m)}$ is neither minimal Auslander-Gorenstein nor higher Auslander for all $m\geq 1$.
\end{corollary}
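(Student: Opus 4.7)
The plan is to apply Theorem \ref{thm-dims} directly, once we understand how the $\nu_1^{-k}$-orbit of an indecomposable projective module behaves when $A$ is representation-infinite hereditary. Concretely, we need to compute the two quantities $s_P(m)$ and $\epsilon_P(m)$ appearing in the formulae of Theorem \ref{thm-dims} for every indecomposable projective $A$-module $P$.

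The key input is the structure of the Auslander-Reiten quiver of a representation-infinite hereditary algebra. For such $A$, the preprojective component $\mathcal{P}$ contains every indecomposable projective module, is closed under $\tau_A^-$, and contains no indecomposable injective module. (This is a classical fact, see e.g.\ \cite[VIII]{ARS}.) Combined with (\ref{eq-nuD}), this will give $\nu_1^{-k}(P) = \tau_A^{-k}(P)$ in $\mod A$ for every $k\ge 0$ and every indecomposable projective $P$, by an easy induction on $k$: as long as every predecessor $\tau_A^{-j}(P)$ is non-injective, each application of $\nu_1^{-1}$ contributes no shift and simply equals $\tau_A^-$. Hence for all $k \ge 0$ and all indecomposable projective $P$:
\[
s_P(k)=0, \qquad P^{\succ k}=\tau_A^{-k}(P) \text{ is non-injective},\qquad \epsilon_P(k)=1.
\]

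Plugging these into Theorem \ref{thm-dims} yields
\[
\domdim A^{(m)} = 2m - \max_P s_P(m) = 2m, \qquad \gldim A^{(m)} = 2m - \min_P(s_P(m)-\epsilon_P(m)) = 2m+1.
\]
Since $\gldim A^{(m)}$ is finite by Lemma \ref{lem-gldim}, we have $\idim A^{(m)} = \gldim A^{(m)} = 2m+1$. Finally, $\idim A^{(m)} = 2m+1 \neq 2m = \domdim A^{(m)}$, so $A^{(m)}$ is neither minimal Auslander-Gorenstein nor higher Auslander, proving the last assertion.

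There is no real obstacle here: the only nontrivial ingredient is the standard description of the preprojective component of a representation-infinite hereditary algebra (in particular, that it avoids all injectives), and everything else is bookkeeping inside the formulae from Theorem \ref{thm-dims}.
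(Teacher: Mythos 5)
Your proposal is correct and follows the paper's own argument: the paper likewise observes that for a (connected) representation-infinite hereditary algebra $\nu_1^{-k}(P)\cong\tau_A^{-k}(P)$ stays a non-injective module for all $k\geq 0$, so $s_P(k)=0$ and $\epsilon_P(k)=1$, and then reads off both dimensions from Theorem \ref{thm-dims}. The only difference is that you spell out the justification (the preprojective component contains all projectives, is $\tau_A^-$-closed, and avoids the injectives), which the paper leaves implicit.
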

\begin{proof}
When $A$ is representation-infinite, then we have $\nu_1^{-k}(P)\cong \tau_A^{-k}(P)$ (i.e. $s_M(k)=0$) for all $k\geq 0$.
The claim follows from Theorem \ref{thm-dims}.
\end{proof}
Part of this result, namely $\gldim (A^{(m)})=2m+1$, was also shown in \cite[Lem 4.3]{LvZh}.

In the following, we assume $A$ is representation-finite hereditary, and determine the dominant and global dimension of $A^{(m)}$.

We fix the enumeration of vertices of Dynkin graphs which have non-trivial automorphisms as follows:
\begin{align}
\DynA_n: & \xymatrix{ 1 \ar@{-}[r] & 2 \ar@{-}[r] & \cdots \ar@{-}[r]& n-1 \ar@{-}[r] & n} \notag \\
\DynD_n: & \xymatrix@R=10pt{ 1 \ar@{-}[r] & 2 \ar@{-}[r] & \cdots \ar@{-}[r]& n-2\ar@{-}[r]\ar@{-}[d] & n-1 \\  & & &n& } \notag \\
\DynE_6: & \xymatrix@R=10pt{ 1 \ar@{-}[r] & 2 \ar@{-}[r] & 3 \ar@{-}[d]\ar@{-}[r] & 4\ar@{-}[r] & 5 & \\  & & 6&&& } \notag 
\end{align}

For a Dynkin graph $\Delta \in \{\DynA_n, \mathbb{B}_n, \mathbb{C}_n,\DynD_n, \DynE_6, \DynE_7, \DynE_8, \mathbb{F}_4, \mathbb{G}_2\}$, we denote by $h_\Delta$ the Coxeter number of type $\Delta$, which are given in the following table.
Denote by $\nu_\Delta$ a certain involutive graph automorphism of $\Delta$; we describe the non-identity ones by swapping of vertices as follows.
\[
\begin{array}{c||c|c|c|c|c|c|c|c}
\Delta     & \DynA_n & \mathbb{B}_n,\mathbb{C}_n & \DynD_n & \DynE_6 & \DynE_7 & \DynE_8 & \mathbb{F}_4 & \mathbb{G}_2 \\ \hline
h_\Delta   & n+1     & 2n                        & 2n-2    & 12      & 18      & 30      & 12 & 6 \\
\nu_\Delta & i \leftrightarrow n+1-i & \text{identity}&  n-1 \leftrightarrow n \text{, if $n$ odd} & 1\leftrightarrow 5 & \text{identity} & \text{identity}& \text{identity}& \text{identity}\\
 &  & &  \text{identity, if $n$ even} & 2\leftrightarrow 4 & & & & 
\end{array}
\]

Since $A$ is representation-finite, its associated valued quiver $Q$ is of Dynkin type $\Delta$.
For a vertex $i\in \Delta_0$, we fix $e_i$ to be the primitive idempotent of the path algebra of type $\Delta$.
Denote by $P_i:=e_iA$ (resp. $I_i:=D(Ae_i)$) the corresponding indecomposable projective (resp. injective) module.
We define $\ell_i$ the cardinality of the $\tau_A^-$-orbit of $P_i$.
Equivalently, we have
\[
\ell_i = 1+\max\{j\geq 1\mid s_{P_i}(j)=0\} = \min\{j\geq 1\mid s_{P_i}(j)=1\}.
\]
Note that $P_i^{\succ \ell_i-1}=\tau_A^{-(\ell_i-1)}(P_i)\cong I_{\nu_\Delta(i)}$.

The following fact is well-known; see, for example, \cite{MiyYek}.
\begin{lemma}\label{lem-h-Delta}
There is a natural isomorphism of auto-equivalences $\nu_{\D}^{-h_\Delta}\cong [2]$ on $\D$.
\end{lemma}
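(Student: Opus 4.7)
The plan is to reduce the lemma to the well-known fractionally Calabi--Yau identity $\tau^{h_\Delta}\cong[-2]$ on $\D = \Db(\mod A)$ for $A = KQ$ of Dynkin type $\Delta$. The statement $\nu_\D^{-h_\Delta}\cong[2]$ then follows after converting between the AR translate and the Nakayama/Serre conventions via $\nu = \tau[1]$, so that $\tau^{-h_\Delta}\cong[2]$ is what I really need to establish.

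To verify $\tau^{-h_\Delta}\cong[2]$, I would track $\tau^{-}$-orbits of indecomposable projectives in $\D$. By the discussion preceding the lemma, the orbit of $P_i$ inside $\mod A$ terminates at $I_{\nu_\Delta(i)}$ after $\ell_i-1$ applications of $\tau^{-}$. One further application of $\tau^{-}$ leaves $\mod A$: since $I_{\nu_\Delta(i)}$ is injective, the derived formula gives $\tau^{-}(I_{\nu_\Delta(i)}) = \nu^{-}(I_{\nu_\Delta(i)})[1] = P_{\nu_\Delta(i)}[1]$. Iterating the same analysis starting from $P_{\nu_\Delta(i)}$, after $\ell_{\nu_\Delta(i)}$ further applications of $\tau^{-}$ I land at $I_{\nu_\Delta^2(i)}[1] = I_i[1]$ (using $\nu_\Delta^2 = \mathrm{id}$), and one more application of $\tau^{-}$ produces $P_i[2]$. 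This yields
\[
\tau^{-(\ell_i+\ell_{\nu_\Delta(i)})}(P_i) \cong P_i[2].
\]

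Next, I would invoke the classical Dynkin-combinatorial identity $\ell_i + \ell_{\nu_\Delta(i)} = h_\Delta$, which can be verified case-by-case from the Coxeter number/Nakayama involution tables above, or derived from the description of $[1]$ on the translation quiver $\Z\Delta$ as a half-translation $\tau^{-h_\Delta/2}$ composed with $\nu_\Delta$. This gives $\tau^{-h_\Delta}(P_i)\cong P_i[2]$ for each indecomposable projective $P_i$. Since every indecomposable object of $\D$ lies in the $\tau$-orbit of some $P_i[k]$, the isomorphism extends to all indecomposables, and functoriality of the resulting natural isomorphism $\tau^{-h_\Delta}\cong[2]$ follows from both sides being exact $K$-linear auto-equivalences commuting with $\tau$ and with $[1]$.

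The main obstacle will be rigorously establishing the identity $\ell_i + \ell_{\nu_\Delta(i)} = h_\Delta$. A uniform conceptual proof rests on Happel's structure theorem for $\D$, namely that its AR quiver is the translation quiver $\Z\Delta$ with $[1]$ acting (up to the Nakayama involution) as the half-translation $\tau^{-h_\Delta/2}$; alternatively, one checks the equality type-by-type using the tabulated values of $h_\Delta$ and $\nu_\Delta$. Everything else amounts to bookkeeping with the already-established $\tau^{-}$-orbit structure.
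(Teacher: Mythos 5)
The paper itself offers no proof of this lemma: it is quoted as a well-known fact with a pointer to [MiyYek]. Your reading of the statement is the right one ($\nu_\D$ is the derived translate $\nu_1=\nu[-1]$, so the claim is $\tau^{-h_\Delta}\cong[2]$), and your object-level orbit computation is exactly how the statement is used later in the paper: $P_i\mapsto I_{\nu_\Delta(i)}\mapsto P_{\nu_\Delta(i)}[1]\mapsto I_i[1]\mapsto P_i[2]$, giving $\tau^{-(\ell_i+\ell_{\nu_\Delta(i)})}(P_i)\cong P_i[2]$. (Your prose has an off-by-one: from $P_{\nu_\Delta(i)}[1]$ it takes $\ell_{\nu_\Delta(i)}-1$ further steps to reach $I_i[1]$, and then one more to reach $P_i[2]$; the displayed formula is nonetheless correct.) However, two steps need repair. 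First, the logical order: in the paper the identity $\ell_i+\ell_{\nu_\Delta(i)}=h_\Delta$, i.e.\ (\ref{eq-li-h}), is \emph{deduced from} Lemma \ref{lem-h-Delta}, so you must supply it independently. Your proposed uniform source --- the description of $[1]$ on $\Z\Delta$ as $\tau^{-h_\Delta/2}$ composed with $\nu_\Delta$ --- is essentially a restatement of the lemma (square it and you recover $\tau^{-h_\Delta}\cong[2]$), so that route is circular. The genuinely independent input is Gabriel's combinatorial placement of the projectives and injectives on slices of $\Z\Delta$, and note that the table of $h_\Delta$ and $\nu_\Delta$ alone does not suffice, since the $\ell_i$ depend on the orientation of $Q$ and not only on $\Delta$.

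Second, and more seriously, the upgrade from objects to functors. Knowing $\tau^{-h_\Delta}(X)\cong X[2]$ for every indecomposable $X$ does not by itself yield a natural isomorphism of auto-equivalences: two exact auto-equivalences can agree on all objects and still differ by a twist $\phi^*$ by an outer automorphism, and the observation that both sides commute with $\tau$ and $[1]$ does not rule this out. The standard repair is tilting-theoretic: $\tau^{-h_\Delta}[-2]$ is a standard equivalence given by a two-sided tilting complex $T$ with $T\cong A$ in $\Db(\mod A)$, hence $T\cong{}_1A_\phi$ as bimodules for some automorphism $\phi$ fixing the primitive idempotents; since every Dynkin quiver is a tree, every such $\phi$ is inner, so $T\cong A$ as bimodules and the isomorphism of functors follows. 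This is precisely the derived Picard group computation of [MiyYek], which is why the paper cites that reference; without some version of this argument your proof only establishes the twisted form of the statement.
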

In particular, by combining with (\ref{eq-nuD}), we get that 
\begin{align}\label{eq-li-h}
\ell_{i}+\ell_{\nu_\Delta(i)}=h_\Delta.
\end{align}

\begin{lemma}\label{lem-RF0}
The following are equivalent.
\begin{enumerate}[(1)]
\item $\idim_{A^{(m)}}[P_i]_0=2m-s_{P_i}(m)$;
\item $\idim_{A^{(m)}}[P_i]_0=\domdim_{A^{(m)}}([P_i]_0)$;
\item $P_i^{\succ m}$ is injective;
\item $P_i^{\succ m} \cong I_i$ with $s_{P_i}(m)$ odd, or $P_i^{\succ m}\cong I_{\nu_\Delta(i)}$ with $s_{P_i}(m)$ even;
\item $m\in\{\ell_i-1, h_\Delta-1\}+h_\Delta\Z_{\geq 0}$
\end{enumerate}
Moreover, in such a case, we have
\begin{align}\label{eq-spi(m)-hered}
s_{P_i}(m) = \begin{cases}
2t, & \text{if $m=\ell_i-1+th_\Delta$ with $t\geq 0$;}\\
2t+1, & \text{if $m=h_\Delta-1+th_\Delta$ with $t\geq 0$,}
\end{cases}
\end{align}
\end{lemma}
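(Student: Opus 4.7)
The plan is to first pin down the equivalence of (1), (2), (3) via Proposition \ref{prop-dim} together with the hereditary hypothesis, and then analyse the orbit of $P_i$ under $\nu_1^{-1}$ in $\Db(\mod A)$ to read off the equivalences with (4), (5) and the shift formula.

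For the first block, Proposition \ref{prop-dim} gives $\domdim_{A^{(m)}}[P_i]_0 = 2m - s_{P_i}(m)$ and $\idim_{A^{(m)}}[P_i]_0 - \domdim_{A^{(m)}}[P_i]_0 = \idim_A P_i^{\succ m}$. Since $\gldim A \leq 1$, every indecomposable $A$-module has injective dimension $0$ or $1$, with value $0$ precisely when it is injective. Hence (1) holds iff the difference vanishes iff (2) holds iff $P_i^{\succ m}$ is injective, which is (3).

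For the second block I trace the orbit of $P_i$ under $\nu_1^{-1}$ using (\ref{eq-nuD}): while $P_i^{\succ k}$ is non-injective, $\nu_1^{-1}$ coincides with $\tau_A^-$ and leaves the shift $s_{P_i}(k)$ unchanged; when $P_i^{\succ k}$ is injective, $\nu_1^{-1}$ coincides with $\nu_A^-(-)[1]$ and increases the shift by one. Starting from the projective $P_i$, the iterates $P_i, \tau_A^- P_i, \ldots, \tau_A^{-(\ell_i-1)}P_i$ first hit an injective at $k = \ell_i - 1$, namely $P_i^{\succ \ell_i - 1} = I_{\nu_\Delta(i)}$ with $s_{P_i}(\ell_i - 1) = 0$; the next step produces $P_i^{\succ \ell_i} = P_{\nu_\Delta(i)}$ with $s_{P_i}(\ell_i) = 1$. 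Starting afresh from the projective $P_{\nu_\Delta(i)}$ and using (\ref{eq-li-h}), after a further $\ell_{\nu_\Delta(i)} - 1 = h_\Delta - \ell_i - 1$ applications of $\tau_A^-$ one reaches the injective $I_{\nu_\Delta(\nu_\Delta(i))} = I_i$ at $k = h_\Delta - 1$ with $s_{P_i}(h_\Delta - 1) = 1$; one more step returns to $P_i$ with $s_{P_i}(h_\Delta) = 2$, consistent with Lemma \ref{lem-h-Delta}.

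Thus the $\nu_1^{-1}$-orbit of $P_i$ is $h_\Delta$-periodic with $s_{P_i}$ advancing by $2$ each period, and within any single period $P_i^{\succ k}$ is injective exactly when $k \in \{\ell_i - 1, h_\Delta - 1\}$, with values $I_{\nu_\Delta(i)}$ at even shift and $I_i$ at odd shift respectively. Expanding over all periods gives (3) $\Leftrightarrow$ (4) $\Leftrightarrow$ (5) together with the explicit formula (\ref{eq-spi(m)-hered}) in a single sweep. The only real obstacle is keeping the bookkeeping straight about when $\nu_1^{-1}$ raises the cohomological degree and when it does not; once that is correctly tabulated, the statement is a direct consequence of the orbit structure.
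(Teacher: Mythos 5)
Your proof is correct and follows essentially the same route as the paper: the equivalence of (1), (2), (3) via Proposition \ref{prop-dim} together with $\gldim A\leq 1$, and then the remaining equivalences and the formula (\ref{eq-spi(m)-hered}) from the facts $P_i^{\succ \ell_i-1}\cong I_{\nu_\Delta(i)}$, $\nu_1^{-}(I_i)\cong P_i[1]$, $\nu_\Delta^2=\mathrm{id}$, Lemma \ref{lem-h-Delta} and (\ref{eq-li-h}). Your explicit step-by-step tracking of the $\nu_1^{-1}$-orbit is simply a more spelled-out version of the paper's terse citation of those same ingredients, and the bookkeeping of when the shift increases is carried out correctly.
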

\begin{proof}
\underline{(1)$\Leftrightarrow$(2)$\Leftrightarrow$(3)}: Follows from Proposition \ref{prop-dim}.

\underline{(3)$\Leftrightarrow$(4)}: Follows from the facts that $\nu_\Delta^2=\mathrm{id}$, $P_i^{\ell-1}\cong I_{\nu(i)}$, and $\nu_1^-(I_i)\cong P_i[1]$.

\underline{(4)$\Leftrightarrow$(5)}:
This follows from Lemma \ref{lem-h-Delta}; namely, we have the following equivalent conditions.
\begin{align}
m=\ell_i-1+th_\Delta\text{ for some }t\geq 0 & \Leftrightarrow \nu_1^{-m}(P_i)\cong \nu_1^{-(\ell_i-1)}(P_i)[2t] \cong I_{\nu_\Delta(i)}[2t] \in \add(DA[2t]), \notag \\
m=-1+th_\Delta\text{ for some }t\geq 1 & \Leftrightarrow \nu_1^{-m}(P_i)\cong \nu_1(P_i)[2t] \cong I_{i}[2t-1] \in \add(DA[tk-1]). \notag 
\end{align}

The last statement is clear from these equivalences.
\end{proof}

\begin{corollary}\label{cor-RF}
Suppose $A$ is a representation-finite hereditary algebra.
\begin{enumerate}[(1)]
\item If $\ell_i\neq \ell_j$ for some $i,j$, then $A^{(m)}$ being a higher Auslander algebra is equivalent to $m= th_\Delta-1$ for some $t>0$.
In such a case, we have 
\[
\domdim A^{(m)}=2t(h_\Delta-1)-1=\gldim A^{(m)}.
\]

\item If all $\ell_i$'s are equal, then $A^{(m)}$ being a higher Auslander algebra is equivalent to $m=th_\Delta/2-1$ for some $t>0$.
In such a case, we have
\[
\domdim A^{(m)}=t(h_\Delta-1)-1=\gldim A^{(m)}.
\]
\end{enumerate}
\end{corollary}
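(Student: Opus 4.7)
The plan is to reduce the question to a congruence problem on $m$ via Theorem \ref{thm-dims} and Lemma \ref{lem-RF0}, and then split into the two cases based on the behaviour of the $\ell_i$'s.

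First, I would unpack what it means for $A^{(m)}$ to be higher Auslander. Since $A^{(m)}$ has finite global dimension by Lemma \ref{lem-gldim} and is non-self-injective (e.g. $[DA]_m$ is injective but not projective), being higher Auslander is simply the condition $\gldim A^{(m)} = \domdim A^{(m)}$. By Theorem \ref{thm-dims}, rewriting this as
\[
2m - \min_P\bigl(s_P(m) - \epsilon_P(m)\bigr) \;=\; 2m - \max_P s_P(m),
\]
and using $\epsilon_P(m)\in\{0,1\}$, a short argument shows this equality forces $\epsilon_P(m)=0$ for every indecomposable projective $A$-module $P$ \emph{and} $s_P(m)$ to be independent of $P$. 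Conversely, those two conditions suffice.

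Second, I would translate these two conditions via Lemma \ref{lem-RF0}. The condition $\epsilon_{P_i}(m)=0$ is, by definition, the same as $P_i^{\succ m}$ being injective, which by Lemma \ref{lem-RF0} means
\[
m \equiv \ell_i - 1 \pmod{h_\Delta} \quad\text{or}\quad m \equiv h_\Delta - 1 \pmod{h_\Delta},
\]
and must hold for every vertex $i$. Moreover, once this holds, formula \eqref{eq-spi(m)-hered} gives an explicit value of $s_{P_i}(m)$ depending only on which residue class $m$ falls in and on $\ell_i$, so the constancy of $s_{P_i}(m)$ in $i$ becomes automatic once one sorts out the congruences.

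Third, I would handle the two cases. In case (1), some $\ell_i \ne \ell_j$; since $1 \le \ell_k \le h_\Delta - 1$ for all $k$ (using $\ell_i + \ell_{\nu_\Delta(i)} = h_\Delta$ from \eqref{eq-li-h}), the residues $\ell_i - 1$ and $\ell_j - 1$ are distinct modulo $h_\Delta$ and both different from $h_\Delta - 1$; hence a single $m$ can simultaneously satisfy the congruence for $i$ and for $j$ only via the common option $m \equiv h_\Delta - 1 \pmod{h_\Delta}$, giving $m = th_\Delta - 1$ for $t\geq 1$. Plugging into \eqref{eq-spi(m)-hered} yields $s_{P_i}(m) = 2t-1$, and then Theorem \ref{thm-dims} gives $\domdim A^{(m)} = 2m - (2t-1) = 2t(h_\Delta - 1) - 1$. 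In case (2), all $\ell_i$ are equal; combined with $\ell_i + \ell_{\nu_\Delta(i)} = h_\Delta$ this forces $\ell_i = h_\Delta/2$ for every $i$. The congruence becomes $m \equiv h_\Delta/2 - 1$ or $h_\Delta - 1 \pmod{h_\Delta}$, i.e.\ $m \equiv -1 \pmod{h_\Delta/2}$, giving $m = th_\Delta/2 - 1$ with $t\geq 1$. The formula \eqref{eq-spi(m)-hered} yields $s_{P_i}(m) = t-1$ in either residue class (odd versus even $t$), and $\domdim A^{(m)} = 2m - (t-1) = t(h_\Delta - 1) - 1$.

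The main obstacle is the bookkeeping in case (2): one has to check that the two possible residues of $m$ modulo $h_\Delta$ produce the \emph{same} value of $s_{P_i}(m)$ when expressed in terms of the new parameter $t$, so that $\gldim$ and $\domdim$ really agree. This is handled cleanly by writing $t = 2s+1$ and $t = 2s+2$ separately in \eqref{eq-spi(m)-hered} and observing that both give $s_{P_i}(m) = t-1$.
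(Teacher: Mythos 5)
Your proposal is correct and takes essentially the same route as the paper: reduce the equality $\gldim A^{(m)}=\domdim A^{(m)}$ to the two conditions that every $P_i^{\succ m}$ is injective and that $s_{P_i}(m)$ is constant in $i$, translate the first into the congruences of Lemma \ref{lem-RF0}(5), and then read off the common value from the formula \eqref{eq-spi(m)-hered}. Your handling of the constancy check (both residue classes in case (2) giving $s_{P_i}(m)=t-1$) is exactly the point the paper also relies on, just made more explicit.
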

\begin{proof}
Note that $\domdim A^{(m)}=\idim A^{(m)}$ (the latter being equal to $\gldim A^{(m)}$) if, and only if, $\domdim_{A^{(m)}}[P_i]_0=\idim_{A^{(m)}}[P_i]_0$ for all $i\in Q_0$.
Therefore, in the case when equality holds, by (2)$\Rightarrow$(5) of Lemma \ref{lem-RF0}, $m$ has to be $th_\Delta-1$, or $\ell+th_\Delta-1$ when $\ell_i=\ell$ for all $i$.
Note that $\ell=h_\Delta/2$ in the latter case.
This proves one direction for both (1) and (2).

For the converse, if $m$ is as stated, then by (5)$\Rightarrow$(2) of Lemma \ref{lem-RF0}, $\idim_{A^{(m)}} [P_i]_0=\domdim_{A^{(m)}} [P_i]_0$ for all $i$, and this is equal to $2m-s_{P_i}(m)$.
By the formula (\ref{eq-spi(m)-hered}) in Lemma \ref{lem-RF0}, this value does not depend on $i$; hence, it is precisely the dominant dimension as well as the injective dimension of $A^{(m)}$.
\end{proof}
\begin{remark}
Note that the condition $\ell_i=\ell_j(=h_\Delta/2)$ for all $i,j$ is satisfied if, and only if, the orientation of $Q$ is stable under the involution $\nu$ (c.f. the description of the AR-quiver of $KQ$ from \cite{Gab1}).
A path algebra $KQ$ which satisfies this condition is called \emph{$(h_\Delta/2)$-homogeneous} in \cite{HerIya}.
\end{remark}

Combining Corollary \ref{cor-RI} and Corollary \ref{cor-RF}, we obtain a new characterisation of representation-(in)finiteness of a hereditary algebra.
\begin{corollary}\label{cor-summary-hered}
The following are equivalent for a hereditary algebra $A$.
\begin{itemize}
\item $A$ is representation-finite (resp. representation-infinite);
\item there exist infinitely many (resp. there does not exists any) $m\in \Z_+$ such that $A^{(m)}$ is a higher Auslander algebra.
\end{itemize}
Moreover, in the representation-finite case, the global and dominant dimension of the higher Auslander replicated algebra is equal to $t(h_\Delta-1)-1$ for some $t>0$.
\end{corollary}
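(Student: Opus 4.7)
My plan is to treat this statement as a direct packaging of Corollaries \ref{cor-RI} and \ref{cor-RF}; no new computation should be needed, only a careful bookkeeping of the two cases in Corollary \ref{cor-RF} together with a mild reparametrisation for the ``moreover'' clause.

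First I would dispose of the representation-infinite case. By Corollary \ref{cor-RI}, $\domdim A^{(m)} = 2m$ and $\gldim A^{(m)} = 2m+1$ for every $m\geq 1$; since these values differ by one, no $A^{(m)}$ can satisfy $\gldim A^{(m)} = \domdim A^{(m)}$, hence $A^{(m)}$ is never higher Auslander. This simultaneously gives the implication ``$A$ representation-infinite $\Rightarrow$ no $m$ yields a higher Auslander $A^{(m)}$'' and, contrapositively, ``infinitely many higher Auslander $A^{(m)}$ $\Rightarrow$ $A$ representation-finite''.

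Next I would treat the representation-finite direction. Here the valued quiver of $A$ is Dynkin of some type $\Delta$, and Corollary \ref{cor-RF} describes the set of $m$ with $A^{(m)}$ higher Auslander as either $\{th_\Delta - 1 : t>0\}$ (when not all $\ell_i$ agree) or $\{th_\Delta/2 - 1 : t>0\}$ (when they do); both are infinite arithmetic progressions, so representation-finiteness indeed yields infinitely many higher Auslander replicated algebras. This closes the equivalence. For the ``moreover'' statement, Corollary \ref{cor-RF}(1) supplies the common dimension $2t(h_\Delta - 1) - 1$, which by setting $t' := 2t$ takes the form $t'(h_\Delta - 1) - 1$; Corollary \ref{cor-RF}(2) supplies $t(h_\Delta - 1) - 1$ outright. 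So in either subcase the dimension has the asserted shape.

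The only thing to be mildly careful about is that the two subcases of Corollary \ref{cor-RF} parametrise the higher Auslander $m$'s and the resulting dimensions differently; the uniform description in the ``moreover'' clause is obtained by the trivial renaming $t \mapsto 2t$ in the non-homogeneous subcase, so there is no genuine obstacle. I view this corollary mainly as a clean summary, phrased in a form that will later be generalised to the Serre-formal setting.
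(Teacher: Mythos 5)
Your proposal is correct and is exactly the paper's argument: the paper states this corollary as an immediate consequence of combining Corollaries \ref{cor-RI} and \ref{cor-RF}, with the representation-infinite case excluded by the off-by-one gap $\gldim A^{(m)}=2m+1>2m=\domdim A^{(m)}$ and the representation-finite case yielding the infinite arithmetic progressions of admissible $m$. Your observation that the two subcases of Corollary \ref{cor-RF} are unified in the ``moreover'' clause by the substitution $t\mapsto 2t$ in the non-homogeneous case is the right (and only) bookkeeping point.
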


\subsection{Further Properties}\label{subsec-hered-further}
The fact that the base algebra $A$ is hereditary allows us to obtain a few more properties about $A^{(m)}$ compared to the more general case of base algebras which will be studied in the next section.
The key property of $A$ being hereditary allows us list the indecomposable $A^{(m)}$-modules explicitly - they are all in the forms listed in Observation \ref{eg-modules}.

\begin{proposition}\label{prop-reptype}
Every indecomposable $A^{(m)}$-module is in one of the following forms.
\begin{itemize}
\item $[M]_i$ with $M$ an indecomposable $A$-module and $0\leq i\leq m$.
\item $[\nu_A(P),P]_i$ with $P$ an indecomposable projective $A$-module and $0\leq i<m$.
\item $\Omega_{A^{(m)}}^{-}([M]_i)$ with $M$ an indecomposable non-injective $A$-module and $0\leq i<m$.
\end{itemize}
In particular, $A^{(m)}$ preserves the representation-type of $A$ for all $m\geq 1$.
Moreover, if $A$ is representation-finite with $r$ indecomposable modules, then there are $(2m+1)r$ indecomposable $A^{(m)}$-modules.
\end{proposition}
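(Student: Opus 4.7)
My plan is to prove Proposition~\ref{prop-reptype} in three stages.

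First, I verify that each of the three families consists of pairwise non-isomorphic indecomposable modules. The stalks $[M]_i$ are indecomposable because $[?]_i:\mod A\to\mod A^{(m)}$ is a fully faithful exact functor preserving indecomposability (Observation~\ref{eg-modules}(1)). The modules $[\nu_A(P), P]_i$ are indecomposable direct summands of the projective-injective module $[DA, A]_i$ (Observation~\ref{eg-modules}(2)), one per indecomposable projective $P$. For the cosyzygies $\Omega_{A^{(m)}}^-([M]_i)$, I combine the short exact sequence~(\ref{eq-cosyzygy}) with the fact that $[M]_i\hookrightarrow [I_M,\nu_A^-(I_M)]_i$ is an injective envelope (hence essential), ruling out potential splittings of the cokernel by appealing to the indecomposability of $[M]_i$. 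The three families are then distinguished by support pattern (single-layer versus two-layer), and within the two-layer case, by whether the connecting map $f_{i+1}$ is an isomorphism (for $[\nu_A(P), P]_i$) or a proper epimorphism (for $\Omega_{A^{(m)}}^-([M]_i)$).

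Second, the count is immediate. Assuming $A$ has $r$ indecomposable modules and $n$ simple modules (equal to the number of indecomposable projectives and of indecomposable injectives), the three families contribute respectively $r(m+1)$, $nm$, and $(r-n)m$ indecomposable $A^{(m)}$-modules, summing to $r(2m+1)$. Preservation of representation type is then formal: the stalks $[M]_i$ already provide a full copy of $\mod A$ inside $\mod A^{(m)}$, so $A^{(m)}$ is representation-finite if and only if $A$ is.

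Third, and most substantively, I would show that every indecomposable $A^{(m)}$-module appears in the list, proceeding by induction on $m$ with the trivial base $m=0$. For the inductive step, view an $A^{(m)}$-module as a triple $(N_0, N', f_1)$, where $N_0\in\mod A$, $N'$ is the $A^{(m-1)}$-module obtained by idempotent truncation (discarding the $0$-th layer), and $f_1:\nu_A(N_1')\to N_0$ is the connecting map from the top layer $N_1'$ of $N'$. By the inductive hypothesis, $N'$ lies (up to index shift) in one of the three listed forms. The main obstacle is to rule out indecomposable $A^{(m)}$-modules whose support spans three or more consecutive layers; here the hereditary hypothesis is essential, since the cocycle relation $f_{i-1}\circ(f_i\otimes 1_{DA})=0$ together with $\gldim A=1$ and the equivalence $\nu_A:\proj A\xrightarrow{\sim}\inj A$ forces any such configuration to split off a summand already in the list. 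Once the support is confined to at most two consecutive layers, a direct analysis of the connecting map yields the three listed forms: single-layer support gives $[M]_i$; two-layer support with isomorphism connecting map gives $[\nu_A(P), P]_i$; and two-layer support with proper epimorphism connecting map gives $\Omega_{A^{(m)}}^-([M]_i)$ for the appropriate indecomposable non-injective $M$.
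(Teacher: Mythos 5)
Your overall plan has the right shape, but the central step---completeness of the list---is not actually proved, and that is where essentially all of the content of the proposition lies. (For comparison, the paper does not argue directly either: it invokes the theory of repetitive algebras, i.e.\ Happel's description of the indecomposable $\hat{A}$-modules via the equivalence between $\Db(\mod A)$ and the stable module category of the self-injective algebra $\hat{A}$, together with the fact that $A^{(m)}\cong \hat{A}/\hat{A}(1-e^{(m)})\hat{A}$ is a convex truncation, so that indecomposable $A^{(m)}$-modules are exactly the indecomposable $\hat{A}$-modules supported on layers $0,\dots,m$.) In your third stage you assert that the cocycle relation, $\gldim A=1$ and the equivalence $\nu_A\colon\proj A\to\inj A$ ``force any such configuration to split off a summand already in the list''; that sentence restates the proposition rather than proving it. Two concrete things are missing. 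First, your induction passes from $N$ to the truncation $N'$ obtained by deleting layer $0$, but the truncation of an indecomposable module need not be indecomposable, so the inductive hypothesis only yields that $N'$ is a \emph{direct sum} of listed modules; you must then analyse which connecting data $f_1\colon N_1\otimes_A DA\to N_0$ glue such a sum to $N_0$ indecomposably, and show this forces $N'$ to be concentrated in a single layer with $f_1$ of one of the two prescribed shapes. None of that analysis appears. Second, even the two-layer case is not routine: a two-layer module is a triple $(X,Y,g)$ with $g\colon Y\otimes_A DA\to X$, and one has to show indecomposability forces $Y$ projective and $g$ either an isomorphism onto $\nu_A(Y)$ or the projection $I_M\to I_M/M$; this rests on an Ext computation (such a module is an extension of the stalk of $Y$ by the stalk of $X$, and $\Ext^1_{A^{(m)}}$ between stalks in adjacent layers must be computed using heredity of $A$), which you do not carry out.

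There is also a smaller but genuine flaw in your first stage: the indecomposability of $\Omega^{-}_{A^{(m)}}([M]_i)$ does \emph{not} follow from the indecomposability of $[M]_i$ together with the essentiality of its injective envelope---over a general algebra the cosyzygy of an indecomposable module can decompose, so ``ruling out potential splittings of the cokernel by appealing to the indecomposability of $[M]_i$'' is not a valid inference. The clean repair is to note that for $i<m$ the injective envelope $[I_M,\nu_A^-(I_M)]_i$ of $[M]_i$ in $\mod A^{(m)}$ is also its injective envelope over $\hat{A}$ (it is a sum of projective-injective $\hat{A}$-modules and essentiality is inherited), and $\Omega^{-}_{\hat{A}}$ preserves indecomposability because $\hat{A}$ is self-injective. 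Your counting, and the separation of the two two-layer families by whether the connecting map is an isomorphism or has nonzero kernel $M$, are fine once indecomposability and pairwise non-isomorphism are secured.
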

\begin{proof}
According to \cite{ABST}, this is somewhat well known - as it is just a consequence of $A^{(m)}$ being a ``nice" idempotent truncation of the repetitive algebra $\hat{A}$.
The stated list can then be obtained using the theory of repetitive algebras.
\end{proof}

This result hinted that there should be a close connection between indecomposable $A^{(m)}$-modules and dimension vectors.
We closely follow the article \cite{PS} in the following.
Consider a basic algebra $A=KQ/I$ over an algebraically closed field $K$.
It is called \emph{triangular} if the quiver $Q$ contains no oriented cycles.
Given a triangular algebra $A$, the \emph{Tits form} $q_A(x)$ of $A$ is defined as 
\begin{align}
q_A(x) := \sum\limits_{i \in Q_0}^{}{x_i^2}- \sum\limits_{\alpha \in Q_1}{x_{s(\alpha)} x_{t(\alpha)}}+ \sum\limits_{i,j \in Q_0}{\dim_K \Ext_A^2(S_i,S_j)  x_i x_j},
\end{align}
where $x=(x_i)_{i\in Q_0}$, and $s(-), t(-)$ denote the start and target of an arrow respectively. 
The \emph{roots} of $q_A$ are the vectors $v \in \mathbb{Z}^{|Q_0|}$ such that $q_A(v)=1$.
A root $v$ of $q_A$ is \emph{positive} if all of its entries are non-negative.
Gabriel's famous theorem \cite{Gab2} says that for a representation-finite hereditary algebra, there is a one-to-one correspondence between the dimension vectors of the indecomposable modules and the positive roots of the Tits form.

It is not difficult to see that the same holds for $A^{(m)}$ when the base algebra $A$ is representation-finite hereditary, and we suspect this is known to people who have worked with replicated algebras.
Since we cannot find this precise statement in the literature, a brief explanation is provided below.
\begin{proposition}\label{prop-tits}
If $A$ is representation-finite hereditary algebra over an algebraically closed field $K$, then
$X\mapsto \mathbf{dim}(X)$ induces a bijection from the set of isoclasses of indecomposable $A^{(m)}$-modules to the set of of positive roots of the Tits form of $A^{(m)}$
\end{proposition}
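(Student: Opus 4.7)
The plan is to invoke Bongartz's theorem \cite{PS}: for a representation-directed algebra $B$ over an algebraically closed field, the map $X\mapsto \mathbf{dim}(X)$ induces a bijection between isoclasses of indecomposable $B$-modules and positive roots of the Tits form $q_B$. The strategy is therefore to verify that, under the hypotheses of the proposition, $A^{(m)}$ is representation-directed.

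First, $A^{(m)}$ is triangular: its Gabriel quiver consists of $m+1$ copies of the Dynkin quiver of $A$ stacked and connected by arrows arising from the $DA$-bimodule, all pointing in the same direction, so it contains no oriented cycles; in particular $q_{A^{(m)}}$ is well-defined. Representation-finiteness of $A^{(m)}$, with exactly $(2m+1)r$ indecomposables, is Proposition \ref{prop-reptype}.

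To establish representation-directedness I would identify the AR-quiver of $A^{(m)}$ with a finite connected full subquiver of the AR-quiver of the repetitive algebra $\hat{A}$. By Happel's theorem, the stable AR-quiver of $\hat{A}$ is the translation quiver $\Z\Delta$, where $\Delta$ is the underlying Dynkin diagram of $A$; in particular, it is acyclic. Through the idempotent truncation $A^{(m)}=e^{(m)}\hat{A}e^{(m)}$, the three families of indecomposables listed in Proposition \ref{prop-reptype} correspond to a staircase region of $\Z\Delta$ containing precisely $(2m+1)r$ vertices, and both irreducible morphisms and the AR-translate on this region are inherited from $\Z\Delta$ by Observation \ref{eg-modules} and the short exact sequence \eqref{eq-cosyzygy}.

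The main obstacle I anticipate is making this idempotent-truncation argument rigorous: idempotent truncation does not preserve AR-sequences in general, so one has to verify by hand that each AR-sequence in $\mod A^{(m)}$ is obtained from a corresponding one in $\mod \hat{A}$ by restricting to the summand $e^{(m)}\hat{A}e^{(m)}$, with the projective-injective modules $[\nu_A(P),P]_i$ playing the role of the boundary and the cosyzygies $\Omega^{-}_{A^{(m)}}([M]_i)$ filling in the remaining mesh. This is a finite case-by-case check using the explicit list in Proposition \ref{prop-reptype}. Once representation-directedness of $A^{(m)}$ is established, Bongartz's theorem applies and yields the desired bijection with positive roots of $q_{A^{(m)}}$.
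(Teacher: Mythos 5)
Your route is genuinely different from the paper's, and it has a gap at the point where you invoke the quadratic-form theorem. The paper never goes through representation-directedness: it observes that $A^{(m)}$ is a \emph{convex} subalgebra of the repetitive algebra $\hat{A}$, which is simply connected, so $A^{(m)}$ is itself simply connected; together with triangularity and representation-finiteness (Proposition \ref{prop-reptype}) this is exactly the hypothesis of the theorem cited as \cite[Theorem 7.2]{PS}, namely Bongartz's theorem for representation-finite \emph{simply connected} algebras. That one-line convexity argument also makes your anticipated ``main obstacle'' (the case-by-case verification that the AR-quiver of $A^{(m)}$ is a directed staircase region of $\Z\Delta$) unnecessary.

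The gap in your version is the form of the theorem you rely on. For a representation-directed algebra $B$ the classical bijection (Ringel, Bongartz) is between indecomposables and the positive roots of the \emph{Euler} form $\chi_B$, the quadratic form of the full Cartan matrix, i.e.\ the alternating sum over \emph{all} $\Ext^i$. The proposition concerns the \emph{Tits} form $q_{A^{(m)}}$, which truncates this after the $\Ext^2$-term. The two forms agree only when the global dimension is at most $2$, and $\gldim A^{(m)}$ is typically larger (see Corollary \ref{cor-RF}), so a priori they need not have the same positive roots. The version of Bongartz's theorem that controls the Tits form is the one for simply connected representation-finite algebras. Hence to complete your argument you would either have to prove in addition that $q_{A^{(m)}}$ and $\chi_{A^{(m)}}$ have the same positive roots (which you do not address), or replace representation-directedness by simple connectedness of $A^{(m)}$ --- at which point you have reproduced the paper's proof and the AR-quiver analysis can be dropped.
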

\begin{proof}
It follows from the definition of $A^{(m)}$ and heredity of $A$ that $A^{(m)}$ is triangular; so it makes sense to talk about Tit forms of $A^{(m)}$.
Moreover, $A^{(m)}$ is a so-called convex subalgebra of the repetitive algebra $\hat{A}$, which is simply-connected.
This implies that $A^{(m)}$ is also simply-connected.

Recall from Proposition \ref{prop-reptype} that $A^{(m)}$ is representation-finite.
Hence, $A^{(m)}$ satisfies all the assumption required to apply a theorem of Bongartz (see \cite[Theorem 7.2]{PS}), which gives the claim.
\end{proof}

Recall the following generalisations of quasi-Frobenius (i.e. self-injective) algebras from \cite{Thr}.
\begin{definition}\label{def-QF123}
Let $A$ be a finite dimensional algebra.
\begin{itemize}
\item $A$ is \emph{QF-1} if every faithful left $A$-module $M$ has double centraliser property, i.e. there is a canonical ring epimorphism $A\to \End_{\Gamma}(M_{\Gamma})$, where $\Gamma:=\End_A(M)^{\mathrm{op}}$. 
\item $A$ is \emph{QF-2} if every indecomposable projective $A$-module has a simple socle.
\item $A$ is \emph{QF-3} if $\domdim (A)\geq 1$.
\end{itemize}
\end{definition}
Note that any self-injective algebra satisfies all three conditions.
An algebra is \emph{QF-13} if it is QF-1 and QF-3.

\begin{proposition}\label{prop-yamagata-QF13}{\rm \cite[Corollary 3.5.1]{Yam}}
Suppose $A$ is a QF-3 algebra with $\domdim A\geq 2$.
Then $A$ is also QF-1 if, and only if, every indecomposable module $M$ has $\codomdim(M)\geq 1$ or $\domdim (M)\geq 1$
\end{proposition}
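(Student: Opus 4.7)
The plan is to reduce everything to the minimal faithful module. Let $V$ denote the basic direct sum of all indecomposable projective-injective $A$-modules. Since $\domdim A \geq 2$, the initial segment of the minimal injective coresolution of $A$ provides an exact sequence $0 \to A \to V_0 \to V_1$ with $V_0, V_1 \in \add V$. By the Morita-Tachikawa correspondence, $V$ is then the (essentially unique) minimal faithful module on which $A$ has the double centralizer property.

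The next observation is that every faithful $A$-module $M$ contains $V$ as a direct summand. Indeed, faithfulness means $A$ embeds into $M^n$ for some $n$, so each indecomposable projective-injective summand $V_i$ of $A$ embeds into $M^n$; by injectivity of $V_i$, this inclusion splits, and Krull-Schmidt forces $V_i$ to be a summand of $M$. Hence every faithful module decomposes as $M = V \oplus N$.

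The crux is the following lemma: for $M = V \oplus N$ the algebra $A$ has the double centralizer property on $M$ if and only if every indecomposable summand of $N$ is either generated by $V$ (equivalently has $\codomdim \geq 1$) or cogenerated by $V$ (equivalently has $\domdim \geq 1$). My approach is to invoke M\"uller's criterion: $A$ has the double centralizer property on $M$ iff there exists an exact sequence $0 \to A \to M_0 \to M_1$ with $M_i \in \add M$ that remains exact after applying $\Hom_A(-, M)$. Starting from the $\add V$-copresentation $0 \to A \to V_0 \to V_1$ obtained above, analyzing when $\Hom_A(-, V \oplus N)$ preserves exactness reduces, for each indecomposable summand $L$ of $N$, to asking whether maps $A \to L$ extend along $A \hookrightarrow V_0$ (which succeeds when $L$ is cogenerated by $V$) and whether dual lifting works (which succeeds when $L$ is generated by $V$).

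With the lemma in hand, the proposition is immediate: $A$ is QF-1 iff every faithful $M$ has the double centralizer property, iff (by the lemma, applied to $M = V \oplus L$ for each indecomposable $L$) every indecomposable $L$ is generated or cogenerated by $V$, which is precisely the stated condition; note that indecomposable summands of $V$ itself trivially satisfy both $\domdim \geq 1$ and $\codomdim \geq 1$. I expect the main obstacle to be the ``only if'' direction of the key lemma: given an indecomposable $L$ neither generated nor cogenerated by $V$, one must exhibit a $B$-endomorphism of $V \oplus L$ (with $B = \End_A(V \oplus L)^{\mathrm{op}}$) that is not induced by the $A$-action, and the natural construction exploits the non-factorization of certain maps between $L$ and $V$ through the $\add V$-copresentation.
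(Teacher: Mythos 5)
The paper offers no proof of this statement---it is imported verbatim from \cite[Corollary 3.5.1]{Yam}---so the only thing to assess is whether your reconstruction would actually work. Your framing is right in outline: $V$ is indeed the minimal faithful module, every faithful module contains $V$ as a summand (your Krull--Schmidt argument is fine), and ``generated/cogenerated by $V$'' does translate into $\codomdim\geq 1$/$\domdim\geq 1$. The problems are inside the key lemma. For the ``if'' direction, the M\"uller-type criterion applied to the \emph{fixed} copresentation $0\to A\to V_0\to V_1$ is only a sufficient condition, and the required exactness of $\Hom_A(-,V\oplus N)$ on that sequence genuinely fails in cases where the conclusion is true. Concretely, take $A=T_{3,2}$ (the Auslander algebra of $K[x]/(x^2)$), $V=P_1\oplus P_2$ and $N=S_3=P_3$: here $S_3$ is cogenerated by $V$ and one checks directly that $V\oplus S_3$ has the double centraliser property, yet $\Hom_A(V_0,S_3)\to\Hom_A(A,S_3)$ is not surjective, because the identity map on the summand $P_3=S_3$ of $A$ would have to extend through its injective envelope $P_2$ while $\Hom_A(P_2,S_3)=0$ (equivalently, $\Ext^1_A(\Omega^{-1}_A(A),S_3)=\Ext^1_A(S_2,S_3)\neq 0$). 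Your parenthetical assignment is also reversed: extension of $A\to L$ along $A\hookrightarrow V_0$ is what works when $L$ is \emph{generated} by $V$ (lift through an epimorphism from $\add V$, then use injectivity of its source), and in any case both exactness conditions must hold for \emph{every} summand $L$, so distributing one condition to the generated summands and the other to the cogenerated ones cannot close the argument. The correct route for this direction bypasses M\"uller entirely: since $\domdim A\geq 2$, $V$ itself has the double centraliser property, so any $\phi\in\End_B(V\oplus N)$ agrees on $V$ with right multiplication by some $a\in A$; after subtracting $(\cdot\, a)$ one has $\phi|_V=0$, and then $\phi$ annihilates every summand generated by $V$ (it commutes with all maps $V\to L$, which lie in $B$) and every summand cogenerated by $V$ (it commutes with all maps $L\to V$).

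The ``only if'' direction is the actual content of the theorem, and you have only named it as an obstacle rather than resolved it. Given an indecomposable $L$ neither generated nor cogenerated by $V$, a non-induced $B$-endomorphism of $V\oplus L$ is forced to have the form $(0,\phi_L)$ with $\phi_L\neq 0$ vanishing on the trace $\mathrm{tr}_V(L)$, with image contained in the reject $\mathrm{rej}_V(L)$, and commuting with $\End_A(L)$. The hypotheses only give $\mathrm{tr}_V(L)\neq L$ and $\mathrm{rej}_V(L)\neq 0$; the existence of a nonzero $\End_A(L)$-equivariant map $L/\mathrm{tr}_V(L)\to\mathrm{rej}_V(L)$ is not automatic from this, and it is not even clear a priori that $V\oplus L$ (rather than some other faithful module built from $L$) is the correct witness. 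This is a genuine gap: without it the proposal proves only one implication.
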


\begin{proposition}\label{prop-QF}
Suppose $A$ is a hereditary algebra and $m$ is a positive integer.
For any indecomposable $A^{(m)}$-module $X$ which is not projective-injective, we have 
\begin{itemize}
\item $\domdim_{A^{(m)}}(X)\geq 1$ if, and only if, $X=[M]_i$ with $i<m$ for some indecomposable $A$-module, or $X=\Omega_{A^{(m)}}^-([M]_i)$ with $i<m$ for some indecomposable non-injective $A$-module $M$.
\item $\codomdim_{A^{(m)}}(X)\geq 1$ if, and only if, $X=[M]_i$ with $i>1$ for some indecomposable $A$-module, or $X=\Omega_{A^{(m)}}^-([M]_i)$ with $i\leq m$ for some indecomposable non-injective $A$-module $M$.
\end{itemize}
In particular, $A^{(m)}$ is a QF-13 algebra for all $m\geq 2$, or for all $m\geq 1$ when $A$ is not uniserial.
\end{proposition}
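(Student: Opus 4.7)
The plan is to classify the top and socle of each indecomposable $A^{(m)}$-module listed in Proposition \ref{prop-reptype}, reducing the two bullet points to simple conditions on the level of support. The key preliminary observation is that, since $\inj A^{(m)}=\add\{[DA]_m,[DA,A]_k\mid 0\le k<m\}$ and $\proj A^{(m)}=\add\{[A]_0,[DA,A]_k\mid 0\le k<m\}$, the injective envelope of a simple $[S]_j$ equals the projective-injective $[I_S,\nu_A^-(I_S)]_j$ when $j<m$ and the non-projective $[I_S]_m$ when $j=m$, while the projective cover of $[S]_j$ equals $[P_S]_0$ when $j=0$ and the projective-injective $[\nu_A(P_S),P_S]_{j-1}$ when $j\geq 1$. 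Hence $\domdim(X)\geq 1$ (resp.\ $\codomdim(X)\geq 1$) precisely when $\mathrm{soc}(X)$ has no component at level $m$ (resp.\ when $\mathrm{top}(X)$ has no component at level $0$).

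For $X=[M]_i$, both $\mathrm{soc}(X)$ and $\mathrm{top}(X)$ are $\mathrm{soc}(M)$, $\mathrm{top}(M)$ placed at level $i$, so the conditions become ``$i<m$'' and ``$i\geq 1$''  respectively. For $X=\Omega_{A^{(m)}}^-([M]_i)$ with $M$ indecomposable non-injective and $0\leq i<m$, I invoke Lemma \ref{lem-injcopres}: the minimal injective copresentation of $[M]_i$ reads
\[
0\to[M]_i\to[I^0,\nu_A^-(I^0)]_i\to[I^1,\nu_A^-(I^1)]_i\to[\tau_A^-(M)]_{i+1}\to 0,
\]
whose two middle terms are projective-injective. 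By minimality, $[I^1,\nu_A^-(I^1)]_i$ is the injective envelope of the cosyzygy $\Omega_{A^{(m)}}^-([M]_i)$, giving $\domdim\geq 1$. For the top, observe that $[M]_i$ is supported only at level $i$ whereas every indecomposable summand $[\nu_A(P),P]_i$ of $[I^0,\nu_A^-(I^0)]_i$ has its top at level $i+1$; hence $[M]_i$ sits inside the radical of $[I^0,\nu_A^-(I^0)]_i$, and
\[
\mathrm{top}(\Omega_{A^{(m)}}^-([M]_i))=\mathrm{top}([I^0,\nu_A^-(I^0)]_i)
\]
lives at level $i+1\geq 1$, yielding $\codomdim\geq 1$.

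For the QF-13 conclusion, the remark immediately after Theorem \ref{thm-dims} supplies $\domdim A^{(m)}\geq m+1\geq 2$ in the stated ranges of $m$, so $A^{(m)}$ is QF-3 with dominant dimension at least two. Applying Proposition \ref{prop-yamagata-QF13}, it suffices to verify that every indecomposable $X$ satisfies $\domdim(X)\geq 1$ or $\codomdim(X)\geq 1$: projective-injectives satisfy both trivially; cosyzygies satisfy both by the previous paragraph; and stalk modules $[M]_i$ are covered because the ranges ``$i<m$'' and ``$i\geq 1$'' together exhaust $\{0,1,\ldots,m\}$ whenever $m\geq 1$.

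The delicate point is the top computation for the cosyzygy: verifying that $[M]_i$ lies in the radical of its injective envelope, which rests on the fact that each projective-injective summand $[\nu_A(P),P]_i$ has its top concentrated strictly above the support of $[M]_i$. Once this is established, everything else reduces to the explicit injective copresentation of Lemma \ref{lem-injcopres} and the listings of indecomposable projectives, injectives and simples.
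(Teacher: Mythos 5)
Your proof is correct and follows essentially the same route as the paper: identify the injective envelope and projective cover of each indecomposable from Proposition \ref{prop-reptype} via Lemma \ref{lem-injcopres} and the explicit lists of projectives and injectives, then deduce QF-13 from $\domdim A^{(m)}\geq 2$ and Proposition \ref{prop-yamagata-QF13}. The only difference is that you spell out the radical argument showing $\mathrm{top}(\Omega_{A^{(m)}}^-([M]_i))$ sits at level $i+1$, a step the paper dismisses as ``clear from Lemma \ref{lem-injcopres}.''
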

\begin{proof}
It is clear from Lemma \ref{lem-injcopres} that $\Omega_{A^{(m)}}^-([M]_i)$ has projective cover being injective, and injective envelope being projective, as long as $i<m$.

For $X=[M]_i$, Lemma \ref{lem-injcopres} shows that the injective envelop of $[M]_i$ is the projective-injective module $[I_M,\nu_A^-(I_M)]_{i}$ for all $i<m$.
Hence, it has dominant dimension at least 1.
On the other hand, $[M]_{m}$ has injective envelope $[I_M]_{m}$, which is not projective.

Likewise, the projective cover of $[M]_i$ with $i>0$ is the projective-injective module $[\nu_A(P_M),P_M]_{i-1}$, where $P_M$ is the projective cover of $M$ in $\mod A$; whereas $[M]_0$ has a non-injective projective cover $[P_M]_0$.

It follows from the discussion after Theorem \ref{thm-dims} that $A^{(m)}$ is QF-3 under the stated situations; hence, the final claim follows from Proposition \ref{prop-yamagata-QF13}.
\end{proof}
\begin{remark}
In general, $A^{(m)}$ is not a QF-2 algebra.
For example, if $A$ is the Kronecker algebra, then one can see from Example \ref{eg-typeA-Kronecker} (2) that $e_{2m+1}A^{(1)}$ has non-simple socle, namely $S_{2m+2}\oplus S_{2m+2}$.
\end{remark}

\section{Serre-formal algebras and their replicated algebras}\label{sec-Nu-formal}
The aim of this section is to introduce a new class of algebras which we call \emph{Serre-formal}, and give some basic properties.
We can naturally extend the methods and results from the previous section on hereditary algebras to this new class of algebras.

\subsection{Definition and basic properties}\label{subsec-NF-Bkgd}
We assume throughout this section that $A$ is a ring-indecomposable finite dimensional algebra.

Suppose $A$ is Iwanaga-Gorenstein.
Consider the bounded derived category $\Db(\mod A)$ of $A$-modules.
Denote by $\D$ the subcategory $\mathrm{perf}(A)$ given by the perfect complexes.
In this setting, we have endofunctors on $\D$ given by 
\[
\nu:= DA\Lotimes_A- = D\circ \RHom_A(-,A) \quad\text{ and }\quad \nu^{-1}:= \RHom_A(DA,-)
\]
which are quasi-inverses of each other; see \cite{Hap1}.
Note that $\nu$ is the Serre functor of $\D$.

\begin{definition}
An algebra $A$ is \emph{Serre-formal}, or \emph{$\nu$-formal}, if it is Iwanaga-Gorenstein and $\nu^{k}(A)$ is isomorphic to a direct sum of stalk complexes of $A$-module, i.e. $\nu^k(A)\cong \bigoplus_{n\in \Z} H^n(\nu^{k}(A))[-n]$, for all integers $k$, in $\Db(\mod A)$.
\end{definition}

The typical examples of $\nu$-formal algebras are self-injective algebras and hereditary algebras.
The Auslander-Reiten theoretic generalisation of hereditary algebras - $d$-hereditary algebras - are also examples of $\nu$-formal algebra; this can be seen immediately from the following definition.

\begin{definition}\label{def-dhered}
Let $d$ be a non-negative integer.
An algebra $A$ is \emph{$d$-hereditary} \cite{HIO} if $\gldim A=d$ and, for all $i\in \Z$, $\nu^i(A)$ belongs to $\add\bigcup_{k\in \Z}(\mod A)[dk]$.
\end{definition}

\begin{example}\label{eg-alg}
We give two easy examples, as well as a non-example.
\begin{enumerate}[(1)]
\item If $A$ is $\nu$-formal, then so is $A^{\mathrm{op}}$.  This follows from the following commutative diagram \[
\xymatrix@C=60pt{\mathrm{perf}(A)  \ar[r]^{\nu_{\mathrm{perf}(A)}} \ar@{<->}[d]_{D}^{\wr}& \mathrm{perf}(A)\ar@{<->}[d]_{D}^{\wr}\\
\mathrm{perf}(A^{\mathrm{op}}) \ar[r]^{\nu^{-1}_{\mathrm{perf}(A^{\mathrm{op}})}} & \mathrm{perf}(A^{\mathrm{op}}).
}
\]

\item The family of 1-Iwanaga-Gorenstein algebras arising from symmetrizable Cartan matrix introduced in \cite{GLS} are $\nu$-formal.
This follows from combining \cite[Prop 11.6]{GLS} and \cite[Theorem 1.1]{GLS}.
Namely, for such an algebra $A$, any preprojective and preinjective $A$-module (i.e. direct summands of $\tau^{k}(A)$ for some $k\in\mathbb{Z}$) has injective and projective dimension at most 1, which implies that $A$ is $\nu$-formal.
Note that this family contains all the path algebras associated to finite acyclic quivers as well as many other algebras of infinite global dimension.

\item (Non-example) A generalisation, called ``pro-species algebras", of the class of algebras in (2) was introduced  \cite{Kue}; the following example shows that such an algebra is not necessarily $\nu$-formal.

Let $A=KQ/I$ where
\[
Q:= \xymatrix@R=20pt{ 1 \ar[r]^{\alpha} & 2\ar@/^1pc/[r]^{\beta} & 3\ar@/^1pc/[l]^{\gamma} }
\]
and $I$ is generated by $\beta\gamma, \gamma\beta$.
One can check that this is a 1-Iwanaga-Gorenstein algebra; in fact, this is a ``locally self-injective" pro-species algebra (c.f. \cite{Kue}).
The minimal injective coresolution of $e_2A$ is $0\to D(Ae_3) \to D(Ae_1)\to 0$.
So we get that $\nu^{-1}(e_2A) = (e_3A\to e_1A)$, which has non-zero cohomology at both degree 0 and 1; hence, $A$ is not $\nu$-formal.
\end{enumerate}
\end{example}

Note that the class of $\nu$-formal algebras with finite global dimension is still strictly larger than that of $d$-hereditary algebras.
For example, the tensor product algebra of two hereditary algebras is not necessarily $d$-hereditary, but it is $\nu$-formal.
More generally, we have the following.
\begin{proposition}\label{prop-tensor}
If $A,A'$ are $\nu$-formal algebras, then $A\otimes_K A'$ is also $\nu$-formal.
\end{proposition}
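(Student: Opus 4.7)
The plan is to establish a natural isomorphism
$$
\nu_{A \otimes_K A'}(X \otimes_K Y) \cong \nu_A(X) \otimes_K \nu_{A'}(Y)
$$
for $X \in \mathrm{perf}(A)$ and $Y \in \mathrm{perf}(A')$, and then apply it iteratively with $X = A$, $Y = A'$. The hypothesis that both $\nu^k_A(A)$ and $\nu^k_{A'}(A')$ are direct sums of stalk complexes then forces the same conclusion for $\nu^k_{A \otimes A'}(A \otimes_K A')$.

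First, I would verify that $A \otimes_K A'$ is itself Iwanaga-Gorenstein. Tensoring over $K$ the minimal injective coresolutions of $A_A$ and $A'_{A'}$ produces an injective coresolution of $A \otimes_K A'$ of length at most $\idim A_A + \idim A'_{A'}$, since the external tensor product of injectives is injective over $A \otimes_K A'$ (as it is a summand of a module of the form $D(A \otimes_K A')^n$) and $\otimes_K$ is exact. The analogous argument bounds the left self-injective dimension.

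The core step is the compatibility of the two Serre functors. Using the description $\nu_B(-) = D\,\RHom_B(-, B)$ together with the natural isomorphism
$$
\RHom_{A \otimes_K A'}(X \otimes_K Y,\, A \otimes_K A') \cong \RHom_A(X, A) \otimes_K \RHom_{A'}(Y, A'),
$$
which holds for $X \in \mathrm{perf}(A)$ and $Y \in \mathrm{perf}(A')$, and the bimodule identification $D(U \otimes_K V) \cong DU \otimes_K DV$, one obtains the claimed isomorphism $\nu_{A \otimes A'}(X \otimes_K Y) \cong \nu_A(X) \otimes_K \nu_{A'}(Y)$. Iterating this yields
$$
\nu^k_{A \otimes A'}(A \otimes_K A') \cong \nu^k_A(A) \otimes_K \nu^k_{A'}(A')
$$
for every $k \in \Z$. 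By hypothesis, $\nu^k_A(A) \cong \bigoplus_n H^n(\nu^k_A(A))[-n]$ and analogously for $A'$. Since $\otimes_K$ is exact and commutes with direct sums and shifts, tensoring the two decompositions gives
$$
\nu^k(A \otimes_K A') \cong \bigoplus_{n, n' \in \Z} \bigl( H^n(\nu^k_A(A)) \otimes_K H^{n'}(\nu^k_{A'}(A')) \bigr)[-(n + n')],
$$
which is a direct sum of stalk complexes of $(A \otimes_K A')$-modules, as required.

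The main obstacle is proving the $\RHom$ identity rigorously. The cleanest approach is to fix $Y = A'$ and view both sides as exact triangulated functors in $X$ on $\mathrm{perf}(A)$; they agree on the generator $X = A$ (both return $\RHom_{A'}(Y, A')$ viewed as an $(A \otimes A')$-module via the obvious action), so they agree throughout $\mathrm{perf}(A)$, and a symmetric argument handles $Y$. Once this identification is in place, the remainder of the proof is purely formal.
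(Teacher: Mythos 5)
Your proposal is correct and follows essentially the same route as the paper: both rest on the K\"unneth-type compatibility $\nu_{A\otimes_K A'}(X\otimes_K Y)\cong\nu_A(X)\otimes_K\nu_{A'}(Y)$ of Serre functors, followed by induction on the power of $\nu$ and the observation that an external tensor product of stalk complexes is a direct sum of stalk complexes. The one point to tidy is that iterating your isomorphism directly only covers $k\geq 0$, whereas you claim the conclusion for all $k\in\Z$; for $k<0$ you should either run the dual computation with $\nu^{-1}=\RHom_{A\otimes_K A'}(D(A\otimes_K A'),-)$ as the paper does, or note that the negative case follows formally by applying $\nu^{k}$ to $\nu^{-k}(A)\otimes_K\nu^{-k}(A')$ and using that $\nu$ is an autoequivalence of $\mathrm{perf}(A\otimes_K A')$.
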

\begin{proof}
Since $A,A'$ are both Iwanaga-Gorenstein, $A\otimes_K A'$ is also.
We will omit the subscript $K$ on the tensor product for clarity in the following.
For $X\in\mod A$ and $Y\in \mod A'$, we have isomorphisms
\begin{align}
\nu(X\otimes Y) & \cong D(A\otimes A')\Lotimes_{A\otimes A'} (X\otimes Y)  \cong (DA\Lotimes_A X)\otimes DA'\Lotimes_{A'}Y \cong \nu(X) \otimes \nu(Y),\notag 
\end{align}
where the last two $\nu$ are the auto-equivalences on $\mathrm{perf}(A)$ and $\mathrm{perf}(A')$ respectively.
This means that, for all $i\geq 0$, if $\nu^{i}(X)$ and $\nu^{i}(Y)$ are both direct sum of stalk complexes, then so is $\nu^{i}(X\otimes Y)$.
Hence, the required condition on $\nu^{i}(A\otimes A')$ is satisfied by induction on $i\geq 0$.

Similarly, the isomorphisms
\begin{align}
\nu^{-1}(X\otimes Y) & \cong \RHom_{A\otimes A'}(D(A\otimes A'), X\otimes Y)  \cong \RHom_{A\otimes A'}(DA\otimes DA', X\otimes Y) \notag \\
& \cong \RHom_A(DA,X)\otimes \RHom_{A'}(DA',Y) \cong \nu^{-1}(X) \otimes \nu^{-1}(Y)\notag 
\end{align}
yields $\nu^{-i}(A\otimes A')\cong \nu^{-i}(A)\otimes \nu^{-i}(A')$ for all $i\geq 0$, which shows that the required condition on $\nu^{-i}(A\otimes A')$ is satisfied.
\end{proof}

We have particular interest in the modules $\bigoplus_{k\in\Z}\nu^{k}(A)$ for a $\nu$-formal algebra $A$.
If one bears in mind that the toy model for $\nu$-formal algebras are the hereditary algebras, then one can think of them as analogue of preprojective and preinjective modules.

For ease of further exposition, let us fix some notations.
We denote by $\Simp$ the indexing set of (isomorphism classes of) simple $A$-modules.
The indecomposable projective (resp. injective) $A$-module corresponding to $x\in \Simp$ is denoted by $P_x$ (resp. $I_x=\nu_A(P_x)$).

For $x\in \Simp$ and $k\in\Z_{\geq 0}$, we have 
\[
\nu^{-k}(P_x)\in (\mod A)[s_x^-(k)]\quad \text{ and }\quad \nu^{k}(I_x)\in (\mod A)[s_x^+(k)]
\]
for some integers $s_x^{\pm}(k)$.
Note that $s_x^-$ and $s_x^+$ are decreasing and increasing functions on $\Z_{\geq 0}$ respectively with $s_x^-(0)=s_x^+(0)=0$.
We define $s_x^-$ and $s_x^+$ separately as it is more convenient to state our forthcoming results in this way.

We will denote the indecomposable $A$-modules $\nu^{-k}(P_x)[-s_x^-(k)]$ and $\nu^{k}(I_x)[-s_x^+(k)]$ by $P_x^{\succ k}$ and $I_x^{\prec k}$ respectively.
Note that as $\nu$ is an auto-equivalence on $\D=\mathrm{perf}(A)$, these modules have finite projective dimensions. 
By definitions, we have
\begin{align}
& P_x^{\succ k+1} = \nu^{-(k+1)}P_x[-s_x^-(k+1)] \cong \nu^{-1}P_x^{\succ k}[s_x^-(k)-s_x^-(k+1)],\notag \\
\text{and }\quad & I_x^{\prec k+1} = \nu^{k+1}I_i[-s_x^+(k+1)] \cong \nu I_x^{\prec k}[s_x^+(k)-s_x^+(k+1)].\notag 
\end{align}
Our notations are chosen so that for a hereditary algebra $A$, $P_x^{\succ k+1}$ lies on the right hand side of $P_x^{\succ k}$ on the Auslander-Reiten quiver of $\D$.

\begin{lemma}\label{lem-vanish}
Let $A$ be a $\nu$-formal algebra and fix $x\in \mathcal{S}$.
Then the following holds for all $k\in \Z_{\geq 0}$.
\begin{itemize}
\item[(1)] $\Ext_A^p(DA,P_x^{\succ k})\neq 0$ is equivalent to $p=s_x^-(k)-s_x^-(k+1)$;

\item[(1')] $\Ext_A^p(I_x^{\prec k},A)\neq 0$ is equivalent to $\mathrm{Tor}_p^A(DA,I_x^{\prec k})\neq 0$, and also to $p=s_x^+(k+1)-s_x^+(k)$;

\item[(2)] $\idim_{A} P_x^{\succ k} = s_x^-(k)-s_x^-(k+1) = \pdim_{A} P_x^{\succ k+1}$;
\item[(2')] $\pdim_{A} I_x^{\prec k} = s_x^+(k+1)-s_x^+(k) = \idim_{A} I_x^{\prec k+1}$;
\end{itemize}
In particular, $P_x^{\succ k}$ is injective (resp. projective) if, and only if, $s_x^-(k)=s_x^-(k+1)$ (resp. $s_x^-(k)=s_x^-(k-1)$).
Dually, $I_x^{\prec k}$ is projective (resp. injective) if, and only if, $s_x^+(k)=s_x^+(k+1)$ (resp. $s_x^+(k)=s_x^+(k-1)$).
\end{lemma}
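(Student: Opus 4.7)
The plan is to exploit $\nu$-formality to reduce every assertion to a single stalk-complex computation. From the definitions, I would directly compute
\[
\nu^{-1}(P_x^{\succ k}) = \nu^{-(k+1)}(P_x)[-s_x^-(k)] = P_x^{\succ k+1}[-(s_x^-(k)-s_x^-(k+1))],
\]
which, by $\nu$-formality, is a stalk complex in cohomological degree $q := s_x^-(k)-s_x^-(k+1) \geq 0$. Since $\Ext_A^p(DA, P_x^{\succ k}) = H^p(\RHom_A(DA, P_x^{\succ k})) = H^p(\nu^{-1}(P_x^{\succ k}))$, assertion (1) drops out immediately, with $\Ext_A^q(DA, P_x^{\succ k}) \cong P_x^{\succ k+1}$.

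For (2), the inequality $\idim_A P_x^{\succ k} \geq q$ is immediate from (1). For the reverse, since $P_x^{\succ k}$ has finite injective dimension by Lemma \ref{lem-IGfacts}(1), I would take a minimal injective coresolution $I^\bullet$ of $P_x^{\succ k}$ of length $d := \idim_A P_x^{\succ k}$. Applying the Nakayama equivalence $\nu^{-1} : \inj A \xrightarrow{\sim} \proj A$ termwise yields a complex $\nu^{-1}(I^\bullet)$ of projectives in degrees $[0, d]$ whose differentials lie in the radical (additive equivalences preserve radicals) and whose total cohomology is concentrated in degree $q$ by the first paragraph. If $d > q$, the final differential $\nu^{-1}(I^{d-1}) \to \nu^{-1}(I^d)$ would have to be surjective onto the non-zero projective $\nu^{-1}(I^d)$, which is impossible for a radical morphism between projectives. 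Hence $d = q$, and the shift $\nu^{-1}(I^\bullet)[q]$ becomes a minimal projective resolution of $P_x^{\succ k+1}$ of length $q$, giving also $\pdim_A P_x^{\succ k+1} = q$.

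Parts (1') and (2') will follow by the dual argument, computing $\nu(I_x^{\prec k}) = I_x^{\prec k+1}[-(s_x^+(k+1) - s_x^+(k))]$ and invoking the identifications $\mathrm{Tor}_p^A(DA, -) = H^{-p}(\nu(-))$ together with $\Ext_A^p(-, A) \cong D\mathrm{Tor}_p^A(DA, -)$; the latter simultaneously gives the equivalence of the two non-vanishing conditions in (1'). The concluding characterization then reads off from (2) and (2') by specializing to $\idim = 0$ or $\pdim = 0$: for instance, $P_x^{\succ k}$ being projective is equivalent to $\pdim_A P_x^{\succ k} = 0$, which by (2) applied with $k-1$ in place of $k$ is equivalent to $s_x^-(k-1) = s_x^-(k)$.

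The hard part will be the minimality step in the second paragraph—precisely, the claim that a radical morphism between non-zero projectives cannot be surjective. I would justify this by noting that any surjection induces a surjection on simple tops $P/\rad P \to P'/\rad P'$, whereas each component $P_i \to P'_j$ being radical forces the induced map on tops to vanish (either because $P_i \not\cong P'_j$ forces $\Hom(P_i, P'_j/\rad P'_j) = 0$, or because an indecomposable radical endomorphism lands in the radical), contradicting non-vanishing of $P'/\rad P'$.
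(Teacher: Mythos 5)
Your proof is correct, and for parts (1), (1'), and the concluding characterisations it coincides with the paper's argument (read off the cohomology of the stalk complex $\nu^{\mp 1}(-)$, plus the duality $D\Ext_A^p(-,A)\cong\mathrm{Tor}_p^A(DA,-)$). For (2) both you and the paper transport the minimal injective coresolution of $P_x^{\succ k}$ through the Nakayama equivalence $\inj A\xrightarrow{\sim}\proj A$ to produce the minimal projective resolution of $P_x^{\succ k+1}$; the difference lies in how the length of that coresolution is pinned down. The paper runs an induction on $k$: finiteness of $\pdim_A P_x^{\succ k}$ (carried along by the induction) plus Iwanaga--Gorensteinness yields $\idim_A P_x^{\succ k}<\infty$, and then the unique non-vanishing degree of $\Ext_A^\ast(DA,-)$ from (1) forces the value, implicitly using that $\Ext_A^{d}(DA,M)\neq 0$ whenever $\idim_A M=d<\infty$. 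You instead obtain finiteness in one stroke from $P_x^{\succ k}\in\mathrm{perf}(A)$ --- do make that input explicit, since Lemma \ref{lem-IGfacts}(1) only converts one finite dimension into the other, and the finiteness of $\pdim$ comes from $\nu$ being an autoequivalence of $\mathrm{perf}(A)$, as the paper notes just before the lemma --- and you determine the length by the elementary observation that a radical morphism between non-zero projectives cannot be surjective (Nakayama). Your route avoids the induction entirely and replaces the ``top Ext against $DA$'' step by a minimality argument on the transported complex; both are sound, and yours is somewhat more self-contained.
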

\begin{proof}
Since $\Ext_A^p(DA,P_x^{\succ k})=H^p(\RHom_A(DA,P_x^{\succ k}))=H^p(\nu^{-1}(P_x^{\succ k}))$ for all $p\in\Z$, $\nu$-formality of $A$ implies that $\Ext_A^p(DA,P_x^{\succ k})$ vanishes for all indices but $p=s_x^-(k)-s_x^-(k+1)$.
This gives us (1).
(1') can be proved similarly by considering the cohomology of $DA\Lotimes_A-$, and the functorial isomorphism $D\Ext_A^p(-,A)\cong \mathrm{Tor}_p^A(DA,-)$.

We prove (2) by induction on $k$.
The case of $k=0$ is trivial.

Suppose the claim holds for $P_x^{\succ k}$ for some $k\geq 0$.
If $P_x^{\succ k}$ is injective, then $\nu^{-1}(P_x^{\succ k})=\nu_A^-(P_x^{\succ k})\in \proj A$ and we are done.

Thus, we can assume $P_x^{\succ k}$ is non-injective.
Since $A$ is Iwanaga-Gorenstein, it follows from Lemma \ref{lem-IGfacts} that finiteness of $\pdim_AP_x^{\succ k}$ implies that of $\idim_A P_x^{\succ k}$.
It now suffices to prove that $\pdim_A P_x^{\succ k}<\infty$, as the vanishing of the Ext-spaces forces $\idim_A P_x^{\succ k}=s_x^-(k)-s_x^-(k+1)$.

By induction hypothesis $P_x^{\succ k}$ has a minimal injective coresolution 
\[
0\to P_x^{\succ k} \to I^0 \to I^1 \to \cdots  \to I^p\to 0,
\]
where $p=s_x^-(k)-s_x^-(k+1)$.
Applying $\nu_A^{-}=\Hom_A(DA,-)$ yields a sequence
\[
0 \to \nu_A^-(P_x^{\succ k}) \to \nu_A^-(I^0) \to \nu_A^-(I^1) \to \cdots \to \nu_A^-(I^p) \to M \to 0,
\]
where $M=\mathrm{Coker}(\nu_A^-(f))$ and $f$ is the differential $I^{p-1}\to I^p$.
By construction, we have $M=H^p(\nu^{-1}(P_x^{\succ k}))$, which means that $M\cong P_x^{\succ k+1}$ and $p=s_x^-(k)-s_x^-(k+1)$.
It now follows from (1) that the sequence above is exact.
In particular, it is the minimal projective resolution of $P_x^{\succ k+1}$.
Hence, we have $\pdim_A P_x^{\succ k+1} = s_x^-(k)-s_x^-(k+1)$ as required.

(2') can be proved analogously, which we leave the reader to check.
\end{proof}

\begin{example}
When $A$ is $d$-hereditary, an indecomposable $A$-module $M$ is called $d$-preprojective (resp. $d$-preinjective) if $M=\nu^{k}(P_x)[-dk]$ for some integer $k\leq 0$ (resp. $k\geq 1$).
Moreover, we have
\begin{align}
\idim_A P_x^{\succ k} &= s_x^-(k)-s_x^-(k+1)=\begin{cases} d, & \text{if $P_x^{\succ k}$ is non-injective;} \\ 0, & \text{otherwise.}\end{cases}\notag \\
\pdim_A I_x^{\prec k} &= s_x^+(k+1)-s_x^+(k)=\begin{cases} d, & \text{if $I_x^{\prec k}$ is non-injective;} \\ 0, & \text{otherwise.}\end{cases}\notag 
\end{align}
\end{example}

Having Theorem \ref{thm-dims} in mind, it is natural to ask what subclass of the $\nu$-formal algebras play the analogous role of representation-finite hereditary algebras.
\begin{definition}
Let $A$ be a $\nu$-formal algebra and $x\in \Simp$.
Define
\[
\ell_x := \inf \{ k\geq 0 \mid P_x^{\succ k}\in \add A \} = \inf\{ k\geq 1 \mid P_x^{\succ k-1}\in \add DA\}.
\]
We say that $A$ is \emph{periodically $\nu$-formal} if $\ell_x<\infty$ for all $x\in \Simp$; otherwise,  \emph{aperiodically $\nu$-formal}.
In the former case, we define $\sigma$ to be the permutation on $\Simp$ so that $P_x^{\succ \ell_x}\cong P_{\sigma(x)}$, and $k_A$ to be its order.
\end{definition}

By Lemma \ref{lem-vanish}, $A$ being periodically $\nu$-formal is the same as the existence of $k_x\in \Z_{\geq 0}$ so that $s_x^-(k_x)=s_x^-(k_x+1)$ (or $s_x^+(k_x)=s_x^+(k_x+1)$) for all $x\in \Simp$.
Note also that $\ell_x$ can also be defined in terms of $I_x^{\prec k}$ instead, namely
\[
\ell_x = \inf \{ k\geq 0\mid I_{\sigma(x)}^{\prec k}\in \add DA \} = \inf\{ k\geq 1\mid I_{\sigma(x)}^{\prec k-1}\in \add A\}.
\]

Recall that if $\phi$ is an automorphism on $A$, then there is a natural auto-equivalence $\phi^*$ on $\mod A$ (hence, on $\mathrm{perf}(A)$) given by twisting the $A$-action on the modules by $\phi$.

\begin{definition}
Following \cite{HerIya}, an algebra $A$ is said to be \emph{twisted fractionally Calabi-Yau} if there exist integers $n$ and $m\neq 0$, as well as an automorphism $\phi$ of $A$, so that there is a natural isomorphism of auto-equivalences $\nu^m\simeq [n]\circ \phi^*$ on $\mathrm{perf}(A)$.
It follows from \cite[Prop 4.3]{HerIya} that this is equivalent to $\nu^m(A)\cong A[n]$ in $\Db(\mod A)$.
We say that the \emph{twisted Calabi-Yau dimension} of $A$ is $(n,m)$ if
$m$ is the minimal positive integer so that $\nu^m(A)\cong A[n]$ in $\mathrm{perf}(A)$.
\end{definition}

The following result generalises \cite[Theorem 1.1]{HerIya}.

\begin{proposition}\label{prop-twistedCY}
Suppose $A$ is a $\nu$-formal algebra.
Then $A$ is periodically $\nu$-formal if, and only if, $A$ is twisted fractionally Calabi-Yau.
\end{proposition}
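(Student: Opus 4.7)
The plan is to handle the two implications separately; the forward direction (``periodically $\nu$-formal $\Rightarrow$ twisted fractionally Calabi-Yau'') is the substantial one.

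For the easy direction, suppose $\nu^m(A)\cong A[n]$ for some $m\neq 0$; replacing $m$ by $-m$ if necessary, we may assume $m>0$, so that $\nu^{-m}(A)\cong A[-n]$ in $\Db(\mod A)$. Decomposing into indecomposable summands and using $\nu$-formality, one obtains $\nu^{-m}(P_x)\cong P_{\sigma(x)}[-n]$ for some permutation $\sigma$ of $\Simp$. In particular $P_x^{\succ m}\cong P_{\sigma(x)}$ is projective for each $x$, so $\ell_x\leq m<\infty$ for every $x\in\Simp$.

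For the forward direction, assume $A$ is periodically $\nu$-formal, so that $\nu^{-\ell_x}(P_x)\cong P_{\sigma(x)}[s_x^-(\ell_x)]$ holds in $\mathrm{perf}(A)$. The first step is to iterate this isomorphism around the $\sigma$-orbit of $x$ of size $o_x$, which yields
\[
\nu^{-L_x}(P_x)\cong P_x[T_x],\qquad L_x:=\sum_{i=0}^{o_x-1}\ell_{\sigma^i(x)},\quad T_x:=\sum_{i=0}^{o_x-1}s_{\sigma^i(x)}^-(\ell_{\sigma^i(x)}).
\]
The rational number $\mu_x:=T_x/L_x$ is a $\sigma$-orbit invariant by construction. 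Setting $L:=\mathrm{lcm}\{L_x\mid x\in\Simp\}$, one has $\nu^{-L}(P_x)\cong P_x[L\mu_x]$ for every $x$. If one can show that $\mu_x$ takes the same value $\mu\in\mathbb{Q}$ for every $x\in\Simp$, then $L\mu=(L/L_x)T_x$ is an integer and $\nu^{-L}(A)\cong A[L\mu]$ in $\Db(\mod A)$, which is exactly the twisted fractional Calabi-Yau condition.

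The crucial step---and the main obstacle---is to show that $\mu_x$ is independent of $x$, where ring-indecomposability of $A$ must be exploited. The plan is: for any pair $x,y$ adjacent in the Gabriel quiver of $A$, at least one of $\Hom_A(P_x,P_y)$ or $\Hom_A(P_y,P_x)$ is non-zero; assume the latter. Applying the auto-equivalence $\nu^{-rL}$ for any $r\in\mathbb{Z}$ preserves non-vanishing, giving
\[
0\neq\Hom_{\Db(\mod A)}\bigl(P_y[rL\mu_y],P_x[rL\mu_x]\bigr)=\Ext_A^{rL(\mu_x-\mu_y)}(P_y,P_x).
\]
Since $A$ is Iwanaga-Gorenstein, $\Ext_A^j(P_y,P_x)=0$ for $j<0$ and for $j>\idim A_A<\infty$, so as $r$ ranges over $\mathbb{Z}$ the shift $rL(\mu_x-\mu_y)$ must remain bounded, forcing $\mu_x=\mu_y$. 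Ring-indecomposability of $A$ means the Gabriel quiver is connected, so the equation $\mu_x=\mu_y$ propagates through adjacent simples to yield a common value $\mu$ for all $x\in\Simp$, completing the proof.
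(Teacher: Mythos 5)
Your proposal is correct and follows essentially the same route as the paper's proof: iterate $\nu^{-\ell_x}$ around the $\sigma$-orbit to get $\nu^{-L_x}(P_x)\cong P_x[T_x]$, show the ratio $T_x/L_x$ is constant using non-vanishing of Hom between projectives transported by a common power of $\nu$ together with connectedness of the quiver, and then pass to the least common multiple of the periods. The only cosmetic difference is that the paper applies $\nu^{-h_xh_y}$ once and uses that $\Hom_{\D}(P_x[a],P_y[b])$ vanishes unless $a=b$, whereas you let the exponent $r$ vary and invoke boundedness of the Ext-range; both arguments are valid.
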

\begin{proof}
The if-part follows from the definition of twisted fractionally Calabi-Yau.
We concentrate on showing the converse direction in the rest.

Let us first define $h_x:= \ell_x + \ell_{\sigma(x)} + \cdots +\ell_{\sigma^{k_A-1}(x)}$ and $p_x:=s_x^-(h_x)$.
We claim that the rational number $p_x/h_x$ is independent of $x\in \Simp$.
Indeed, this follows from a simple modification of the argument in \cite[subsection 4.1]{HerIya}, which we explain below for completeness.

Suppose $x,y\in \Simp$ are chosen so that $\Hom_A(P_x,P_y)\neq 0$.
Since 
\[
\nu^{-h_x}(P_x) \cong P_{\sigma^{k_A}(x)}[s_x^-(h_x)] = P_x[p_x]
\]
by definition, we get that
\[
0 \neq \Hom_{\D}\big(\nu^{-h_yh_x}(P_x),\nu^{-h_xh_y}(P_y)\big) \cong \Hom_{\D}(P_x[h_yp_x],P_y[h_xp_y]).
\]
This implies that $h_yp_x = h_xp_y$.
It follows from the assumption of $A$ being ring-indecomposable that $p_x/h_x=p_y/h_y$ for all $x,y\in\Simp$.

Take $m$ to be the least common multiple of $h_x$'s over all $x\in\Simp$, then it suffices to show that $s_x^-(m)$ is independent of $x\in\Simp$ to finish the proof.
Indeed, since the isomorphism $\nu^{-m}(P_x)=\nu^{-(m/h_x)h_x}(P_x)\cong P_x[(m/h_x)p_x]$ holds for all $x\in \Simp$, we have $s_x^-(m)=mp_x/h_x$ for all $i\in\Simp$.
It now follows from the above claim on $p_x/h_x$ that $s:=s_x^-(m)$ is independent of $x\in\Simp$.
Hence, we have $\nu^{-m}(A)\cong A[s]$.
\end{proof}

Suppose $A$ is periodically $\nu$-formal.
We denote by $(h_A,-s_A)$ the twisted Calabi-Yau dimension of $A$.
Note that since $\nu$ is an auto-equivalence, $\nu^{-k}(A)\cong A[s]$ is equivalent to $\nu^k(A)\cong A[-s]$, so $h_A$ is equal to the minimal positive integer $k$ so that $\nu^{-k}(A)\cong A[s]$, and $s_A=s_x^-(h_A)=-s_x^+(h_A)$ for any $x\in\Simp$.

\begin{example}
If $A=kQ$ is representation-finite hereditary of Dynkin type $\Delta$, then $\sigma$ is the permutation on $Q_0$ induced by the graph automorphism $\nu_\Delta$ in the notation of the previous section.
This implies that $k_A=1$ or $k_A=2$, and the integer $m$ in the proof of Proposition \ref{prop-twistedCY} is $k_Ah_\Delta/2$.
If the orientation of $Q$ is stable under $\nu_\Delta$, then the twisted Calabi-Yau dimension is $(h_A,-s_A)=(h_\Delta/2,h_\Delta/2-1)$; otherwise, $(h_A,-s_A)=(h_\Delta,h_\Delta-2)$.
In particular, both $m$ and $h_A$ are not necessarily the Coxeter number associated to $A$.
\end{example}

We also remark that for periodically $\nu$-formal algebras $A$, there is no formula to express $k_A$, or even for the upper bound of $k_A$, in terms of $\idim A$ when $A$ is not hereditary.
For example, consider the algebra $A:= K\vec\DynA_n/I$, where $I$ is generated by the unique path of length $n-1$.
This is a 2-hereditary (and 2-representation-finite, see Subsection \ref{subsec-NF-naka}) algebra where $\sigma$ has order $k_A=n$ if $n$ odd; otherwise, $k_A=n/2$.

\subsection{Homological dimensions}\label{subsec-NF-dims}
We are now going to state the $\nu$-formal analogue of the results and their ``dual" of the previous section.
We will not give detailed proofs because they are exactly the same as in the hereditary case, but we will explain what extra ingredients and tweaks are needed in order to copy the proof directly.
The proof for the dual results (projective resolutions and projective dimensions of modules) will be completely omitted.
Moreover, our result in the previous section says that appearance of higher Auslander algebras as replicated algebras is equivalent to the base algebra being of Dynkin type.
The second aim of this section is to explain why the analogous result also holds for $\nu$-formal base algebras, i.e. the appearance of minimal Auslander-Gorenstein algebras as replicated algebras is equivalent to the base algebra being periodically $\nu$-formal.

Throughout, we assume $A$ is an indecomposable $\nu$-formal algebra with $\Simp$ indexing the simple $A$-modules.
Let us begin with the following analogue of Lemma \ref{lem-injcopres}.

\begin{lemma}\label{lem-injcopres2}
Let $m$ be a positive integer, $x\in \Simp$, and $k\geq 0$.
\begin{enumerate}
\item For any $0\leq i< m$, we have an exact sequence of $A^{(m)}$-modules:
\begin{align}
\xi(x,k,i):\qquad  0 \to [P_x^{\succ k}]_i \to I^0 \to I^1 \to \cdots \to I^p \to [P_x^{\succ k+1}]_{i+1} \to 0,\notag
\end{align}
where $p=s_x^-(k)-s_x^-(k+1)$ and the middle $p+1$ terms are projective-injective.
This sequence is a truncation of the minimal injective coresolution of $[P_x^{\succ k}]_i$ in $\mod A^{(m)}$.

\item For any $0< i\leq  m$, we have an exact sequence of $A^{(m)}$-modules:
\begin{align}
\eta(x,k,i):\qquad  0 \to [I_x^{\prec k+1}]_{i-1} \to P_r \to \cdots \to P_1 \to P_0 \to [I_x^{\prec k}]_{i} \to 0,\notag
\end{align}
where $r=s_x^+(k+1)-s_x^+(k)$ and the middle $r+1$ terms are projective-injective.
This sequence is a truncation of the minimal projective coresolution of $[I_x^{\prec k}]_i$ in $\mod A^{(m)}$.
\end{enumerate}
\end{lemma}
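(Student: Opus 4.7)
The plan is to treat part (1) in detail; part (2) will follow by a dual argument using projective covers in place of injective envelopes and Lemma \ref{lem-vanish}(1')(2') in place of (1)(2).

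First I would take the minimal injective coresolution of $P_x^{\succ k}$ in $\mod A$. By Lemma \ref{lem-vanish}(2), this has length exactly $p=s_x^-(k)-s_x^-(k+1)$, giving an exact sequence
\[
0\to P_x^{\succ k}\xrightarrow{\iota} I^0\xrightarrow{d^0} I^1\xrightarrow{d^1}\cdots\xrightarrow{d^{p-1}} I^p\to 0
\]
in $\mod A$, with each $I^j$ injective. Applying $\nu_A^{-}=\Hom_A(DA,-)$ and using Lemma \ref{lem-vanish}(1)(2) (which tell us that $\Ext_A^j(DA,P_x^{\succ k})$ vanishes for $0\le j<p$ and equals $P_x^{\succ k+1}$ for $j=p$), we obtain a minimal projective resolution
\[
0\to \nu_A^{-}(I^0)\xrightarrow{\nu_A^{-}(d^0)}\nu_A^{-}(I^1)\to\cdots\to\nu_A^{-}(I^p)\xrightarrow{\pi} P_x^{\succ k+1}\to 0.
\]

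Next I would assemble these two sequences into a complex in $\mod A^{(m)}$ whose $j$-th term (for $0\le j\le p$) is $[I^j,\nu_A^-(I^j)]_i$, with differentials defined on the $i$-th and $(i+1)$-th coordinates by $d^j$ and $\nu_A^-(d^j)$ respectively (the compatibility condition is trivial since all $f$-data vanish except between coordinates $i$ and $i+1$). By Observation \ref{eg-modules}(2), each $[I^j,\nu_A^-(I^j)]_i$ is projective-injective for $0\le i<m$. Exactness at the left end follows because the kernel in the $i$-th coordinate is $P_x^{\succ k}=\ker d^0$, while on the $(i+1)$-th coordinate $\ker \nu_A^-(d^0)=\nu_A^-(P_x^{\succ k})=\Ext_A^0(DA,P_x^{\succ k})$ vanishes (by Lemma \ref{lem-vanish}(1), assuming $p>0$; the case $p=0$ is immediate). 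Exactness in the middle is checked coordinatewise: the $i$-th coordinate is a subcomplex of an exact injective coresolution, while the $(i+1)$-th coordinate has cohomologies $\Ext_A^j(DA,P_x^{\succ k})=0$ for $0<j<p$. Finally, the cokernel on the $i$-th coordinate is zero, while the cokernel on the $(i+1)$-th coordinate is $P_x^{\succ k+1}$, producing the claimed module $[P_x^{\succ k+1}]_{i+1}$ on the right end.

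It remains to see that this is a truncation of the \emph{minimal} injective coresolution of $[P_x^{\succ k}]_i$ in $\mod A^{(m)}$. By Observation \ref{eg-modules}(3), the injective envelope of $[P_x^{\succ k}]_i$ is $[I^0,\nu_A^-(I^0)]_i$, giving the base case. I would then proceed by induction: the $j$-th cosyzygy arising from the construction has the form $(M_\ell,f_\ell)_\ell$ supported on coordinates $i$ and $i+1$, with $M_i=\Omega_A^{-j}(P_x^{\succ k})$ and $M_{i+1}=\nu_A^-(\Omega_A^{-j-1}\cdots)$-type data; I would describe it explicitly as a pushout/pullback and verify that its injective envelope in $\mod A^{(m)}$ is precisely $[I^{j+1},\nu_A^-(I^{j+1})]_i$ by using Observation \ref{eg-modules}(3) together with the fact that $I^{j+1}$ is the injective envelope of $\Omega_A^{-j}(P_x^{\succ k})$ by minimality of the original $A$-coresolution.

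The main technical obstacle, as in the hereditary case, will be verifying the exactness claim on the $(i+1)$-th coordinate and the identification of the right-hand cokernel with $[P_x^{\succ k+1}]_{i+1}$; both boil down to using the vanishing of the Ext-groups in Lemma \ref{lem-vanish}(1) in the precisely correct degree range, and to the observation (already implicit in the proof of Lemma \ref{lem-vanish}(2)) that the last piece of $\nu_A^-(I^\bullet)$ has cokernel $P_x^{\succ k+1}$. Once these identifications are made, the statement on minimality is forced by the uniqueness of minimal injective coresolutions together with the inductive identification of injective envelopes. Part (2) is obtained by running the entirely dual argument, starting from the minimal projective resolution of $I_x^{\prec k}$ (which has length $r=s_x^+(k+1)-s_x^+(k)$ by Lemma \ref{lem-vanish}(2')) and applying $\nu_A$ to lift it into $\mod A^{(m)}$.
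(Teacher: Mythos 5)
Your proposal is correct and follows essentially the same route as the paper: lift the minimal injective coresolution of $P_x^{\succ k}$ in $\mod A$ (of length $p$ by Lemma \ref{lem-vanish}(2)) to the projective-injective $A^{(m)}$-modules $[I^j,\nu_A^-(I^j)]_i$, and verify exactness coordinatewise via the Ext-vanishing of Lemma \ref{lem-vanish}(1) together with the identification of the top cohomology of $\nu^{-1}(P_x^{\succ k})$ with $P_x^{\succ k+1}$. Your additional inductive verification of minimality via Observation \ref{eg-modules}(3) is a sound elaboration of a point the paper leaves implicit.
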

\begin{proof}
As in the proof of Lemma \ref{lem-injcopres}, we take the minimal injective coresolution of $P_x^{\succ k}$ in $\mod A$, say, $0\to P_x^{\succ k}\to \overline{I}^0 \to \cdots \to \overline{I}^p\to 0$ where $p=s_x^-(k)-s_x^-(k+1)$, and take $I^j := [\overline{I}^j,\nu_A^-(\overline{I}^j)]_i$ for all $0\leq j\leq p$.
The exactness at $[P_x^{\succ k}]_i$ and at $I^j$ for $0<j<p$ follows from Lemma \ref{lem-vanish} (1).
The exactness at the last two non-zero terms follows from the fact that $P_x^{\succ k+1}\cong \nu^{-1}(P_x^{\succ k})[s_x^-(k)-s_x^-(k+1)]$.
\end{proof}

Similarly, we can define the analogues $\xi(x,k,m)$ of (\ref{eq-seq3}), which will be an exact sequence containing $s_x^-(k)-s_x^-(k+1)$ terms of injective non-projective $A^{(m)}$-modules.
The minimal injective coresolution of the $A^{(m)}$-module $[P_x^{\succ k}]_i$ can now be obtained by iteratively taking the Yoneda products of 
\[
\xi(x,k,i), \xi(x,k+1,i+1), \cdots, \xi(x,k+m-1-i,m-1), \xi(x,k+m-i,m).
\]

Dually, we can define an exact sequence $\eta(x,k,0)$ which has $s_x^+(k+1)-s_x^+(k)+1$ terms of projective non-injective $A^{(m)}$-modules, and the minimal projective resolution of the $A^{(m)}$-module $[I_x^{\prec k}]_i$ can be obtained by iteratively taking the Yoneda products of
\[
\eta(x,k+i,0), \eta(x,k+i-1,1), \cdots, \eta(x,k+1,i-1), \eta(x,k,i).
\]

This gives the following analogue of Proposition \ref{prop-dim}; note that $s_x^-(k)$ here is equal to $s_{P_x}(k)-k$ in the notation of the previous section.

\begin{proposition}\label{prop-dim2}
Let $m$ be a positive integer, $x\in \Simp$, and $k\geq 0$.
Then we have the following formulae for all $0\leq i\leq m$.
\begin{align}
\domdim_{A^{(m)}}[P_x^{\succ k}]_i &= m-i+s_x^-(k)-s_x^-(k+m-i), \notag\\
\idim_{A^{(m)}}[P_x^{\succ k}]_i &= m-i+s_x^-(k)-s_x^-(k+m-i+1), \notag \\
\codomdim_{A^{(m)}}[I_x^{\prec k}]_i &= i+s_x^+(k+i)-s_x^+(k), \notag\\
\pdim_{A^{(m)}}[I_x^{\prec k}]_i &= i+s_x^+(k+i+1)-s_x^+(k). \notag 
\end{align}
\end{proposition}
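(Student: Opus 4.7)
The plan is to mirror the argument used for Proposition \ref{prop-dim}, replacing the two-term injective coresolutions available in the hereditary case with the longer sequences $\xi(x,k,i)$ and $\eta(x,k,i)$ of Lemma \ref{lem-injcopres2}. I will prove the first two formulae first; the dual statements for $[I_x^{\prec k}]_i$ are obtained by a symmetric argument that I only sketch.

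First I would construct the minimal injective coresolution of $[P_x^{\succ k}]_i$ as an iterated Yoneda product of the sequences
\[
\xi(x,k,i),\ \xi(x,k+1,i+1),\ \ldots,\ \xi(x,k+m-i-1,m-1),\ \xi(x,k+m-i,m),
\]
glued at their common boundary terms $[P_x^{\succ k+j+1}]_{i+j+1}$. Lemma \ref{lem-injcopres2}(1) provides the first $m-i$ pieces and tells us that each of their middle terms lies in $\add [DA,A]_{i+j}$, hence is projective-injective in $\mod A^{(m)}$. The last sequence $\xi(x,k+m-i,m)$ is the analogue of (\ref{eq-seq3}): it has the shape
\[
0\to [P_x^{\succ k+m-i}]_m\to [I^0]_m\to\cdots\to [I^q]_m\to 0,
\]
where $I^0\to\cdots\to I^q$ is the minimal injective coresolution of $P_x^{\succ k+m-i}$ in $\mod A$ and $q=s_x^-(k+m-i)-s_x^-(k+m-i+1)$; exactness follows from Lemma \ref{lem-vanish}(1) exactly as in the proof of Lemma \ref{lem-injcopres2}. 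Minimality of the glued coresolution follows because each piece is minimal by construction and the connecting module $[P_x^{\succ k+j+1}]_{i+j+1}$ has no projective-injective summand, so no cancellation occurs at the splice.

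Next I would do a bookkeeping of the middle terms. The first $m-i$ pieces contribute a total of
\[
\sum_{j=0}^{m-i-1}\bigl(s_x^-(k+j)-s_x^-(k+j+1)+1\bigr)=(m-i)+s_x^-(k)-s_x^-(k+m-i)
\]
projective-injective terms, all appearing consecutively at the start of the coresolution. The final piece $\xi(x,k+m-i,m)$ then contributes $s_x^-(k+m-i)-s_x^-(k+m-i+1)+1$ further terms; these are injective but never projective in $\mod A^{(m)}$, because, by the explicit description of $\proj A^{(m)}$ and $\inj A^{(m)}$ recalled after Observation \ref{eg-modules}, a module of the form $[M]_m$ with $M\ne0$ is never projective. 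Therefore the initial run of projective-injective terms ends precisely after $(m-i)+s_x^-(k)-s_x^-(k+m-i)$ steps, giving the asserted value of $\domdim_{A^{(m)}}[P_x^{\succ k}]_i$; and the total length of the coresolution is
\[
\sum_{j=0}^{m-i}\bigl(s_x^-(k+j)-s_x^-(k+j+1)+1\bigr)-1=(m-i)+s_x^-(k)-s_x^-(k+m-i+1),
\]
yielding the formula for $\idim_{A^{(m)}}[P_x^{\succ k}]_i$.

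The formulae for $\codomdim$ and $\pdim$ of $[I_x^{\prec k}]_i$ are obtained by the dual procedure: one splices the sequences $\eta(x,k+i,0),\eta(x,k+i-1,1),\ldots,\eta(x,k,i)$ of Lemma \ref{lem-injcopres2}(2) together with the boundary analogue $\eta(x,k+i,0)$ whose first $r+1$ terms are projective but not injective in $\mod A^{(m)}$ (since $[M]_0$ is never injective for $M\neq 0$), and the middle terms of the remaining $\eta$'s are projective-injective. The same counting then gives the two asserted formulae. The only point requiring care, and the closest thing to an obstacle, is to verify that the iterated Yoneda product of the $\xi(x,k+j,i+j)$ is actually the minimal injective coresolution (and dually for $\eta$): this is where one must check that the connecting indecomposables $[P_x^{\succ k+j+1}]_{i+j+1}$ are distinct from the projective-injective middle terms of the adjacent pieces, which follows from the explicit shape of $\proj A^{(m)}\cap \inj A^{(m)}$.
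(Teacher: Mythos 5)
Your proof is correct and follows essentially the same route as the paper: splice the sequences $\xi(x,k,i),\ldots,\xi(x,k+m-i,m)$ (resp.\ the $\eta$'s) into the minimal injective coresolution (resp.\ projective resolution), count that each of the first $m-i$ pieces contributes $s_x^-(k+j)-s_x^-(k+j+1)+1$ projective-injective terms while the terminal piece at position $m$ contributes only injective non-projective terms, and read off the two telescoping sums. The paper's proof is just a terser version of this bookkeeping (it does not spell out the minimality check you include), so there is nothing to add.
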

\begin{proof}
We prove the first two formulae; the later two are proved dually.

For each $0\leq j\leq  m-i$, the sequence $\xi(x,k+j,i+j)$ contributes precisely $s_x^-(k+j)-s_x^-(k+j+1)+1$ injective terms in the minimal injective coresolution of $[P_x^{\succ k}]_i$.
Therefore, the injective dimension is given by 
\[
-1+\sum_{j=0}^{m-i}\left( s_x^-(k+j)-s_x^-(k+j+1)+1\right) = m-i + s_x^-(k) - s_x^-(k+m-i+1).
\]
For each $j<m-i$, all of the middle terms in $\xi(P,k+j,i+j)$ are projective(-injective); whereas for the case when $j=m-i$, none of them are projective.
Therefore, the dominant dimension is just $\idim_{A^{(m)}} [P_x^{\succ k}]_i-(s_x^-(k+m-i)-s_x^-(k+m-i+1))$.
\end{proof}

The following is our main result concerning the replicated algebras of $\nu$-formal algebras.
In the special case of $d$-hereditary algebras, due to the regularity in the behaviour of the functions $s_i^\pm$, the corresponding results give nicer formulae.
For the convenience of reader, we will state them separately in subsection \ref{subsec-NF-d-hered}.

\begin{theorem}\label{thm-dims2}
Let $A$ be a $\nu$-formal algebra, then for all $m\geq 1$, we have
\begin{align*}
\codomdim DA^{(m)} = \domdim A^{(m)} & = m-\max\{s_x^-(m)\mid x\in \Simp\} = m+\min\{s_x^+(m) \mid x\in \Simp\}, \\
\text{ and } \quad \pdim DA^{(m)} =  \idim A^{(m)} &= m-\min\{s_x^-(m+1)\mid x\in\Simp\} = m+\max\{s_x^+(m+1)\mid x\in\Simp\}.
\end{align*}
In particular, $A^{(m)}$ is Iwanaga-Gorenstein.
\end{theorem}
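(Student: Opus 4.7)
My plan is to reduce the theorem to Proposition~\ref{prop-dim2} by first identifying the indecomposable projective non-injective and indecomposable injective non-projective $A^{(m)}$-modules, and then reading off the dimensions of $A^{(m)}$ by taking suitable extrema over $x \in \Simp$.

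First I would invoke the explicit decompositions
\[
\proj A^{(m)} = \add\{[A]_0,\, [DA,A]_i \mid 0 \leq i \leq m-1\}, \qquad \inj A^{(m)} = \add\{[DA]_m,\, [DA,A]_i \mid 0 \leq i \leq m-1\}
\]
listed after Observation~\ref{eg-modules}. Since the projective-injective indecomposables are precisely the summands of $[DA,A]_i$, the indecomposable projective non-injective summands of the regular module are $[P_x]_0 = [P_x^{\succ 0}]_0$ for $x \in \Simp$, while the indecomposable injective non-projective summands of $DA^{(m)}$ are $[I_x]_m = [I_x^{\prec 0}]_m$ for $x \in \Simp$. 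Plugging $(k,i)=(0,0)$ and $(k,i)=(0,m)$ into Proposition~\ref{prop-dim2} would then yield
\begin{align*}
\domdim_{A^{(m)}}[P_x]_0 &= m - s_x^-(m), & \idim_{A^{(m)}}[P_x]_0 &= m - s_x^-(m+1), \\
\codomdim_{A^{(m)}}[I_x]_m &= m + s_x^+(m), & \pdim_{A^{(m)}}[I_x]_m &= m + s_x^+(m+1).
\end{align*}

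Next, since $m\geq 1$ and $s_x^-(0)=s_x^+(0)=0$ with $s_x^-$ decreasing and $s_x^+$ increasing, each of these four values is at least $m>0$; the projective-injective indecomposables contribute $\infty$ to the dominant and codominant dimensions and $0$ to the projective and injective dimensions, hence play no role in the extrema. Taking minima for the dominant and codominant dimensions, and maxima for the injective and projective dimensions, I would obtain
\begin{align*}
\domdim A^{(m)} &= m - \max_{x \in \Simp} s_x^-(m), & \idim A^{(m)} &= m - \min_{x \in \Simp} s_x^-(m+1), \\
\codomdim DA^{(m)} &= m + \min_{x \in \Simp} s_x^+(m), & \pdim DA^{(m)} &= m + \max_{x \in \Simp} s_x^+(m+1).
\end{align*}

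Finally, to match the two expressions for the dominant dimension I would invoke the general identity $\codomdim DB = \domdim B$ recalled in the Preliminaries. For the injective dimension, the standard duality $D\colon \mod A^{(m)} \to \mod (A^{(m)})^{\mathrm{op}}$ swapping indecomposable projectives and injectives gives $\pdim DA^{(m)} = \idim {}_{A^{(m)}}A^{(m)}$, the left self-injective dimension of $A^{(m)}$. The finiteness of both $\idim A^{(m)}_{A^{(m)}}$ and $\idim {}_{A^{(m)}}A^{(m)}$ from the previous display shows that $A^{(m)}$ is Iwanaga-Gorenstein, which then forces the two self-injective dimensions (and hence the two displayed expressions for $\idim A^{(m)}$) to coincide. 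The hard work has already been absorbed into Proposition~\ref{prop-dim2}, so what remains is essentially combinatorial bookkeeping; the only subtle point I would watch is keeping the left/right module conventions consistent so that Iwanaga-Gorensteinness can be correctly extracted from the two separately computed finite quantities.
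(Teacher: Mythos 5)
Your proposal is correct and follows essentially the same route as the paper: identify the non-projective-injective indecomposable summands of $A^{(m)}$ and $DA^{(m)}$, substitute $(k,i)=(0,0)$ and $(k,i)=(0,m)$ into Proposition~\ref{prop-dim2}, take extrema, and match the columns via $\domdim\Lambda=\codomdim D\Lambda$ and $\idim\Lambda=\pdim D\Lambda$. Your version is in fact slightly more careful than the paper's (which writes $[I_x]_0$ where $[I_x]_m$ is meant, and is terser about how the two finite one-sided self-injective dimensions yield Iwanaga-Gorensteinness), but the substance is identical.
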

\begin{proof}
By definition, we have
\begin{align}
\domdim A^{(m)} & = \min\{ \domdim [P_x]_0 \mid x\in\Simp \}, & \idim A^{(m)} &= \max\{ \idim [P_x]_0 \mid x\in\Simp \},\notag \\
\codomdim DA^{(m)} & = \min\{ \codomdim [I_x]_0 \mid x\in\Simp \},  & \pdim DA^{(m)} &= \max\{ \pdim [I_x]_0 \mid x\in\Simp \}.\notag 
\end{align}
Recall (see Section \ref{sec-Prelim}) that for any algebra $\Lambda$, we always have $\domdim \Lambda=\codomdim D\Lambda$, so the two values in the first column are equal; similarly, we always have $\idim \Lambda = \pdim D\Lambda$ whenever both of them are finite, which gives the equality of the two values in the second column.
Now the claim follows from applying Proposition \ref{prop-dim2} and the fact that $s_x^\pm(0)=0$ for all $x\in\Simp$.
\end{proof}

One easy consequence of this theorem is that
\[
\min\{-s_x^-(k)\mid x\in\Simp\} = \min\{s_x^+(k)|x\in\Simp\} \quad \text{and}\quad \max \{ -s_x^-(k) \mid x\in\Simp\} = \max\{ s_x^+(k)\mid x\in\Simp\}
\]
for all $k\geq 0$.

\begin{corollary}\label{cor-minAG}
Let $A$ be a $\nu$-formal algebra and fix some $m\geq 1$.
Then the following are equivalent
\begin{enumerate}[(1)]
\item the $m$-th replicated algebra $A^{(m)}$ is minimal Auslander-Gorenstein;
\item $s_x^-(m)=s_y^-(m+1)$ for all $x,y\in\Simp$;
\item $P_x^{\succ m}$ is injective for all $x\in\Simp$;
\item $s_x^+(m)=s_y^+(m+1)$ for all $x,y\in\Simp$;
\item $I_x^{\succ m}$ is projective for all $x\in\Simp$;
\item $\nu^{-(m+1)}(A)\cong A[n]$ for some $n\in\mathbb{Z}$.
\end{enumerate}
\end{corollary}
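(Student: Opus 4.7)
The plan is to reduce each of the six conditions to a statement on the shift functions $s_x^\pm(k)$, using Theorem \ref{thm-dims2} as the bridge between homological dimensions of $A^{(m)}$ and the values of $s_x^-$, and Lemma \ref{lem-vanish} as the bridge between equalities of consecutive values of $s_x^\pm$ and the injectivity/projectivity of $P_x^{\succ k}$ and $I_x^{\prec k}$. The only non-trivial ingredient will be the passage from a pointwise condition such as (3) to the uniform twisted fractional Calabi-Yau condition (6), for which I will borrow the ring-indecomposability trick from the proof of Proposition \ref{prop-twistedCY}.

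For (1) $\Leftrightarrow$ (2), I would apply Theorem \ref{thm-dims2} to write
\[
\idim A^{(m)} - \domdim A^{(m)} = \max_{x \in \Simp} s_x^-(m) - \min_{y \in \Simp} s_y^-(m+1),
\]
and observe that the decreasing monotonicity of $s_x^-$ yields $s_x^-(m) \ge s_x^-(m+1) \ge \min_y s_y^-(m+1)$ for every $x$. The difference is therefore non-negative and vanishes precisely when $s_x^-(m) = s_y^-(m+1)$ uniformly in $x, y$, i.e.\ condition (2).

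Next I would establish the cycle (2) $\Rightarrow$ (3) $\Rightarrow$ (6) $\Rightarrow$ (2). The first implication is immediate from Lemma \ref{lem-vanish}(2) by setting $y = x$ in (2), as it gives $\idim_A P_x^{\succ m}=0$. For (3) $\Rightarrow$ (6), I note that since $\nu^{-m}$ is an auto-equivalence on $\mathrm{perf}(A)$, the family $\{P_x^{\succ m}\}_{x \in \Simp}$ consists of $|\Simp|$ pairwise non-isomorphic indecomposable injectives, and so must exhaust the full list of indecomposable injective $A$-modules. Choosing the bijection $\pi: \Simp \to \Simp$ with $P_x^{\succ m} \cong I_{\pi(x)}$ and using $\nu^{-1}(I_{\pi(x)}) \cong P_{\pi(x)}$, I obtain
\[
\nu^{-(m+1)}(P_x) \cong P_{\pi(x)}[s_x^-(m)].
\]
To deduce (6), I need to show $s_x^-(m)$ is independent of $x$; here I would adapt the argument of Proposition \ref{prop-twistedCY}: whenever $\Hom_A(P_x, P_y) \neq 0$, applying the auto-equivalence $\nu^{-(m+1)}$ gives a non-zero morphism in $\D$ between stalks of projectives placed in degrees $s_x^-(m)$ and $s_y^-(m)$, which forces $s_x^-(m) = s_y^-(m)$ since higher $\Ext$ from a projective vanishes and $\Hom_\D$ between modules in distinct degrees vanishes in one direction. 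Ring-indecomposability of $A$, equivalent to connectedness of its quiver, then propagates this equality to all of $\Simp$. Finally, (6) $\Rightarrow$ (2) follows from Lemma \ref{lem-vanish}(2) once more: (6) gives $s_x^-(m+1) = n$ constant in $x$ and $P_x^{\succ m+1}$ projective, so $s_x^-(m) - s_x^-(m+1) = \pdim_A P_x^{\succ m+1} = 0$, i.e.\ $s_x^-(m) = n$ uniformly as well.

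The equivalences (4) $\Leftrightarrow$ (5) $\Leftrightarrow$ (6) will follow by entirely dual arguments using $s_x^+$, $I_x^{\prec k}$, and parts (1'), (2') of Lemma \ref{lem-vanish}, or alternatively by passing to the $\nu$-formal algebra $A^{\mathrm{op}}$ via Example \ref{eg-alg}(1). The main obstacle is the step (3) $\Rightarrow$ (6) (and dually (5) $\Rightarrow$ (6)), where ring-indecomposability is essential to upgrade the pointwise equality $s_x^-(m) = s_x^-(m+1)$ into the uniform statement identifying $\nu^{-(m+1)}(A)$ with a shift of $A$.
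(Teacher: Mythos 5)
Your proof is correct and rests on the same two pillars as the paper's own argument: Theorem \ref{thm-dims2} together with the monotonicity of $s_x^-$ gives (1) $\Leftrightarrow$ (2), and Lemma \ref{lem-vanish} translates equalities of consecutive values of $s_x^\pm$ into injectivity/projectivity of $P_x^{\succ k}$ and $I_x^{\prec k}$. The one genuine difference is organizational and works in your favour: the paper simply asserts that (2) is equivalent to (3) and that (2) yields (6) ``clearly'', whereas passing from the pointwise condition (3) back to the uniform conditions (2) and (6) really does require the constancy of $x\mapsto s_x^-(m)$ over $\Simp$ --- precisely the $\Hom$-plus-ring-indecomposability argument you import from Proposition \ref{prop-twistedCY}. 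Your cycle (2) $\Rightarrow$ (3) $\Rightarrow$ (6) $\Rightarrow$ (2) therefore makes explicit a step the paper leaves unjustified. One small ordering quibble: at the start of (3) $\Rightarrow$ (6) you declare the modules $P_x^{\succ m}$ pairwise non-isomorphic before knowing that the shifts $s_x^-(m)$ agree, but the autoequivalence $\nu^{-m}$ only guarantees that the shifted objects $P_x^{\succ m}[s_x^-(m)]$ are pairwise non-isomorphic in $\D$. This is harmless, since the bijectivity of $\pi$ is only needed after the constancy of $s_x^-(m)$ has been established, but the claim should be deferred to that point.
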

\begin{proof}
By Theorem \ref{thm-dims2}, we have equivalence between (1) and any of the following conditions.
\begin{enumerate}[(i)]
\item $\max\{s_x^-(m)\mid x\in \Simp\}=\min\{s_x^-(m+1)\mid x\in \Simp\}$.
\item $\min\{s_x^+(m)\mid x\in \Simp\}=\max\{s_x^+(m+1)\mid x\in \Simp\}$.
\end{enumerate}
Since $s_x^-$ is a decreasing function over $\Z_{\geq 0}$ for each $x\in \Simp$, we have
\[
\max\{s_x^-(m)\mid x\in\Simp\} \geq s_y^-(m) \geq s_y^-(m+1) \geq \min\{s_x^-(m+1)\mid x\in\Simp\},
\]
for all $y\in \Simp$.
Therefore, (i) is equivalent to the condition (2), which in turn is equivalent to (3).
Dually, (ii) is equivalent to (4), which in turn is also equivalent to (5).

Given (6), then (2) and (3) are satisfied with $s_x^-(m)=n=s_y^-(m+1)$ for all $x,y\in\Simp$.
Conversely, given (2) (and hence, (3)), then clearly we get (6) with $n=s_x^-(m)$.
\end{proof}

\begin{corollary}\label{cor-minAG2}
Suppose $A$ is a $\nu$-formal algebra.
Then the following are equivalent.
\begin{enumerate}[(1)]
\item $A$ is periodically $\nu$-formal.
\item There exists some $m\in \Z_+$ such that $A^{(m)}$ is minimal Auslander-Gorenstein.
\item There exists infinitely many $m\in \Z_+$ such that $A^{(m)}$ is minimal Auslander-Gorenstein.
\end{enumerate}
Moreover, when $A$ is periodically $\nu$-formal with twisted Calabi-Yau dimension $(h_A,-s_A)$, $A^{(m)}$ is minimal Auslander-Gorenstein if, and only if, $m=th_A-1$ for some $t>0$; in such a case, we have
\[
\domdim A^{(m)} = t(h_A-s_A)-1 = \idim A^{(m)}.
\]
\end{corollary}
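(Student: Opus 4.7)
The plan is to deduce everything from Corollary \ref{cor-minAG} (in particular condition (6)) together with Proposition \ref{prop-twistedCY}, and then use Theorem \ref{thm-dims2} to read off the dimensions.

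First I would dispose of the easy implications. The implication (3)$\Rightarrow$(2) is trivial. For (2)$\Rightarrow$(1), by Corollary \ref{cor-minAG}(6), if $A^{(m)}$ is minimal Auslander-Gorenstein for some $m\geq 1$, then $\nu^{-(m+1)}(A)\cong A[n]$ for some integer $n$, which means $A$ is twisted fractionally Calabi-Yau. Applying Proposition \ref{prop-twistedCY} then shows $A$ is periodically $\nu$-formal.

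For (1)$\Rightarrow$(3), assume $A$ is periodically $\nu$-formal with twisted Calabi-Yau dimension $(h_A,-s_A)$, so that by definition $\nu^{-h_A}(A)\cong A[s_A]$. Iterating, for any $t>0$ we get $\nu^{-t h_A}(A)\cong A[t s_A]$. Setting $m+1=th_A$, Corollary \ref{cor-minAG}(6) says that $A^{(m)}$ is minimal Auslander-Gorenstein, which gives infinitely many such $m$.

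The remaining content is the ``if and only if'' characterisation $m=th_A-1$ and the dimension formulae. For the only-if direction I would argue that the set
\[
E:=\{k>0\mid \nu^{-k}(A)\cong A[n]\text{ for some }n\in\mathbb{Z}\}
\]
is closed under positive differences: if $\nu^{-k_1}(A)\cong A[n_1]$ and $\nu^{-k_2}(A)\cong A[n_2]$ with $k_1>k_2$, then applying $\nu^{k_2}$ to the first isomorphism yields $\nu^{-(k_1-k_2)}(A)\cong A[n_1-n_2]$. Hence every element of $E$ is a positive multiple of its minimum $h_A$. Combined with Corollary \ref{cor-minAG}(6), this means $A^{(m)}$ is minimal Auslander-Gorenstein precisely when $m+1\in E$, i.e.\ when $m=th_A-1$ for some $t>0$; moreover in that case $\nu^{-(m+1)}(A)\cong A[ts_A]$, so $s_x^{-}(m+1)=ts_A$ for every $x\in\Simp$.

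Finally, for the dimension formulae, I would combine Corollary \ref{cor-minAG}(3) (which says $P_x^{\succ m}$ is injective for all $x\in\Simp$, forcing $s_x^{-}(m)=s_x^{-}(m+1)=ts_A$) with Theorem \ref{thm-dims2}:
\[
\idim A^{(m)} = m-\min_{x}s_x^{-}(m+1) = (th_A-1)-ts_A = t(h_A-s_A)-1,
\]
and the same value for $\domdim A^{(m)}=m-\max_{x}s_x^{-}(m)$. The main (minor) obstacle is the ``$m+1\in h_A\mathbb{Z}_{>0}$'' step, where one needs the closure of $E$ under differences; everything else is a direct application of the results already proved.
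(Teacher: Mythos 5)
Your proposal is correct and follows essentially the same route as the paper: the equivalence of (1)--(3) via Proposition \ref{prop-twistedCY} and Corollary \ref{cor-minAG}(6), the characterisation $m=th_A-1$ via minimality of $h_A$ (your closure-under-differences argument for the set $E$ is exactly the justification the paper leaves implicit in the phrase ``by the minimality of $h_A$''), and the dimension formulae by substituting $s_x^-(m)=s_x^-(m+1)=ts_A$ into Theorem \ref{thm-dims2}. No gaps.
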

\begin{proof}
The equivalence is just an immediate consequence of Proposition \ref{prop-twistedCY} and Corollary \ref{cor-minAG}.
By the minimality of $h_A$, $\nu^{-m}(A)\cong A[n]$ is equivalent to $m=th_A$ for some integer $t>0$; the final equivalence follows.
Moreover, since $\nu^{-th_A}(A)\cong A[ts_A]$, we have $s_x^-(th_A-1)=s_x^-(th_A)=ts_A$.
Applying this to Theorem \ref{thm-dims2} yields $\idim A^{(m)}=\domdim A^{(m)}=th_A-1-ts_A$ as stated.
\end{proof}

\subsection{The case of higher hereditary algebras}\label{subsec-NF-d-hered}
We specialise Theorem \ref{thm-dims2} and its corollaries for the case when $A$ is a ring-indecomposable non-simple $d$-hereditary algebra with $d\geq 1$, i.e. $\gldim A=d$ and $\nu^i(A) \in \add\bigcup_{k\in \Z}(\mod A)[dk]$ for all $i\in \Z$.
Recall the following subclasses of $d$-hereditary algebras.

\begin{definition}\label{def-d-hered}
Let $d$ be a positive integer.
\begin{itemize}
\item An algebra $A$ is \emph{$d$-representation-finite} if $\gldim A\leq d$ and for each indecomposable projective $A$-module $P$, there is some $t_P\geq 0$ so that $\nu^{-t_P}(P[dt_P])$ is an indecomposable injective $A$-module.

\item An algebra $A$ is \emph{$d$-representation-infinite} if $\gldim A\leq d$ and $\nu^{-i}(A[di]) \in \mod A$ for all $i\geq 0$.
\end{itemize}
\end{definition}
In order to help comparing these algebras with the more general $\nu$-formal algebras, the definitions given here are not presented in the same way as the usual one in the literature.
Note that the usual definition of $d$-representation-finite algebra is stated in terms of the existence of $d$-cluster-tilting module and the requirement on global dimension; the two definitions are equivalent by \cite[Thm 1.23]{Iya2}.

It is clear from the definition that $d$-representation-finite algebras are periodically $\nu$-formal.
More generally, as shown in \cite[Prop 2.9, 3.3]{HIO}, any $d$-hereditary algebra is also $\nu$-formal.
Moreover, the statement of \cite[Lemma 3.5]{HIO} says precisely that $d$-representation-infinite algebras are aperiodically $\nu$-formal.
The following dichotomy theorem says that there does not exists any aperiodically $\nu$-formal algebra that is not $d$-representation-infinite.

\begin{theorem}[Dichotomy Theorem]{\rm \cite[Theorem 3.4]{HIO}}
Any ring-indecomposable $d$-hereditary algebra is either $d$-representation-finite or $d$-representation-infinite.
\end{theorem}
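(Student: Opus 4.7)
The plan is to partition the set $\Simp$ of simple modules into two classes — those whose indecomposable projective cover has a $\nu^{-1}$-orbit that eventually reaches an injective module, and those whose orbit does not — and to rule out the mixed case using ring-indecomposability together with a slope estimate on the function $s_x^-$.

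First, I would reinterpret Definition \ref{def-d-hered} via the function $s_x^-$ of Subsection \ref{subsec-NF-Bkgd}. By the Example immediately after Lemma \ref{lem-vanish}, for $d$-hereditary $A$ every step-drop $s_x^-(k)-s_x^-(k+1)$ lies in $\{0,d\}$, with value $0$ precisely when $P_x^{\succ k}$ is injective. Set
\begin{align*}
\Simp_{\mathrm{fin}} := \{x\in\Simp \mid P_x^{\succ k}\in\add DA \text{ for some } k\ge 0\},
\end{align*}
and $\Simp_{\mathrm{inf}} := \Simp\setminus\Simp_{\mathrm{fin}}$. If $x\in\Simp_{\mathrm{inf}}$, then $s_x^-(i)=-di$ for every $i\ge 0$, so $\Simp_{\mathrm{inf}}=\Simp$ is equivalent to $A$ being $d$-representation-infinite. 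Conversely, if $x\in\Simp_{\mathrm{fin}}$ and $t_x$ denotes the smallest $k$ with $P_x^{\succ k}\in\inj A$, then $s_x^-(t_x)=-dt_x$ (since every earlier step dropped by $d$), so $\nu^{-t_x}(P_x[dt_x])\cong P_x^{\succ t_x}\in\inj A$; also $\sigma(x)\in\Simp_{\mathrm{fin}}$ by construction, so $\Simp_{\mathrm{fin}}$ is $\sigma$-stable. Hence $\Simp_{\mathrm{fin}}=\Simp$ is equivalent to $A$ being $d$-representation-finite.

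Next I would control the long-run behaviour of $s_x^-$. For $x\in\Simp_{\mathrm{fin}}$, its $\sigma$-orbit is finite and lies entirely inside $\Simp_{\mathrm{fin}}$, so applying (the proof of) Proposition \ref{prop-twistedCY} to this single orbit gives integers $h_x,s_x>0$ with $s_x^-(nh_x)=-ns_x$ for every $n\ge 0$. Counting step-drops over one period, $s_x$ equals $d$ times the number of indices $k\in\{0,\dots,h_x-1\}$ for which $P_x^{\succ k}$ is non-injective; since at least one such $k$ must yield an injective, one obtains $s_x\le d(h_x-1)$, and hence $\lim_{i\to\infty}s_x^-(i)/i = -s_x/h_x > -d$. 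By contrast, for $y\in\Simp_{\mathrm{inf}}$ one has $s_y^-(i)/i = -d$ exactly.

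The crucial step is to preclude both $\Simp_{\mathrm{fin}}$ and $\Simp_{\mathrm{inf}}$ being non-empty. Ring-indecomposability of $A$ makes its Gabriel quiver connected as an undirected graph, so in the mixed case we could choose $x\in\Simp_{\mathrm{fin}}$ and $y\in\Simp_{\mathrm{inf}}$ with, say, $\Hom_A(P_x,P_y)\ne 0$. Applying the auto-equivalence $\nu^{-i}$ on $\mathrm{perf}(A)$ preserves this Hom and yields, for every $i\ge 0$,
\begin{align*}
0\ne \Hom_{\D}(\nu^{-i}P_x,\nu^{-i}P_y) \cong \Ext_A^{s_y^-(i)-s_x^-(i)}(P_x^{\succ i},P_y^{\succ i}),
\end{align*}
which, since $\gldim A=d$, forces $0\le s_y^-(i)-s_x^-(i)\le d$. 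But the slope estimates above give $s_y^-(i)-s_x^-(i) = i(s_x/h_x - d) + O(1)\to -\infty$, contradicting the lower bound; the case $\Hom_A(P_y,P_x)\ne 0$ is handled symmetrically. The main technical care is thus in the orbit bookkeeping that produces the strict inequality $s_x/h_x<d$; once this is established, the slope comparison against the exact value $-d$ available on $\Simp_{\mathrm{inf}}$ delivers the dichotomy at once.
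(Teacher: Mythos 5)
This statement is not proved in the paper at all --- it is imported verbatim from \cite[Theorem 3.4]{HIO} --- so the only question is whether your argument stands on its own. It does not quite: there is a genuine gap at the assertion that $\Simp_{\mathrm{fin}}$ is $\sigma$-stable, which you dismiss with ``by construction''. By construction $\sigma(x)$ is the label of the projective $P_x^{\succ \ell_x}$, but nothing in the construction tells you that the forward $\nu^{-1}$-orbit of \emph{this} projective again reaches an injective, i.e.\ that $\ell_{\sigma(x)}<\infty$. A priori the orbit of $P_x$ could meet exactly one injective and then behave like an orbit from $\Simp_{\mathrm{inf}}$ forever after; in that case $s_x^-(i)=-di+d$ for all large $i$, the asymptotic slope of $s_x^-$ is exactly $-d$ rather than strictly greater, and your entire second and third paragraphs collapse. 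Note that a posteriori the stability claim is true, but only because the dichotomy theorem itself forces $\Simp_{\mathrm{fin}}$ to be empty or everything --- so as written the step is close to circular, and it is precisely where the real content of \cite[Theorem 3.4]{HIO} lives.

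To see where this bites concretely, separate your two Hom-directions. For $\Hom_A(P_x,P_y)\neq 0$ with $y\in\Simp_{\mathrm{inf}}$, the degree $s_y^-(i)-s_x^-(i)=-di-s_x^-(i)$ is automatically $\leq 0$ (each step of $s_x^-$ drops by at most $d$), so nonvanishing forces $s_x^-(i)=-di$ for all $i$; a single injective in the orbit of $P_x$ already gives a contradiction, and no slope asymptotics are needed. This direction of your argument is fine and in fact simpler than you make it. But for $\Hom_A(P_y,P_x)\neq 0$ the constraint is only the upper bound $s_x^-(i)+di\leq d$, which is perfectly consistent with the orbit of $P_x$ containing exactly one injective. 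Ruling that scenario out is exactly the missing stability statement, and it is not ``handled symmetrically'' --- the two inequalities are not symmetric because $s_x^-(i)\geq -di$ holds for free in one direction only. You would need an independent argument (this is essentially what \cite[Lemma 3.5]{HIO} and the surrounding machinery supply) that once the orbit of an indecomposable projective meets one injective it must meet another; without it the proof is incomplete.
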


From now on, we assume that $A$ is a ring-indecomposable non-simple $d$-hereditary algebra.
As before, we use $\Simp$ to denote the indexing set of simple $A$-modules.
For $x\in \Simp$, the functions $s_x^\pm : \Z_{\geq 0}\to \pm \Z_{\geq 0}$ can be described more concretely relative to general $\nu$-formal algebras as follows.
\[
s_x^\pm(k) = \pm \left(k-r_x^\pm(k)\right)d,
\]
where $r_x^-(k)$ (resp. $r_x^+(k)$) is the number of (indecomposable) injective (resp. projective) modules appearing in the sequence $(P_x^{\succ i})_{0\leq i<k}$ (resp. $(I_x^{\prec i})_{0\leq i<k}$).
If $A$ is, moreover, $d$-representation-infinite, then $r_x^\pm(k)=0$ for all $x\in\Simp$ and $k\geq 0$.
For simplicity, we only consider the results involved on the $s_x^-$ side, since the other side can be treated dually.

Now Theorem \ref{thm-dims2} gives us the following.
\begin{corollary}\label{cor-dim-dhered}
If $A$ is an indecomposable non-simple $d$-hereditary algebra, then we have 
\begin{align}
\idim A^{(m)} = \pdim DA^{(m)} &= (d+1)m+d-(\min\{r_x^-(m+1)\mid x\in\Simp\})d,\notag \\
\text{and}\quad \domdim A^{(m)} = \codomdim DA^{(m)} & = (d+1)m-(\max\{r_x^-(m)\mid x\in\Simp\})d. \notag
\end{align}
In particular, if $A$ is $d$-representation-infinite, then
\[
\gldim  A^{(m)} = \idim  A^{(m)} = (d+1)m+d \quad \text{ and }\quad \domdim  A^{(m)} = \codomdim DA^{(m)} = (d+1)m.
\]
\end{corollary}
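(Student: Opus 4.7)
The plan is to derive this corollary as a direct specialisation of Theorem \ref{thm-dims2}, using the explicit formula for $s_x^\pm$ in the $d$-hereditary setting that was recorded just before the statement. The main observation is purely combinatorial: from
\[
s_x^-(k) = -(k - r_x^-(k))d
\]
one gets $\max_{x}s_x^-(m) = -dm + d\max_{x} r_x^-(m)$ and $\min_{x}s_x^-(m+1) = -d(m+1) + d\min_{x}r_x^-(m+1)$, and symmetrically for $s_x^+$. Substituting these into the two formulae
\[
\domdim A^{(m)} = m - \max_{x}s_x^-(m), \qquad \idim A^{(m)} = m - \min_{x}s_x^-(m+1)
\]
provided by Theorem \ref{thm-dims2} immediately yields the two displayed identities; the equalities $\domdim A^{(m)} = \codomdim DA^{(m)}$ and $\idim A^{(m)} = \pdim DA^{(m)}$ are already part of Theorem \ref{thm-dims2}, so no extra work is required here.

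For the $d$-representation-infinite case, I would invoke the remark made right after Definition \ref{def-d-hered} (and visible from the definition itself): when $A$ is $d$-representation-infinite, no $P_x^{\succ i}$ is injective and no $I_x^{\prec i}$ is projective, so $r_x^\pm(k) = 0$ for every $x \in \Simp$ and every $k \geq 0$. Plugging this into the general formulae collapses the correction terms and gives $\idim A^{(m)} = (d+1)m + d$ and $\domdim A^{(m)} = (d+1)m$ as asserted.

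It remains to justify replacing $\idim A^{(m)}$ by $\gldim A^{(m)}$ in the $d$-representation-infinite case. Since $A$ has $\gldim A = d < \infty$, Lemma \ref{lem-gldim} gives $\gldim A^{(m)} \leq (m+1)d + m < \infty$, so $A^{(m)}$ has finite global dimension and hence $\gldim A^{(m)} = \idim A^{(m)}$. No obstacle is anticipated: the statement is essentially a bookkeeping consequence of Theorem \ref{thm-dims2} combined with the special form of $s_x^\pm$ in the $d$-hereditary case, and the only thing to be mildly careful about is tracking signs when passing from the max of $s_x^-$ to the max of $r_x^-$.
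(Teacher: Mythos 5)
Your proposal is correct and follows exactly the route the paper intends: the corollary is stated immediately after the formula $s_x^\pm(k)=\pm(k-r_x^\pm(k))d$ and the remark that $r_x^\pm(k)=0$ in the $d$-representation-infinite case, and the paper derives it by substituting these into Theorem \ref{thm-dims2} just as you do. Your sign bookkeeping and the final appeal to Lemma \ref{lem-gldim} to identify $\gldim A^{(m)}$ with $\idim A^{(m)}$ are both accurate, so nothing is missing.
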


By Proposition \ref{prop-twistedCY} (or \cite[Theorem 1.1]{HerIya}), $A$ being $d$-representation-finite means that it is twisted fractionally Calabi-Yau of dimension $(h_A,-s_A)$.
Note that $h_A$ is given by $\ell_x+\ell_{\sigma(x)}+\cdots +\ell_{\sigma^{r-1}(x)}$ for some $0<r\leq k_A$ independent of $x\in\Simp$, and $s_A=s_x^-(h_A)=(r_x^-(h_A)-h_A)d=(r-h_A)d$.
Therefore, we have the following $d$-hereditary specialisation of Corollary \ref{cor-minAG2}.

\begin{proposition}\label{prop-RF-d}
Suppose $A$ is a ring-indecomposable non-simple $d$-representation-finite algebra of twisted Calabi-Yau dimension $(h,(h-r)d)$.
$A^{(m)}$ is higher Auslander algebra if, and only if, $m=th-1$ for some $t>0$.
Moreover, in such a case, we have 
\[  \gldim A^{(m)} = ((d+1)h-dr)t-1 = \domdim A^{(m)} \]
\end{proposition}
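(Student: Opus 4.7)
The proposition is a direct specialization of Corollary \ref{cor-minAG2} to the $d$-representation-finite setting, so the plan is essentially bookkeeping: unpack the twisted Calabi--Yau dimension, substitute into the general formula, and verify that in this setting the notions of minimal Auslander--Gorenstein and higher Auslander coincide.

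Concretely, I would proceed as follows. First, since $A$ is $d$-representation-finite, it is periodically $\nu$-formal: for each indecomposable projective $P_x$ the orbit eventually returns to $\add A$ because some $\nu^{-t_x}(P_x[dt_x])$ is indecomposable injective by Definition \ref{def-d-hered}, and one more application of $\nu^{-1}$ then lands on an indecomposable projective (shifted). Thus Proposition \ref{prop-twistedCY} applies and the twisted Calabi--Yau dimension $(h_A,-s_A)$ is defined. By the discussion immediately preceding Subsection \ref{subsec-NF-d-hered}, $h_A=h$ and $s_A=s_x^{-}(h)=(r_x^{-}(h)-h)d=(r-h)d$, where $r=r_x^{-}(h)$ is the common value appearing in the hypothesis (rewritten: $-s_A=(h-r)d$). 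Applying Corollary \ref{cor-minAG2} directly then yields: $A^{(m)}$ is minimal Auslander--Gorenstein if and only if $m=th-1$ for some $t>0$, and in this case
\[
\idim A^{(m)} \;=\; t(h_A - s_A)-1 \;=\; t\bigl(h-(r-h)d\bigr)-1 \;=\; \bigl((d+1)h-dr\bigr)t-1 \;=\; \domdim A^{(m)}.
\]

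It remains to upgrade ``minimal Auslander--Gorenstein'' to ``higher Auslander'' for these $m$, i.e.\ to replace $\idim A^{(m)}$ by $\gldim A^{(m)}$. Here I would invoke Lemma \ref{lem-gldim}: since $A$ has $\gldim A\le d<\infty$, we have $\gldim A^{(m)}\le (m+1)d+m<\infty$ for every $m\ge 1$. Combining this with Theorem \ref{thm-dims2}, which ensures that $A^{(m)}$ is Iwanaga--Gorenstein, and with Lemma \ref{lem-IGfacts} (an Iwanaga--Gorenstein algebra has global dimension equal either to its self-injective dimension or $\infty$), we conclude $\gldim A^{(m)}=\idim A^{(m)}$. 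Conversely, any higher Auslander algebra is a fortiori minimal Auslander--Gorenstein, so the two characterizations coincide in our setting. This turns the conclusion of Corollary \ref{cor-minAG2} into the required statement about $\gldim A^{(m)}$. (Note also that the equality $\gldim A^{(m)}=\domdim A^{(m)}<\infty$ automatically forces $A^{(m)}$ to be non-self-injective, so the definition of higher Auslander is satisfied without extra assumption.)

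There is no real obstacle: every ingredient has been set up in the preceding subsections. The only care needed is the algebraic identity $h_A-s_A=(d+1)h-dr$, which is a one-line substitution, and the justification that finite global dimension of the base algebra survives the replicated construction, which is precisely the content of Lemma \ref{lem-gldim}.
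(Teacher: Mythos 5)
Your proof is correct and follows essentially the same route as the paper, which obtains the proposition as a direct specialization of Corollary \ref{cor-minAG2} via the computation $s_A=(r-h)d$, so that $h_A-s_A=(d+1)h-dr$. The only addition is that you make explicit the step the paper leaves implicit, namely that $\gldim A^{(m)}=\idim A^{(m)}$ because $\gldim A^{(m)}$ is finite by Lemma \ref{lem-gldim}; this is a worthwhile clarification but not a different argument.
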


\subsection{Further example I: Quotient of linear oriented path algebra of type A}\label{subsec-NF-naka}
Consider the quiver $\vec\DynA_n$ given by linearly oriented Dynkin quiver of type $\DynA_n$ (with $n>1$):
\[
\vec\DynA_n:= \xymatrix{ 1 \ar[r] & 2 \ar[r] & 3 \ar[r] & \cdots \ar[r]& {n-1} \ar[r] & {n}}.
\]
In this subsection, we will classify all the ring-indecomposable quotient algebras of $K\vec\DynA_n$ which are $\nu$-formal.
It turns out that $\nu$-formality is equivalent to higher representation-finiteness.
Let us recall some background information and fix some notations first.

Since $\vec\DynA_n$ is acyclic, the global dimension of any quotient $A$ of $K\vec\DynA_n$ is always finite.
In particular, it is Iwanaga-Gorenstein with $\gldim A = \idim A_A$.

Our calculation depends only on a combinatorial tool which uniquely determines the Morita equivalence classes of Nakayama algebras - the Kupisch series.
Suppose $A$ is a Nakyama algebra with (Ext-)quiver $Q$, then its \emph{Kupisch series} $[c_1,c_2,\ldots ,c_n]$ is given by $c_i$ being the length of the indecomposable projective $A$-module corresponding to the vertex $i\in Q_0$.
For example, the algebra $K\vec\DynA_n$ has Kupisch series $[n,n-1,n-2,\ldots ,2,1]$.
In our setting, since $A$ is a ring-indecomposable quotient of $\DynA_n$, the Kupisch series always satisfies the conditions $c_n=1$, $c_i \geq 2$ for $i < n$, and $c_i\leq c_{i+1}+1$.

Throughout, we will use $P_i, I_i, S_i$ to denote the indecomposable projective, injective, simple modules corresponding to $1\leq i\leq n$ respectively.
For $l\leq n$, we define $T_{n,l}:=K\vec\DynA_n/\rad^l(K\vec\DynA_n)$ to be the Nakayama algebra with $n$ simple modules and Kupisch series $[l,l,l,\ldots,l,l-1,l-2,\ldots,2,1]$.
For the background on the computation of minimal projective resolutions and minimal injective coresolutions in Nakayama algebras, see for example \cite{Mar}.

\begin{lemma}\label{lem-naka-3types}
Suppose $A$ is a ring-indecomposable quotient algebra of $K\vec\DynA_n$ with Kupisch series $[c_1,c_2,\ldots, c_n]$.  Then one of the following holds.
\begin{enumerate}[(1)]
\item There exists $1\leq i<n$ such that $c_i<c_{i+1}$.
\item There exists $1< i<n$ such that $c_{i-1}-1=c_i=c_{i+1}$.
\item There exists $l\leq n$ such that $A=T_{n,l}$.
\end{enumerate}
\end{lemma}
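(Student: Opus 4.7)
\textbf{Proof plan for Lemma \ref{lem-naka-3types}.}
The plan is a direct trichotomy argument: assume both (1) and (2) fail and deduce that the Kupisch series takes the form dictated by (3). The key observation is that the three conditions partition the possible shapes of weakly decreasing Kupisch sequences with jumps of size at most one, so the proof reduces to combinatorics of such sequences.

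First, I would negate (1): for every $1\le i<n$, we have $c_i\ge c_{i+1}$. Combined with the Kupisch constraint $c_i\le c_{i+1}+1$, this forces $c_i-c_{i+1}\in\{0,1\}$ for all $i<n$, so the sequence is weakly decreasing with unit drops. Next, negating (2) says that no interior index $i$ satisfies $c_{i-1}-1=c_i=c_{i+1}$; in other words, a drop can never be followed by a stay. Equivalently, once a strict descent occurs, every subsequent step must again be a strict descent.

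Now let $k$ be the largest index with $c_1=c_2=\cdots=c_k$, and set $l:=c_1$. Since $n>1$ and $c_n=1<c_1$, we have $k<n$, and by the first step of the analysis $c_{k+1}=c_k-1=l-1$. Applying the ``no stay after a drop'' rule inductively from position $k+1$ onwards, we obtain $c_{k+j}=l-j$ for all $0\le j\le n-k$. The boundary condition $c_n=1$ then pins down $k=n-l+1$, so the Kupisch series is
\[
[\underbrace{l,l,\ldots,l}_{n-l+1},\,l-1,\,l-2,\,\ldots,\,2,\,1],
\]
which is exactly the Kupisch series of $T_{n,l}$. Ring-indecomposability and the constraint $c_i\ge 2$ for $i<n$ force $l\ge 2$, and $l\le n$ is automatic since $c_1\le c_2+1\le\cdots\le c_n+n-1=n$; hence (3) holds.

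There is no real obstacle here: the argument is a short case analysis combined with an induction along the sequence. The only subtlety is making sure the ``no stay after a drop'' reformulation of the negation of (2) is used in the correct direction, and checking the boundary conditions on $l$ (namely $2\le l\le n$) so that the resulting algebra genuinely matches the definition of $T_{n,l}$.
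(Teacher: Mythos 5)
Your proof is correct and rests on the same combinatorial observation as the paper's: after negating (1), the Kupisch series is weakly decreasing with unit drops, and the trichotomy is just whether such a sequence has the shape ``all stays then all drops'' (which is (3)) or contains a drop followed by a stay (which is (2)). The paper argues the contrapositive direction $\neg(1)\wedge\neg(3)\Rightarrow(2)$ in one line, while you spell out $\neg(1)\wedge\neg(2)\Rightarrow(3)$ with the induction and boundary checks made explicit; the content is the same.
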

\begin{proof}
If (1) does not hold, then we have $c_1 \geq c_2 \geq \cdots \geq c_n=1$.
Since (3) is equivalent to having some $1\leq i<n$ with $c_1=c_2=\cdots =c_i>c_{i+1} >\cdots >c_n$, when both (1) and (3) fail to hold, then there exists some $1<i<n$ with $c_{i-1}>c_i=c_{i+1}$, which is exactly (2).
\end{proof}

\begin{lemma}\label{lem-non-nf-naka1}
Suppose $A$ is a ring-indecomposable quotient algebra of $K\vec\DynA_n$.
If there exists $1\leq i<n$ so that $c_i<c_{i+1}$, then $A$ is not $\nu$-formal.
\end{lemma}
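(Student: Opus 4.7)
The plan is to produce an indecomposable direct summand of $\nu^2(A)$ that is not a stalk complex, which suffices to defeat $\nu$-formality. Given $c_i<c_{i+1}$, the natural candidate is the injective module $J:=I_{i+c_i}$. A short check with the Kupisch inequalities shows that $J$ is uniserial of length exactly $c_i$ with top $S_{i+1}$: the length of $I_{i+c_i}$ is the maximum $l$ satisfying $c_{i+c_i-l+1}\geq l$, and this is pinned to $c_i$ by the iterated bound $c_{i-k+1}\leq c_i+k-1$ (coming from $c_j\leq c_{j+1}+1$) on one side and by the hypothesis $c_{i+1}>c_i$ on the other. Since $J$ has top $S_{i+1}$, the only candidate for it to be projective is $P_{i+1}$, whose length $c_{i+1}$ differs from $c_i$; hence $J$ is not projective.

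I would then verify that $H^0(\nu(J))=\nu_A(J)=D\Hom_A(J,A)$ is non-zero. Since $\rad(P_i)$ is the uniserial module of length $c_i-1$ with top $S_{i+1}$, it agrees with $J/\soc(J)$, giving the non-zero composition
\[
J\twoheadrightarrow J/\soc(J)\xrightarrow{\sim}\rad(P_i)\hookrightarrow P_i
\]
in $\Hom_A(J,P_i)$. This uses $c_i\geq 2$, which is automatic: ring-indecomposability of $A$ forbids $c_j=1$ for any $j<n$ (otherwise $P_j=S_j$ and the arrow $j\to j+1$ vanishes, disconnecting the quiver), while $c_i<c_{i+1}$ already requires $i<n$.

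The key step is a Serre-duality argument ruling out $\nu(J)$ being concentrated in degree zero. Suppose for contradiction that $\nu(J)\cong M$ for some $M\in\mod A$. Because $A$ has finite global dimension (as a quotient of the acyclic algebra $K\vec\DynA_n$), $J$ lies in $\D=\mathrm{perf}(A)$, and Serre duality on $\D$ gives, for every simple $A$-module $S$ and every integer $n>0$,
\[
\Ext^n_A(J,S)\cong D\Hom_\D(S[n],M)=D\Ext^{-n}_A(S,M)=0.
\]
Hence $J$ is projective, contradicting the first paragraph. Consequently $\nu(J)$ has non-zero cohomology in some degree different from zero; combined with $H^0(\nu(J))\neq 0$, the complex $\nu(J)$ has cohomology in at least two distinct degrees and is therefore not a stalk complex. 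Since $J$ is an indecomposable summand of $\nu(A)=DA$ and $\nu$ is an auto-equivalence of $\D$, the indecomposable object $\nu(J)$ is a direct summand of $\nu^2(A)$; this non-stalk summand precludes $\nu^2(A)$ from decomposing as a direct sum of stalks, so $A$ is not $\nu$-formal.

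The only real bookkeeping is the Kupisch-series computation pinning down $I_{i+c_i}$; the homological heart of the argument is the Serre-duality observation that a non-projective object of $\D$ cannot have its Serre dual sitting as a module concentrated in degree zero.
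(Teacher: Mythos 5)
Your proof is correct, and after the common opening it takes a genuinely different route from the paper's. Both arguments begin the same way: identify $J=I_{i+c_i}$ as uniserial of length $c_i$ with top $S_{i+1}$ and observe that it cannot be projective (its length disagrees with $c_{i+1}$). From there the paper argues by contradiction \emph{inside} the $\nu$-formality hypothesis: since $\nu(J)$ would be a stalk complex and $J$ is non-projective of finite projective dimension, the stalk must sit in negative degree, forcing $\nu_A(J)=H^0(\nu J)=0$; applying $\nu_A=-\otimes_A DA$ to the projective presentation $P_{i+c_i+1}\to P_{i+1}\to J\to 0$ then yields a surjection $I_{i+c_i+1}\twoheadrightarrow I_{i+1}$, and a comparison of tops and socles gives $I_{i+1}=S_{i+1}$, contradicting ring-indecomposability. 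You instead refute stalk-ness of the indecomposable object $\nu(J)$ head-on, by exhibiting cohomology in two degrees: the explicit non-zero composite $J\twoheadrightarrow J/\soc(J)\cong\rad(P_i)\hookrightarrow P_i$ shows $H^0(\nu J)=D\Hom_A(J,A)\neq 0$ unconditionally, while Serre duality plus non-projectivity forces $\Ext^n_A(J,A)\neq 0$, i.e.\ $H^{-n}(\nu J)\neq 0$, for some $n>0$. Your version is arguably more economical: it shows that the paper's intermediate conclusion $\nu_A(I_{i+c_i})=0$ already contradicts an unconditional computation, so the second half of the paper's argument (the Nakayama functor applied to the presentation and the top/socle analysis) is not needed. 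What the paper's route buys in exchange is that it stays entirely at the level of module-theoretic manipulations of tops, socles and the Nakayama functor, without invoking Serre duality on the derived category. Both proofs are complete; the only bookkeeping you should make explicit if this were written up is the (easy) verification that $J/\soc(J)$ and $\rad(P_i)$ are the same uniserial module, which uses $c_{i+1}>c_i\geq 2$ exactly as you indicate.
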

\begin{proof}
Suppose on contrary that $A$ is $\nu$-formal.
By assumption on $c_i$, the path starting from $i$ and ending at $i+c_i$ in $\vec\DynA_n$ is zero in $A$.
This implies that the $\mathrm{top}(I_{i+c_i})\cong S_{i+1}$, but $c_i<c_{i+1}$ also implies that $\mathrm{soc}(P_{i+1})\neq S_{i+c_i}$, so $I_{i+c_i}$ is non-projective, and there is an exact sequence $P_{i+c_i+1}\to P_{i+1}\to I_{i+c_i}\to 0$.

Since $I_{i+c_i}$ being non-projective says that $\nu_A(I_{i+c_i}) = H^0(\nu I_{i+c_i}) = 0$, applying $\nu_A = -\otimes_ADA$ to the above exact sequence we get that an exact sequence $I_{i+c_i+1}\to I_{i+1}\to 0$.
As $\mathrm{top}(I_{i+c_i+1})=S_{i+1}$, this exact sequence implies that $I_{i+1}= S_{i+1}$, which contradicts the fact that $A$ is ring-indecomposable (i.e. $I_1$ is the unique simple injective module).
\end{proof}

\begin{lemma}\label{lem-non-nf-naka2}
Suppose $A$ is a ring-indecomposable quotient algebra of $K\vec\DynA_n$.
If there exists $1<i<n$ such that $c_{i-1}>c_i\leq c_{i+1}$, then $A$ is not $\nu$-formal.
\end{lemma}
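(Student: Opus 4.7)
The plan is to exhibit an indecomposable projective $A$-module whose image under $\nu^{-1}$ has non-trivial cohomology in at least two distinct degrees; this prevents $\nu^{-1}(A)$ from being a direct sum of stalk complexes, contradicting $\nu$-formality. First I would dispose of the easy case: if $c_i<c_{i+1}$ then Lemma \ref{lem-non-nf-naka1} applies directly, so I may assume $c_i=c_{i+1}$; combined with $c_{i-1}>c_i$ and the admissibility bound $c_{i-1}\leq c_i+1$, this forces $c_{i-1}=c_i+1$. Set $c:=c_i$.

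The candidate counterexample is $P_i$. Since $\soc(P_i)=S_{i+c-1}$, any projective-injective structure on $P_i$ would give $P_i\cong I_{i+c-1}$; but $P_{i-1}$ is itself a length-$(c+1)$ uniserial module with socle $S_{i+c-1}$ (as $c_{i-1}=c+1$), so it embeds into $I_{i+c-1}$, forcing the length of $I_{i+c-1}$ to be at least $c+1$, strictly exceeding the length $c$ of $P_i$. Hence $P_i$ is not injective and $d:=\idim P_i\geq 1$. I would then take the minimal injective coresolution $0\to P_i\to I^0\to\cdots\to I^d\to 0$; since cosyzygies over Nakayama algebras are uniserial, each $I^k$ is an indecomposable injective. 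Moreover, in this setting $\nu_A(P_j)=I_j$, so $\nu_A^-(I_j)=P_j$. Consequently $\nu^{-1}(P_i)$ is represented in $\mathrm{perf}(A)$ by the complex of indecomposable projectives
\[
[\nu_A^-(I^0)\to \nu_A^-(I^1)\to \cdots \to \nu_A^-(I^d)],
\]
whose $k$-th cohomology equals $\Ext^k_A(DA,P_i)$.

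The heart of the argument is to show $H^0\neq 0$ and $H^d\neq 0$. For $H^0=\Hom_A(DA,P_i)$: the equality $c_{i+1}=c$ pins the injective envelope of $S_{i+c}$ down to length exactly $c$ with top $S_{i+1}$, giving an isomorphism $I_{i+c}\cong P_{i+1}$; and the one-dimensional space $\Hom_A(P_{i+1},P_i)\cong e_i A e_{i+1}$ is spanned by the non-zero map given by multiplication by $\alpha_i$, so $\Hom_A(DA,P_i)\supseteq\Hom_A(I_{i+c},P_i)\neq 0$. For $H^d$: the last differential $I^{d-1}\to I^d$ is surjective (by exactness at $I^d$) but not an isomorphism (by minimality of the coresolution, else truncation would yield a shorter one). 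Since both are indecomposable finite-dimensional modules and a surjective endomorphism of such a module is an automorphism, $I^{d-1}$ and $I^d$ cannot be isomorphic. Applying $\nu_A^-$ yields a morphism $P_a\to P_b$ between two non-isomorphic indecomposable projectives, necessarily given by multiplication by a non-trivial path in $\rad A$; its image therefore lies in $\rad P_b$, and the cokernel surjects onto the simple top $S_b\neq 0$.

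Finally, since $\nu^{-1}$ is an auto-equivalence of $\Db(\mod A)$ and $P_i$ is indecomposable, $\nu^{-1}(P_i)$ is itself indecomposable. A direct sum of stalk complexes is indecomposable only if it reduces to a single stalk, whose cohomology is concentrated in a single degree---contradicting $H^0\neq 0$ and $H^d\neq 0$ with $d\geq 1$. Hence $\nu^{-1}(P_i)$, and therefore $\nu^{-1}(A)$, fails to be a direct sum of stalks, so $A$ is not Serre-formal. The most delicate point will be pinning down the non-isomorphism of $I^{d-1}$ and $I^d$: this rests on the uniseriality of Nakayama cosyzygies (securing indecomposability of each $I^k$) combined with the finite-dimensional Fitting-type fact that a surjection between isomorphic indecomposable modules is automatically an isomorphism --- without this, one could not exclude a potential surjective collapse that would force $H^d=0$ and wreck the argument.
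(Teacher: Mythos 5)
Your proof is correct, but it takes a genuinely different route from the paper's. The paper handles this lemma in a few lines by passing to the opposite algebra: the hypotheses force $\mathrm{top}(I_{i+c_i})=S_{i+1}$ (so $I_{i+c_i}$ has length $c_i$) while $\mathrm{top}(I_{i+c_i-1})=S_j$ for some $j\leq i-1$ (so $I_{i+c_i-1}$ is strictly longer); hence the Kupisch series of $A^{\mathrm{op}}$ satisfies condition (1) of Lemma \ref{lem-naka-3types}, and since $A$ is $\nu$-formal if and only if $A^{\mathrm{op}}$ is (Example \ref{eg-alg}(1)), Lemma \ref{lem-non-nf-naka1} applied to $A^{\mathrm{op}}$ finishes the argument. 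You instead reduce to the case $c_{i-1}=c_i+1$, $c_i=c_{i+1}$, and compute $\nu^{-1}(P_i)$ head-on: $H^0=\Hom_A(DA,P_i)\neq0$ via the identification $I_{i+c_i}\cong P_{i+1}$, and $H^d\neq0$ for $d=\idim P_i\geq1$ because the last differential of the minimal injective coresolution becomes, under $\nu_A^-$, a map between non-isomorphic indecomposable projectives whose image lies in the radical. All the individual steps check out, including the non-injectivity of $P_i$ (via the embedding $P_{i-1}\hookrightarrow I_{i+c_i-1}$), the indecomposability of each term of the coresolution (uniseriality of cosyzygies over Nakayama algebras), and the Fitting-type exclusion of $I^{d-1}\cong I^d$. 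What the paper's route buys is brevity and reuse of the previous lemma; what yours buys is a self-contained, explicit exhibition of the two degrees in which the cohomology of $\nu^{-1}(P_i)$ fails to concentrate. Note, though, that your argument still invokes Lemma \ref{lem-non-nf-naka1} for the subcase $c_i<c_{i+1}$, so it is not fully independent of it either.
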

\begin{proof}
As in the proof of Lemma \ref{lem-non-nf-naka1}, the assumption $c_i\leq c_{i+1}$ says that $\mathrm{top} I_{i+c_i}=S_{i+1}$.
On the other hand, the assumption $c_{i-1}>c_i$ says that $\mathrm{top} I_{i+c_i-1}=S_j$ for some $j\leq i-1$.
Therefore, the length of $I_{i+c_i-1}$ is larger than that of $I_{i+c_i}$.
This means that the opposite ring $A^\mathrm{op}$ is a ring-indecomposable quotient algebra of $K\vec\DynA_n$ satisfying the condition (1) of Lemma \ref{lem-naka-3types}, and we can apply Lemma \ref{lem-non-nf-naka1}.
\end{proof}

\begin{lemma}\label{lem-non-nf-naka3}
Consider $A= T_{n,l}$ with $n=lt+r$ for some $0\leq r< l$ and $t\geq 1$.
If $l\neq 2$ and $r\neq 1$, then $A$ is not $\nu$-formal.
\end{lemma}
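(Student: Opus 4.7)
The plan is to exhibit an indecomposable projective $P_i$ and an integer $k \geq 1$ such that $\nu^{-k}(P_i) \in \Db(\mod A)$ is not isomorphic to a direct sum of stalk complexes, contradicting $\nu$-formality of $A = T_{n,l}$. The argument has two ingredients.

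First I identify a class of \emph{bad} simple modules. For any $j$ with $2 \leq j \leq l$, the modules $I_j/S_j$ and $I_{j-1}$ are both uniserial of length $j-1$ with top $S_1$ and socle $S_{j-1}$, so they are isomorphic as Nakayama modules. Hence $S_j$ has the short minimal injective coresolution $0 \to S_j \to I_j \to I_{j-1} \to 0$. Applying $\nu_A^{-} = \Hom_A(DA,-)$ and using $\nu_A^-(I_i) = P_i$ yields the two-term complex $\varphi\colon P_j \to P_{j-1}$, where $\varphi$ is induced by left multiplication by the arrow $a_{j-1}$. A direct path-basis computation gives $\ker(\varphi) = \soc(P_j) = S_{j+l-1}$ and $\mathrm{Coker}(\varphi) = P_{j-1}/\rad(P_{j-1}) = S_{j-1}$, both non-zero under the hypothesis $l \geq 3$ and $n$ large enough that $S_{j+l-1}$ exists. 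Since $P_j$ is an indecomposable Nakayama module of length at least two, the socle inclusion $S_{j+l-1} \hookrightarrow P_j$ does not split off, and therefore $\varphi$ is not isomorphic to $H^0(\varphi) \oplus H^1(\varphi)[-1]$ in $\Db(\mod A)$; equivalently, $\nu^{-1}(S_j)$ is not a stalk complex.

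Second, I track the iterated $\nu^{-1}$-orbit of $A$. In the body of the algebra (the projective-injective range $1 \leq i \leq n-l+1$) we have $P_i \cong I_{i+l-1}$, so $\nu^{-1}(P_i) = P_{i+l-1}$ is a stalk in degree zero. Once the orbit enters the tail $\{n-l+2,\ldots,n\}$ of non-injective projectives of lengths $l-1,l-2,\ldots,1$, computing $\nu^{-1}(P_i)$ via the minimal injective coresolution of $P_i$ (which has length at most $l-1$) produces a shifted stalk complex, and further iterations produce shifted simple modules. The arithmetic of the orbit, using $n = lt + r$, shows that the condition $r \neq 1$ forces some iterate $\nu^{-k}(P_i)$ to land on a shifted bad simple $S_j[-d]$ with $2 \leq j \leq l$; this is illustrated in $T_{5,3}$ (where $\nu^{-2}(P_2) = S_2[-2]$) and in $T_{6,3}$ (where $\nu^{-7}(P_1) = S_2[-6]$). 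One more application of $\nu^{-1}$ then produces a non-formal complex by the first ingredient. In contrast, when $r = 1$ the orbit stays away from the bad range and lands only on the ``safe'' simples $S_1$ or $S_n$, yielding the higher-representation-finite case $T_{lt+1,l}$.

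The main technical obstacle is the combinatorial tracking in the second ingredient across the possible residues $r \in \{0, 2, 3, \ldots, l-1\}$. A case-by-case analysis suffices, but one can also argue uniformly by noting that the effect of $\nu^{-1}$ on indices (modulo the identifications at the body/tail boundary) is a shift by $l-1$, so the orbit covers every residue class modulo $\gcd(l-1, n-1)$, and the bad range $\{2, \ldots, l\}$ is hit unless $r = 1$.
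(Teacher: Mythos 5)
Your overall strategy --- iterate $\nu^{-1}$ on an indecomposable projective until one reaches an indecomposable object of $\Db(\mod A)$ whose cohomology lives in two distinct degrees --- is the same as the paper's, and your first ingredient is essentially sound: for $2\le j\le l$ with $j+l-1\le n$ one gets $\nu^{-1}(S_j)\cong(P_j\xrightarrow{\varphi}P_{j-1})$ with $H^0=\soc(P_j)\neq 0$ and $H^1=S_{j-1}\neq 0$. Two remarks, though. The correct reason this rules out formality is not the non-splitting of the socle inclusion; it is that $\nu^{-1}(S_j)$ is indecomposable (the image of an indecomposable object under an equivalence), and an indecomposable complex with cohomology in two degrees cannot be a direct sum of stalk complexes. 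Also, the proviso ``$n$ large enough'' is doing real work: if $j+l-1>n$ then $\ker\varphi=0$ and $\nu^{-1}(S_j)$ \emph{is} a stalk complex (e.g.\ $T_{l,l}=K\vec\DynA_l$ is hereditary, hence $\nu$-formal), so you must verify that the simple you eventually reach satisfies this inequality.

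The genuine gap is in the second ingredient. The assertion that once the orbit leaves the projective-injective range it ``produces shifted simple modules'' and that the arithmetic of $n=lt+r$ forces it onto a bad simple $S_j$ with $2\le j\le l$ is neither proved nor true in general. What the paper actually does is compute the minimal injective coresolution of one specific projective, $P_{n-r+1}$ (resp.\ $P_n$ when $r=0$), and finds $\nu^{-1}(P_{n-r+1})\cong M[-2t]$ with $M\cong P_r/\rad^{l-r}P_r$ uniserial of length $l-r$ --- simple only when $r=l-1$ --- and then verifies directly that $\nu^{-1}(M)$ has nonzero $H^0$ and $H^1$. For $l\ge 4$ and $2\le r\le l-2$ (e.g.\ $T_{10,4}$, where the orbit of $P_9$ reaches the length-two module $(P_2/\rad^2P_2)[-4]$) the obstruction appears at a non-simple module and no bad simple is reached along that orbit; both of your examples have $l=3$, the one value for which every bad residue ($r=0$ or $r=l-1$) does land on a simple. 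The proposed ``uniform'' argument (index shift by $l-1$, residues modulo $\gcd(l-1,n-1)$) only describes the orbit inside the projective-injective range and ignores the cohomological shifts and the non-simple cosyzygies that occur in the tail, so it neither identifies which projective to start from nor shows that a bad simple is hit. To close the gap you must either enlarge your class of ``bad'' modules to the uniserial quotients $P_r/\rad^{l-r}P_r$ and compute the coresolutions explicitly, as the paper does, or genuinely prove the orbit claim for some projective --- neither is done in the proposal.
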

\begin{proof}
First assume that $r\notin \{0,1\}$, which also forces $l\neq 2$.
Observe that by our assumption, the minimal injective coresolution of $P_{n-r+1}$ is of the form
\[
0\to P_{n-r+1} \to I_n \to I_{n-r} \to I_{n-l} \to I_{n-l-r} \to I_{n-2r}\to \cdots \to I_{l+r}\to I_l\to I_r\to 0.
\]
More precisely, the sequence $(I^k)_{k\geq 0}$ of injective modules in the coresolution is given by
\[
I^k = \begin{cases}
I_{n-lj}, & \text{if }k=2j\leq 2t; \\
I_{n-lj-r}, & \text{if }k=2j+1 < 2t;\\
0, & \text{otherwise.}
\end{cases}
\]
Applying $\nu_A^-$ to this exact sequence, we see that $\nu^{-1}(P_{n-r+1}) \cong M[-2t]$ where $M\cong P_r/\rad^{l-r}P_r$.

Now we have an exact sequence $0\to M\to I_{l-1}\to I_{r-1}\to 0$ given by the minimal injective coresolution of $M$.
Applying $\nu_A^-$ we see that $H^i(\nu^{-1}(M))\neq 0$ for both $i=0$ and $i=1$.
Therefore, $\nu^{-2}(P_{n+r-1})[2t]=\nu^{-1}(M)$ is not a stalk complex and thus, $A$ is not $\nu$-formal.

Assume now that $r=0$ and $l\neq 2$.
We carry out a similar calculation starting with $P_n$.
The minimal injective coresolution of $P_n$ is given by
\[
0 \to P_n \to I_n \to I_{n-1} \to I_{n-l} \to I_{n-l-1} \to I_{n-2l} \to \cdots \to I_l \to I_{l-1} \to 0,
\]
i.e. for all $0\leq j< t$, the $2j$-th term is given by $I_{n-l(j-1)}$ and the $(2j+1)$-st term is given by $I_{n-lj-1}$.
Applying $\nu_A^-$ to this exact sequence, we see that $\nu^{-1}(P_n)\cong S_{l-1}[1-2t]$.

Now we have exact sequence $0\to S_{l-1}\to I_{l-1}\to I_{l-2}\to 0$.
Since $l\neq 2$, $\nu^{-1}(S_{l-1}) = (0\to P_{l-1}\to P_{l-2}\to 0)$ has non-zero cohomology at both degrees 0 and 1.
Hence, $\nu^{-2}(P_n)[2t-1] = \nu^{-1}(S_{l-1})$ is not a stalk complex, which means that $A$ is not $\nu$-formal.
\end{proof}

Finally, we note a recent result of Vaso, which classifies $d$-representation-finite quotients of $K\vec\DynA_n$.
\begin{theorem}{\rm \cite{Vas}}\label{thm-vaso}
A quotient algebra $A$ of $K\vec\DynA_n$ is $d$-representation-finite if, and only if, $A\cong T_{n,l}$ with $l=2$ or $l$ divides $n-1$; in which case, we have $d=2(n-1)/l$ respectively.
\end{theorem}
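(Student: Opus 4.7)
The plan is to split the proof into necessity and sufficiency.

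For necessity, suppose $A$ is a ring-indecomposable quotient of $K\vec\DynA_n$ which is $d$-representation-finite for some $d \geq 1$. Then $A$ is $d$-hereditary, and hence $\nu$-formal by the discussion following Definition \ref{def-dhered}. Applying Lemma \ref{lem-naka-3types} to the Kupisch series of $A$, we have three possible cases; Lemma \ref{lem-non-nf-naka1} rules out case (1) and Lemma \ref{lem-non-nf-naka2} rules out case (2), since both would contradict $\nu$-formality. Hence $A \cong T_{n,l}$ for some $2 \leq l \leq n$. Writing $n = lt + r$ with $0 \leq r < l$ and $t \geq 1$, Lemma \ref{lem-non-nf-naka3} then forces either $l = 2$ or $r = 1$; the latter condition is precisely $l \mid n-1$.

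For sufficiency, I would directly verify the definition (Definition \ref{def-d-hered}) for $T_{n,l}$ in the two admissible cases, with $d := 2(n-1)/l$. First, compute $\gldim T_{n,l}$ from its Kupisch series using the standard formulae for minimal projective resolutions of simples in Nakayama algebras (see e.g.\ \cite{Mar}); a direct count reveals that the longest such resolution has length exactly $d$. Second, for each indecomposable projective $P_i$, compute its orbit $P_i, \nu^{-1}(P_i), \nu^{-2}(P_i), \ldots$ in $\Db(\mod A)$ using the same iterative technique as in the proof of Lemma \ref{lem-non-nf-naka3}: at each step, take the minimal injective coresolution and apply $\nu_A^{-}$. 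In the favourable regime ($l=2$ or $r=1$), the regularity of the Kupisch series ensures that the cohomological shift increments by exactly $d$ per step, and that after finitely many iterations the orbit terminates at an indecomposable injective module placed in degree $-d t_{P_i}$ for some $t_{P_i} \geq 0$.

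The main obstacle lies in the explicit combinatorial book-keeping in the sufficiency direction, particularly when $l \mid n-1$ with $l > 2$. One must carefully track both the quiver position and the cohomological degree under each application of $\nu^{-1}$, and confirm that the orbit closes on an injective at precisely the right accumulated shift. The case $l = 2$ reduces to the $(n-1)$-representation-finite Nakayama algebra $K\vec\DynA_n/\rad^2$, which is already well-documented in the higher Auslander-Reiten literature. The general case can plausibly be attacked by induction on $t$ (where $n = lt+1$), with the base case $t = 1$ (i.e.\ the hereditary linear Nakayama quiver) immediate from the classical hereditary results already exploited in Section \ref{sec-Hered-base}.
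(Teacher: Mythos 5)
This theorem is quoted from Vaso \cite{Vas}; the paper does not prove it itself, but the remark following Proposition \ref{prop-naka-1} spells out an alternative proof, and your necessity half coincides with it exactly: a $d$-representation-finite quotient is (periodically) $\nu$-formal, and Lemmas \ref{lem-naka-3types}, \ref{lem-non-nf-naka1}, \ref{lem-non-nf-naka2} and \ref{lem-non-nf-naka3} then force $A\cong T_{n,l}$ with $l=2$ or $l\mid n-1$. That half is fine.

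The genuine gap is the sufficiency direction, which is where all the content of the theorem lies. You only describe what would have to be verified (``carefully track both the quiver position and the cohomological degree under each application of $\nu^{-1}$, and confirm that the orbit closes''), and you yourself flag this bookkeeping as the main obstacle; it is not carried out, and for $l\mid n-1$ with $l>2$ it is genuinely delicate because a single application of $\nu^{-1}$ to a projective can produce a large shift (compare the computation $\nu^{-1}(P_{n-r+1})\cong M[-2t]$ inside the proof of Lemma \ref{lem-non-nf-naka3}), so the claim that ``the shift increments by exactly $d$ per step'' is precisely what needs proof. The paper's route sidesteps the orbit computation entirely: Lemma \ref{lem-recursion} gives a two-term recursion for $\idim$ and $\domdim$ of every indecomposable over $T_{N,l}$, Proposition \ref{prop-naka-1} deduces that $T_{n+l-1,l}$ is a $(d+1)$-Auslander algebra whose corresponding pair is $(T_{n,l},\,T_{n,l}\oplus DT_{n,l})$ exactly when $l=2$ or $l\mid n-1$, and the higher Auslander correspondence \cite{Iya1} together with \cite[Thm 1.23]{Iya2} then yields that $T_{n,l}\oplus DT_{n,l}$ is a $d$-cluster-tilting module, i.e.\ $T_{n,l}$ is $d$-representation-finite with $d=2(n-1)/l$. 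A self-contained proof should either import that argument or actually perform the recursion. Finally, your proposed induction base is off: for $n=lt+1$ with $t=1$ the algebra is $T_{l+1,l}=K\vec\DynA_{l+1}/\rad^{l}(K\vec\DynA_{l+1})$, which is $2$-representation-finite rather than hereditary, and the hereditary algebra $K\vec\DynA_n$ itself ($n\geq 3$) is a $1$-representation-finite quotient that does not occur in the classification at all, so the hereditary results of Section \ref{sec-Hered-base} cannot serve as the starting point of the induction.
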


The following classification result now follows easily.
\begin{theorem}\label{thm-nu-formal-naka}
Suppose $A$ is a ring-indecomposable quotient algebra of $K\vec\DynA_n$.
Then the following are equivalent.
\begin{enumerate}[(1)]
\item $A$ is $\nu$-formal.
\item $A$ is periodically $\nu$-formal.
\item $A$ is $d$-hereditary for some $d\geq 1$.
\item $A$ is $d$-representation-finite for some $d\geq 1$.
\item $A\cong T_{n,l} = K\vec\DynA_n / \rad^l(K\vec\DynA_n)$ with $l=2$ or $l$ divides $n-1$.
\end{enumerate}
\end{theorem}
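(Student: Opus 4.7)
The plan is to close a cycle of implications. Most of the implications are essentially formal consequences of material recalled just before the theorem: $(5) \Rightarrow (4)$ is Theorem \ref{thm-vaso} of Vaso; $(4) \Rightarrow (3)$ holds by comparing Definitions \ref{def-d-hered} and \ref{def-dhered}; $(3) \Rightarrow (1)$ is the statement recorded after Definition \ref{def-d-hered} that any $d$-hereditary algebra is $\nu$-formal; $(4) \Rightarrow (2)$ is the remark that $d$-representation-finite algebras are periodically $\nu$-formal; and $(2) \Rightarrow (1)$ is immediate from the definition. This reduces everything to the single non-trivial implication $(1) \Rightarrow (5)$.

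For $(1) \Rightarrow (5)$, I would use Lemma \ref{lem-naka-3types} to split the Kupisch series $[c_1,\ldots,c_n]$ of $A$ into three exhaustive cases. In case (1), where $c_i < c_{i+1}$ for some $i < n$, Lemma \ref{lem-non-nf-naka1} contradicts the assumed $\nu$-formality of $A$. In case (2), where $c_{i-1} - 1 = c_i = c_{i+1}$ for some $1 < i < n$, one has in particular $c_{i-1} > c_i \leq c_{i+1}$, so Lemma \ref{lem-non-nf-naka2} again yields a contradiction. Hence only case (3) survives, giving $A \cong T_{n,l}$ for some $l \leq n$. Writing $n = lt + r$ with $0 \leq r < l$ and $t \geq 1$, Lemma \ref{lem-non-nf-naka3} rules out the possibility $l \neq 2$ and $r \neq 1$, so either $l = 2$ or $r = 1$; the latter is equivalent to $l \mid n-1$ via the identity $n - 1 = lt + (r-1)$. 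This is precisely condition (5).

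Since the serious representation-theoretic input — the explicit analysis of minimal injective coresolutions for Nakayama algebras — has already been carried out in Lemmas \ref{lem-non-nf-naka1}--\ref{lem-non-nf-naka3}, the proof of the theorem itself reduces to the combinatorial bookkeeping sketched above together with the cited references. I do not expect any additional obstacle in assembling the argument.
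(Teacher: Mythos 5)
Your proposal is correct and follows essentially the same route as the paper: the paper likewise disposes of the implications among (2)--(5) by citing Theorem \ref{thm-vaso} and the trivial implications $(4)\Rightarrow(2)\Rightarrow(1)$, and reduces the real content to the equivalence of (1) and (5), obtained exactly as you describe by combining the trichotomy of Lemma \ref{lem-naka-3types} with Lemmas \ref{lem-non-nf-naka1}, \ref{lem-non-nf-naka2} and \ref{lem-non-nf-naka3}. The only minor difference is that you justify $(4)\Rightarrow(3)$ by ``comparing definitions,'' which understates it slightly --- that implication is the nontrivial fact that $d$-representation-finite algebras are $d$-hereditary (cf.\ \cite{HIO}, \cite{Iya2}) rather than a formal consequence of Definitions \ref{def-d-hered} and \ref{def-dhered} --- whereas the paper folds this into the cited equivalence of (3), (4) and (5).
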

\begin{proof}
The implication form (4) to (2) (hence, to (1)) is trivial.
Theorem \ref{thm-vaso} gives the equivalence between (3), (4), and (5).
The equivalence between (1) and (5) follows from combining Lemma \ref{lem-naka-3types} with Lemma \ref{lem-non-nf-naka1}, \ref{lem-non-nf-naka2}, and \ref{lem-non-nf-naka3}.
\end{proof}

\subsection{Further example II: Higher canonical algebra}\label{subsec-NF-canonical}
We give another class of $\nu$-formal algebras with finite global dimension, but not necessarily $d$-hereditary.
This class of algebra is called \emph{$d$-canonical algebra} (for some integer $d\geq 1$), and is first studied in \cite{HIMO} as a generalisation of the classical canonical algebras \cite{Rin,GeiLen}.

To keep exposition concise, we will give minimal detail about many of the results we use, but refer the reader to \cite{HIMO}.

Fix positive integers $n\geq 1$ and $d\geq -1$.
Let $R$ be a so-called Geigle-Lenzing complete intersection ring of Krull dimension $d+1$ with weights $p_1, p_2, \ldots, p_n$, where
each $p_i$ is a positive integer with $p_i\geq 2$.
Then there is an abelian category $\mathrm{coh}\X$ given by the coherent sheaves on the associated Geigle-Lenzing projective space $\X$.
Denote by $\Ocal$ the structure sheaf of $\X$.

Define an abelian group $\Ll$ generated by $\vec x_1, \vec x_2, \ldots, \vec x_n, \vec c$ with relations $p_i\vec x_i - \vec c$ for all $1\leq i\leq n$.
This group acts on $\mathrm{coh}\X$ naturally, i.e. there are auto-equivalences $?(\vec x)$ on $\mathrm{coh}\X$ for any $\vec x\in \Ll$ with inverse $?(-\vec x)$.
There is a natural poset structure on the abelian group $\Ll$ generated by the underlying monoid, i.e. $\vec x \geq \vec y$ if $\vec x-\vec y$ is an element of the submonoid generated by $\vec x_i$'s and $\vec c$.
We use the usual notation $[\vec x,\vec y]$ to denote the interval $\{\vec z\mid \vec x\leq \vec z\leq \vec y\}$ under this poset structure.

By \cite[Theorem 6.1]{HIMO}, the \emph{$d$-canonical algebra} of $\X$ is the endomorphism ring $\End_\X(T)$ of a tilting object $T=\bigoplus_{\vec x\in [0, d\vec c]} \Ocal(\vec x)$ in $\D$.
For $d=1$, this is precisely the canonical algebra of Ringel \cite{GeiLen}.
The following is the main result of this subsection.

\begin{theorem}\label{thm-canonical}
Any $d$-canonical algebra is $\nu$-formal.
\end{theorem}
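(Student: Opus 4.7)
The plan is to transfer the question to $\D^b(\mathrm{coh}\X)$ via the tilting equivalence induced by $T = \bigoplus_{\vec x \in [0,d\vec c]}\mathcal{O}(\vec x)$. By \cite[Theorem 6.1]{HIMO}, $F := \RHom_\X(T,-)$ is a derived equivalence $\D^b(\mathrm{coh}\X) \xrightarrow{\sim} \D^b(\mod A)$, which intertwines the Serre functor $\nu$ on $\mathrm{perf}(A)$ with $?(\vec\omega)[d]$ on $\D^b(\mathrm{coh}\X)$, where $\vec\omega \in \Ll$ denotes the dualizing element of $\X$. For each indecomposable projective $P_{\vec x} = F(\mathcal{O}(\vec x))$ (with $\vec x \in [0,d\vec c]$) and each $k \in \mathbb{Z}$, this gives
\[ \nu^k(P_{\vec x}) \;\cong\; F(\mathcal{O}(\vec x + k\vec\omega))[kd]. \]
Proving $A$ is Serre-formal therefore reduces to showing that $F(\mathcal{O}(\vec y))$ is a stalk complex in $\D^b(\mod A)$ for every $\vec y \in \Ll$.

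By definition,
\[ F(\mathcal{O}(\vec y)) \;=\; \bigoplus_{\vec x' \in [0,d\vec c]} \mathsf{R}\Gamma(\X, \mathcal{O}(\vec y - \vec x')). \]
I would then invoke the cohomology theory of line bundles on Geigle-Lenzing projective spaces from \cite{HIMO}: $H^i(\X, \mathcal{O}(\vec z)) = 0$ for $0 < i < d$; $H^0(\X, \mathcal{O}(\vec z)) \ne 0$ iff $\vec z \ge 0$; and, by Serre duality, $H^d(\X, \mathcal{O}(\vec z)) \ne 0$ iff $\vec z \le \vec\omega$. So only degrees $0$ and $d$ can contribute to $F(\mathcal{O}(\vec y))$: the degree-$0$ part is non-zero precisely when $\vec y \ge 0$ (take $\vec x' = 0$), and the degree-$d$ part is non-zero precisely when $\vec y \le \vec\omega + d\vec c$ (take $\vec x' = d\vec c$).

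The crux of the argument is the claim that these two conditions are mutually exclusive, i.e., $\vec\omega + d\vec c \not\ge 0$ in $\Ll$. I would derive this from the tilting property $\Ext^d_\X(T,T) = 0$: applied to the summands $\mathcal{O}(0)$ and $\mathcal{O}(d\vec c)$ of $T$, it gives $H^d(\X, \mathcal{O}(-d\vec c)) = 0$, which by Serre duality translates to $\vec\omega - (-d\vec c) = \vec\omega + d\vec c \not\ge 0$. Consequently, for every $\vec y \in \Ll$ at most one of the two conditions above holds, so $F(\mathcal{O}(\vec y))$ is concentrated in a single cohomological degree (or vanishes), and hence each $\nu^k(A)$ is a direct sum of stalk complexes, establishing Serre-formality. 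The main technical inputs I expect to lean on are the cohomology vanishing and non-vanishing criteria for line bundles on $\X$ provided by \cite{HIMO}; once those are in hand, the remainder is a clean consequence of tilting theory, Serre duality, and the poset structure on $\Ll$.
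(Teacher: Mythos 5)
Your proposal is correct and follows essentially the same route as the paper: transfer to $\Db(\mathrm{coh}\X)$ via the tilting object $T$, use the vanishing of $\Ext^i_\X$ between line bundles for $0<i<d$ together with the non-vanishing criteria for $\Hom$ and $\Ext^d$ (via Serre duality), and conclude by showing the two non-vanishing conditions cannot hold simultaneously because $d\vec c+\vec\omega\not\geq 0$. The only (minor, and perfectly valid) deviation is that you derive $d\vec c+\vec\omega\not\geq 0$ from the tilting property $\Ext^d_\X(T,T)=0$, whereas the paper quotes it directly from \cite[Observation 3.1(c)]{HIMO}.
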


Define an element $\vec\omega$ of $\Ll$ by
\[
\vec\omega := (n-d-1) \vec c - \sum_{i=1}^n \vec x_i.
\]

By \cite[Theorem 5.4]{HIMO}, $\mathrm{coh}\X$ has global dimension $d$ and the Serre functor $\nu$ on the bounded derived category $\D:=\Db(\mathrm{coh} \X)$ is given by the composition of twist by $\vec \omega$ and shift by $d$, i.e. $\nu\cong (\vec\omega)[d]$.

Since there is an equivalence of triangulated categories $\Db(\mod A)\simeq\D$ given by $\RHom_\X(T,-)$, for the purpose of proving Theorem \ref{thm-canonical}, it suffices to understand various vanishing conditions of the Hom-space $\Hom_\D(\Ocal, \Ocal(\vec x)[l]) = \Ext_\X^l(\Ocal,\Ocal(\vec x))$ for $\vec x\in \Ll$ and $l\in \Z_{\geq 0}$.
The following result will be useful.

\begin{proposition}\label{prop-canonical}{\rm \cite[Observation 3.1(c), Proposition 5.3]{HIMO}}
\begin{enumerate}
\item $\Ext_\X^i(\Ocal(\vec x),\Ocal(\vec y))\neq 0$ implies that $i=0$ or $d$.
\item $\Hom_\X(\Ocal,\Ocal(\vec x))\neq 0$ if, and only if, $\vec x\geq 0$.
\item $\Ext_\X^d(\Ocal,\Ocal(\vec x))\neq 0$ if, and only if, $\vec\omega-\vec x\geq 0$.
\end{enumerate}
\end{proposition}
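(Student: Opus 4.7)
The plan is to reduce all three statements to $\Ext$ groups with first argument $\Ocal$. Since the twist $?(\vec x)$ is an auto-equivalence of $\mathrm{coh}\X$, the natural isomorphism $\Ext^i_\X(\Ocal(\vec x),\Ocal(\vec y))\cong \Ext^i_\X(\Ocal,\Ocal(\vec y-\vec x))$ obtained by twisting by $-\vec x$ lets me set $\vec z:=\vec y-\vec x$ and work only with the latter groups throughout.

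For (2), I would invoke the fundamental identification of $\bigoplus_{\vec z\in\Ll}\Hom_\X(\Ocal,\Ocal(\vec z))$ with the Geigle-Lenzing ring $R$ as an $\Ll$-graded $K$-algebra; this is intrinsic to how the line bundles $\Ocal(\vec z)$ and $\X$ itself are constructed from $R$. Since $R$ is defined as a quotient of the $\Ll$-graded polynomial algebra on generators of degrees $\vec x_1,\ldots,\vec x_n,\vec c$ by a regular sequence of elements sitting in degree $\vec c$, the graded piece $R_{\vec z}$ is nonzero precisely when $\vec z$ lies in the submonoid of $\Ll$ generated by $\vec x_1,\ldots,\vec x_n,\vec c$, which by definition of the poset on $\Ll$ means exactly $\vec z\geq 0$. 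Part (3) then follows from (2) via Serre duality in $\D=\Db(\mathrm{coh}\X)$: since $\nu=(\vec\omega)[d]$, one obtains
\[
\Ext^d_\X(\Ocal,\Ocal(\vec z))=\Hom_{\D}(\Ocal,\Ocal(\vec z)[d])\cong D\Hom_{\D}(\Ocal(\vec z)[d],\nu\Ocal)=D\Hom_\X(\Ocal(\vec z),\Ocal(\vec\omega))=DR_{\vec\omega-\vec z},
\]
which is nonzero iff $\vec\omega-\vec z\geq 0$ by the characterization of nonzero graded pieces of $R$ just proved.

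The main obstacle is part (1), the vanishing of intermediate $\Ext$. My plan is to exploit the Cohen-Macaulay property of $R$: because $R$ is a complete intersection of Krull dimension $d+1$, its $\Ll$-graded local cohomology at the irrelevant ideal $\mathfrak{m}$ satisfies $H^i_\mathfrak{m}(R)=0$ for all $i\neq d+1$. Via the standard Serre-type correspondence between $\mathrm{coh}\X$ and the Serre quotient of $\Ll$-graded $R$-modules by finite-length modules, one has natural isomorphisms $\Ext^i_\X(\Ocal,\Ocal(\vec z))\cong (H^{i+1}_\mathfrak{m}(R))_{\vec z}$ for all $i\geq 1$, so Cohen-Macaulay vanishing immediately gives $\Ext^i_\X(\Ocal,\Ocal(\vec z))=0$ for $0<i<d$, which is exactly (1). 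The genuinely technical points here -- setting up the $\Ll$-graded local cohomology formalism in the Geigle-Lenzing setting, and verifying that $R$ is indeed a complete intersection of the stated Krull dimension -- are developed in the foundational sections of \cite{HIMO} and would be cited rather than reproved.
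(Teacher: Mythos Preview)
The paper does not prove this proposition; it is quoted verbatim from \cite[Observation 3.1(c), Proposition 5.3]{HIMO} and used as a black box in the proof of Theorem \ref{thm-canonical}. So there is no proof in the paper to compare against.

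That said, your sketch is essentially the argument carried out in \cite{HIMO}. The reduction to first argument $\Ocal$ via the twist auto-equivalence is fine. For (2), the identification $\bigoplus_{\vec z}\Hom_\X(\Ocal,\Ocal(\vec z))\cong R$ and the determination of its support are exactly \cite[Observation 3.1(c)]{HIMO}; one small imprecision is that $R$ has $d+1$ generators $X_0,\ldots,X_d$ in degree $\vec c$ (not a single one), but this does not affect the conclusion about which $R_{\vec z}$ are nonzero. Your derivation of (3) from (2) via Serre duality $\nu\cong(\vec\omega)[d]$ is correct. For (1), the local cohomology argument is the right one: Cohen--Macaulayness of $R$ (it is a complete intersection of Krull dimension $d+1$) gives $H^i_{\mathfrak m}(R)=0$ for $i\neq d+1$, and the Serre-type comparison between sheaf cohomology on $\X$ and graded local cohomology of $R$ yields the vanishing of intermediate $\Ext$. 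This is how \cite[Proposition 5.3]{HIMO} proceeds. There is nothing to correct; you have simply supplied the content behind a citation.
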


\begin{proof}[Proof of Theorem \ref{thm-canonical}]
By construction, the equivalence $\D \simeq \Db(\mod A)$ given by $\RHom_\X(T,-)$ sends $\Ocal(\vec x)$ to the indecomposable projective $A$-modules $P_{\vec x}$ for all $\vec x\in [0,d\vec c]$.
Hence, for any $k,p\in \Z$ and $\vec x\in [0,d\vec c]$, we have an isomorphism $H^p(\nu^k P_{\vec x})\cong \Hom_\D(T,\nu^k\Ocal(\vec x)[p])$.
Therefore, we just need to show the this space is non-zero for at most one $p\in \Z$.

Note that we have isomorphisms
\[
\Hom_\D(T,\nu^k\Ocal(\vec x)[p]) \cong \Hom_\D(T,\Ocal(\vec x +k\vec\omega)[dk+p]) \cong \Ext_\X^{dk+p}(T,\Ocal(\vec x+k\vec\omega)).
\]
It follows from Proposition \ref{prop-canonical} (1) that this space is non-zero for at most two $p$'s; namely, when $p=-dk$ or $-d(k-1)$.

In the case when the space is non-zero at $p=-dk$, by Proposition \ref{prop-canonical} (2), we have $\vec x+k\vec\omega \geq \vec y \geq 0$ for some $\vec y\in [0,d\vec c]$.
On the other hand, when the space is non-zero at $p=-d(k-1)$, then we have the following isomorphism given by the Serre duality:
\[
\Hom_\D(T,\Ocal(\vec x+k\vec\omega)[d]) \cong D\Hom_\D(\Ocal(\vec x+k\vec\omega)[d],T(\vec\omega)[d]).
\]
Hence, we get that $\Hom_\D(\Ocal(\vec x+k\vec\omega),T(\vec\omega)) \cong \Hom_\X(\Ocal(\vec x+(k-1)\vec\omega), T)$ is also non-zero.
By Proposition \ref{prop-canonical} (3), 
we get that $\vec x+k\vec \omega\leq \vec z+\vec \omega$ for some $\vec z\in [0,d\vec c]$.
Thus, $\vec x+k\vec\omega \leq d\vec c+\vec\omega$.

Therefore, if $H^p(\nu^kP_{\vec x})$ is non-zero at more than one $p$, then we get that 
\[
(n-1)\vec c-\sum_{i=1}^n\vec x_i=d\vec c+\vec\omega \geq \vec x+k\vec\omega\geq 0,
\]
which is a contradiction (see \cite[Observation 3.1(c)]{HIMO}).
\end{proof}

It is not difficult to say slightly more than just $\nu$-formality for $d$-canonical algebras.

\begin{proposition}
A $d$-canonical algebra is periodically $\nu$-formal if, and only if, $\X$ is Calabi-Yau, i.e. $\vec\omega$ is torsion in $\Ll$.
\end{proposition}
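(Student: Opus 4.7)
The plan is to apply Proposition~\ref{prop-twistedCY} to reduce periodic $\nu$-formality to the twisted fractionally Calabi-Yau condition, and then to transport this condition to $\D = \Db(\mathrm{coh}\X)$ via the tilting equivalence $\RHom_\X(T,-) : \D \xrightarrow{\sim} \Db(\mod A)$. Under this equivalence $T$ corresponds to $A$, and by \cite{HIMO} the Serre functor on $\D$ is $(\vec\omega)[d]$. Hence $A$ is periodically $\nu$-formal if and only if there exist $m>0$ and $n\in\Z$ such that $T(m\vec\omega)[dm] \cong T[n]$ in $\D$.

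Since $T$ and $T(m\vec\omega)$ both lie in $\mathrm{coh}\X$ viewed as stalk complexes in degree zero, comparing cohomologies in each degree forces $n = dm$ and $T(m\vec\omega)\cong T$ in $\mathrm{coh}\X$. Decomposing $T = \bigoplus_{\vec x \in [0,d\vec c]} \Ocal(\vec x)$, applying Krull-Schmidt, and using the standard fact of the Geigle-Lenzing formalism that $\Ocal(\vec y_1) \cong \Ocal(\vec y_2)$ in $\mathrm{coh}\X$ if and only if $\vec y_1 = \vec y_2$ in $\Ll$, one deduces the equality of finite sets $[0,d\vec c] + m\vec\omega = [0,d\vec c]$ in $\Ll$. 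The translation $\vec x \mapsto \vec x + m\vec\omega$ is therefore a bijection of the finite set $[0,d\vec c]$ onto itself; letting $r \geq 1$ be its order yields $rm\vec\omega = 0$, so $\vec\omega$ is torsion in $\Ll$.

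The converse is immediate: if $k\vec\omega = 0$ for some $k > 0$, then $T(k\vec\omega) = T$ on the nose, giving $\nu^k(A) \cong A[dk]$ in $\Db(\mod A)$, and Proposition~\ref{prop-twistedCY} concludes that $A$ is periodically $\nu$-formal. The only non-formal input is the freeness of the $\Ll$-action on isomorphism classes of line bundles on $\X$, which is part of the standard Geigle-Lenzing framework; once this is granted, the argument is essentially a finiteness-plus-pigeonhole observation in $\Ll$, so no serious obstacle arises.
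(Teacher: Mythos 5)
Your proposal is correct and follows essentially the same route as the paper: reduce to the twisted fractionally Calabi--Yau condition via Proposition \ref{prop-twistedCY}, transport it across the tilting equivalence $\RHom_\X(T,-)$ using $\nu\simeq(\vec\omega)[d]$, and compare the summands of $T$ and $T(m\vec\omega)$. The only difference is cosmetic: where the paper tacitly reads off $k\vec\omega=0$ from the equality of intervals, you make the last step explicit via Krull--Schmidt and a finite-orbit argument, which is a fine (slightly more detailed) way to reach the same torsion conclusion.
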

\begin{proof}
By Proposition \ref{prop-twistedCY}, we want to show that a $d$-canonical algebra $A$ is twisted fractionally Calabi-Yau precisely when $\X$ is Calabi-Yau.
From the equivalence $\RHom_\X(T,-):\D\xrightarrow{\sim}\Db(\mod A)$, we get that $A$ is twisted fractionally Calabi-Yau if, and only if, there is some non-zero $k\in \Z$ so that $\nu^k(T)$ is isomorphic to a shift of $T$.
Since there is a natural isomorphism $\nu\simeq (\vec\omega)[d]$, we have 
\[
\nu^k(T)\cong \bigoplus_{\vec x\in [0,d\vec c]}\Ocal(\vec x+k\vec\omega)[dk] \cong \bigoplus_{\vec x\in [k\vec\omega, d\vec c+k\vec\omega]}\Ocal(\vec x)[dk]
\]
for all $k\in \Z$.
Therefore, $\nu^k(T)$ is isomorphic to a shift of $T$ if, and only if, $k\vec\omega=0$, i.e. $\vec\omega$ is torsion.
\end{proof}

\section{SGC extension algebras}\label{sec-SGC}

In \cite[Example 2]{Yam}, Yamagata considered an iterative construction of algebras, starting from hereditary algebras.
We call such a construction the \emph{SGC extensions} of the original algebra, where SGC stands for \underline{s}mallest \underline{g}enerator-\underline{c}ogenerator.
He observed that the global and dominant dimensions of the algebras in such a series is always finite (without detailed proof) - in fact, this inspires all the investigations presented in this article, and this section in particular.

We will first show a connection between SGC extension algebras and replicated algebras when the original algebra satisfies certain Hom-vanishing condition in subsection \ref{subsec-SGC-replicate}.
In particular, the two constructions almost always coincide when the base algebra is hereditary, which allows us to recover, and improve, the observation of Yamagata by utilising the results from the previous sections (namely, Theorem \ref{thm-dims}).
In subsection \ref{subsec-SGC-typeA}, we give explicit calculation on the homological dimensions of SGC extensions of certain algebras.
These calculations allow us to cover some of the ``bad" cases where Theorem \ref{thm-dims} and Theorem \ref{thm-dims2} fail to generalise to SGC extensions in a straightforward manner.
Finally, we give some further remarks about SGC extensions in subsection \ref{subsec-SGC-remarks}.

Let us start by defining SGC extensions properly.

\begin{definition}
Let $A$ be a finite dimensional $K$-algebra.
The \emph{SGC extension} of $A$ is the endomorphism ring $\End_A(A\oplus DA)$.
More generally, by taking $A^{[0]}:=A$, we define a series of algebras $\{A^{[m]}\}_{m\geq 0}$ iteratively by $A^{[m+1]}:=\End_{A^{[m]}}(A^{[m]}\oplus DA^{[m]})$.
The algebra $A^{[m]}$ is called the \emph{$m$-th SGC extension} of $A$.
For simplicity, the basic algebra of $A^{[m]}$ is  called the \emph{$m$-th basic SGC extension} of $A$.
\end{definition}

\subsection{Relation to replicated algebras}\label{subsec-SGC-replicate}
The following result shows that, when the projective and injective modules satisfy a specific condition, then the $m$-th basic SGC extension algebra is isomorphic to an idempotent truncation of the $m$-th replicated algebra.

\begin{theorem}\label{thm-SGC-replicate}
Let $A=A^{[0]}$ be a finite dimensional algebra and $e$ be an idempotent of $A$ so that $\add A \cap \add DA = \add (1-e)A$.
If $\Hom_A(DA,eA)=0$, then the $m$-th SGC extension $A^{[m]}$ of $A$ is Morita equivalent to an idempotent truncation 
\[
e^{[m]}A^{(m)}e^{[m]} =
\left(\begin{array}{cccccc}
eAe & 0 & 0 & \cdots & 0 & 0 \\
e(DA)e & eAe & 0 &  & & 0 \\
0 & \ddots & \ddots &  & & \vdots \\
\vdots &  & \ddots & \ddots & &\vdots \\
0 & & & e(DA)e & eAe &0 \\
0 & 0 & \cdots & 0 & (DA)e & A
\end{array}\right).
\]
of the $m$-replicated algebra $A^{(m)}$, where the idempotent $e^{[m]}$ is given by an $(m+1)\times(m+1)$ diagonal matrix with entries $(e,e,\ldots, e,1)$.
In particular, $A^{[m]}$ is Morita equivalent to 
\[
\left(\begin{array}{cc}(eAe)^{(m)} & 0 \\ \ [(1-e)Ae]_m & (1-e)A(1-e)\end{array}\right).
\]
\end{theorem}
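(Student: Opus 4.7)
\noindent
I will prove both statements by induction on $m$, with $m=1$ as the base case and $m\geq 2$ using the recursion $A^{[m]} = (A^{[m-1]})^{[1]}$.

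For the base case, observe that by the hypothesis $\add A \cap \add DA = \add(1-e)A$, the module $A \oplus DA$ contains $(1-e)A$ redundantly (as a summand both of $A$ and, up to isomorphism, of $DA$), so its basic form is $M_1 := eA \oplus DA$. Hence $A^{[1]}$ is Morita equivalent to $\End_A(M_1)$. Expanding $\End_A(eA \oplus DA)$ as a $2 \times 2$ block matrix via the standard identifications $\End_A(eA) = eAe$, $\Hom_A(eA, X) \cong Xe$ for any right $A$-module $X$, $\End_A(DA) \cong A$ (from basicness of $A$ and the $A$-bimodule structure on $DA$), together with the given vanishing $\Hom_A(DA, eA) = 0$, yields exactly $\begin{pmatrix} eAe & 0 \\ (DA)e & A \end{pmatrix} = e^{[1]} A^{(1)} e^{[1]}$.

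For the inductive step, by hypothesis $A^{[m-1]}$ is Morita equivalent to $\Lambda_{m-1} := e^{[m-1]} A^{(m-1)} e^{[m-1]}$, and since the SGC extension is Morita-invariant (the basic summands of $B \oplus DB$ depend only on $\add B \cup \add DB$), we obtain that $A^{[m]}$ is Morita equivalent to $\Lambda_{m-1}^{[1]}$. I now apply the base case to $\Lambda_{m-1}$, with idempotent $f := e_0^{\Lambda_{m-1}}$ (the top-left primitive block). To do so, I verify (i) that $(1-f)\Lambda_{m-1}$ coincides with the projective-injective part of $\Lambda_{m-1}$, by checking that rows $1,\ldots,m-1$ of the matrix $\Lambda_{m-1}$ give projective-injective summands while row $0$ does not; and (ii) that $\Hom_{\Lambda_{m-1}}(D\Lambda_{m-1}, f\Lambda_{m-1}) = 0$, which is verified by explicit computation using the dual matrix description of $D\Lambda_{m-1}$ (whose entries are the duals of those of $\Lambda_{m-1}$ with swapped indices). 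The base case then delivers $\Lambda_{m-1}^{[1]} \sim \begin{pmatrix} f\Lambda_{m-1} f & 0 \\ (D\Lambda_{m-1}) f & \Lambda_{m-1} \end{pmatrix}$, which I identify with $\Lambda_m$ through the block decomposition $\Lambda_m \cong \begin{pmatrix} eAe & 0 \\ Y_m & \Lambda_{m-1} \end{pmatrix}$ obtained by splitting off the $(0,0)$-entry; here $f\Lambda_{m-1} f = eAe$ directly, and $(D\Lambda_{m-1}) f \cong Y_m$ follows from the natural isomorphism $D(eAe) \cong e(DA)e$ of $eAe$-bimodules.

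For the ``In particular'' form, I apply the Peirce decomposition with $e$ and $1-e$ to the $A$-block at position $(m,m)$ of the main matrix. The key observation is that the hypothesis $\Hom_A(DA, eA) = 0$ combined with $(1-e)A \in \add DA$ forces $eA(1-e) \cong \Hom_A((1-e)A, eA) \subseteq \Hom_A(DA, eA) = 0$, and dually $(1-e)(DA)e \cong D(eA(1-e)) = 0$. These vanishings ensure the expanded matrix is lower-triangular: its top-left $(m+1) \times (m+1)$ block is $(eAe)^{(m)}$ by direct inspection, its bottom-right is $(1-e)A(1-e)$, and its bottom-left row is exactly the stalk module $[(1-e)Ae]_m$. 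The main obstacle is in the inductive step: the identification of the projective-injective part of $\Lambda_{m-1}$, the Hom-vanishing verification, and the bimodule identification $(D\Lambda_{m-1}) f \cong Y_m$ all require careful bookkeeping with the increasingly complicated triangular matrix algebra $\Lambda_{m-1}$ and its $K$-dual.
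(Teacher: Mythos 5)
Your proposal is correct and follows essentially the same route as the paper's proof: induction on $m$ with the $2\times 2$ endomorphism-ring computation of $\End_A(eA\oplus DA)$ as base case, and an inductive step that amounts to verifying for the intermediate algebra $\Lambda_{m-1}$ exactly the two hypotheses (identification of the projective-injective summands with $(1-f)\Lambda_{m-1}$ and the vanishing $\Hom_{\Lambda_{m-1}}(D\Lambda_{m-1},f\Lambda_{m-1})=0$) that the paper checks before computing $\End_{\Lambda_{m-1}}(f\Lambda_{m-1}\oplus D\Lambda_{m-1})$. Your packaging of the inductive step as a self-application of the $m=1$ case is a tidy reorganization rather than a different argument, and your treatment of the ``in particular'' clause via $eA(1-e)=0$ and $(1-e)(DA)e=0$ matches the paper's.
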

\begin{proof}
We prove the theorem by induction on $m$.
For $m=1$, by definition, $A^{[1]}$ is Morita equivalent to $\End_A(eA\oplus DA)$, which can be written as
\begin{align}
\left(\begin{array}{cc}\Hom_A(eA,eA) & \Hom_A(DA,eA) \\ \Hom_A(eA,DA) & \Hom_A(DA,DA)\end{array}\right)
& \cong \left(\begin{array}{cc}eAe & 0 \\ (DA)e & A\end{array}\right) \notag \\
& \cong \left(\begin{array}{cc}e & 0 \\ 0 & 1 \end{array}\right) \left(\begin{array}{cc}A & 0 \\ DA & A \end{array}\right) \left(\begin{array}{cc}e & 0 \\ 0 & 1 \end{array}\right) \cong e^{[1]} A^{(1)} e^{[1]}.\notag 
\end{align}

Suppose the hypothesis is true for some $m\geq 1$, i.e. $A^{[m]}$ is Morita equivalent to the basic algebra $B:=e^{[m]} A^{(m)} e^{[m]}$.
Using the matrix form of $B$, it can be seen that the projective $B$-modules $[(DA)e,A]_{m-1}$ and $[e(DA)e,eAe]_i\cong [D(eAe),eAe]_i$ for $0\leq i\leq m-2$ are also injective.
Moreover, the projective $B$-module $[(DA)e, A]_{m-1}$ is isomorphic to $[e(DA)e,eA]_{m-1}\oplus [(1-e)(DA)e,(1-e)A]_{m-1}$.
Since $(1-e)(DA)e=0$ and $(1-e)A\in \add DA$ by our assumption, we have $[(DA)e,A]_{m-1}$ being injective.
Thus, $[eAe]_0\oplus DB$ is the smallest generator-cogenerator and $A^{[m+1]}$ is Morita equivalent to $\End_{B}([eAe]_0\oplus DB)$, which can be written as
\[ \left(\begin{array}{cc}
\Hom_{B}\Big([eAe]_0,[eAe]_0\Big) & \Hom_{B}\Big(DB,[eAe]_0\Big) \\
\Hom_{B}\Big([eAe]_0,DB\Big) & \Hom_{B}\Big(DB, DB\Big)
\end{array}\right) \cong  \left(\begin{array}{cc}
\Hom_{eAe}\Big(eAe,eAe\Big) & \Hom_{B}\Big(DB,[eAe]_0\Big) \\
\Hom_{B}\Big([eAe]_0,DB\Big) & B
\end{array}\right).
\]

Note that $DB = \left(\bigoplus_{i=0}^{m-2} [e(DA)e, eAe]_i\right)\oplus [e(DA)e,eA]_{m-1}\oplus [DA]_{m}$, so all direct summands of $DB$ apart from $[e(DA)e,eAe]_0$ has $0$-th entry being zero,
which means that the upper-right entry is
\[
\Hom_{B}\Big(DB,[eAe]_0\Big) \cong \Hom_{B}\Big([e(DA)e,eAe]_0,[eAe]_0\Big)  
\]
By the description of maps between modules we gave in Section \ref{sec-Repl-Alg}, 
any map in this space is given by $(f,0):(e(DA)e,eAe)\to (eAe,0)$ with $f\circ \mathrm{id}=0$; hence, this space is always zero.

For the bottom-left entry, we have isomorphisms
\[
\Hom_{B}\Big([eAe]_0,DB\Big) \cong \Hom_{K}\Big([eAe]_0, K\Big) \cong [D(eAe)]_0^{\top} \cong [e(DA)e]_0^{\top},
\]
where $[X]_0^\top$ denotes the transpose of the row vector $[X]_0$.
Hence, $\End_{B}([eAe]_0\oplus DB)$ is isomorphic to $\left(\begin{array}{cc} eAe & 0 \\ \ [e(DA)e]_0^\top & B \end{array}\right)$.
If we write $B=e^{[m]}A^{(m)}e^{[m]}$ in its matrix form, then $e(DA)e$ in the bottom-left entry is regarded as a column vector with $e(DA)e$ in the top entry and everywhere else zero.
It is now clear that this is precisely $e^{[m+1]}A^{(m+1)}e^{[m+1]}$ as required.

For the final statement, we show the stated matrix algebra is isomorphic to $e^{[m]}A^{(m)}e^{[m]}$.
By our assumption, we have $\Hom_A((1-e)A,eA)\cong eA(1-e)=0$.
This means that we have 
\[
(DA)e = \left( \begin{array}{c} e(DA)e \\ 0 \end{array}\right) \text{ and }A\cong \left( \begin{array}{cc}eAe&0 \\ (1-e)Ae & (1-e)A(1-e)A\end{array}\right)\]
So we can replace the bottom row of $e^{[m]}A^{(m)}e^{[m]}$ by a $2\times (m+2)$-matrix 
\[
\left(\begin{array}{cccccc}
0 & \cdots & 0 & e(DA)e & eAe     & 0\\
0 & \cdots & 0 & 0      & (1-e)Ae & (1-e)A(1-e)
\end{array}.\right)
\]
Thus, we obtain a $(m+2)\times (m+2)$-matrix with the top-left $(m+1)\times(m+1)$-submatrix being $(eAe)^{(m)}$.
This completes the proof.
\end{proof}

Note that if $A$ is $d$-hereditary or $d$-canonical, then the condition of Theorem \ref{thm-SGC-replicate} is satisfied.
If $A$ is self-injective, then Theorem \ref{thm-SGC-replicate} simply says that $A^{[m]}$, which is Morita equivalent to $A$, is the ``embedded in the corner entry" of $A^{(m)}$.

The following follows immediately from Theorem \ref{thm-SGC-replicate} by taking $e=1$.
\begin{corollary}\label{cor-e=1}
Let $A$ be a $\nu$-formal algebra with $\Hom_A(DA,A)=0$.
Then $A^{(m)}$ and $A^{[m]}$ are Morita equivalent for all $m\geq 1$.
\end{corollary}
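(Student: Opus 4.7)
The plan is to deduce Corollary \ref{cor-e=1} as a direct specialization of Theorem \ref{thm-SGC-replicate} at the idempotent $e=1$. For this to work, I need to check that both hypotheses of Theorem \ref{thm-SGC-replicate} are satisfied in this setting, and that the conclusion simplifies to the claimed Morita equivalence.

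The hypothesis $\Hom_A(DA, eA) = 0$ of Theorem \ref{thm-SGC-replicate} becomes $\Hom_A(DA, A) = 0$ when $e = 1$, which is precisely our assumption. The other hypothesis, $\add A \cap \add DA = \add((1-e)A)$, becomes $\add A \cap \add DA = \add 0 = \{0\}$. To verify this, I would argue that the assumption $\Hom_A(DA, A) = 0$ already forces this: if some nonzero $M$ belonged to $\add A \cap \add DA$, then $M$ would be a summand of both $A^n$ and $(DA)^m$ for suitable $n, m$. Composing a split projection $(DA)^m \twoheadrightarrow M$ with a split inclusion $M \hookrightarrow A^n$ would produce a nonzero element of $\Hom_A((DA)^m, A^n)$, which is a direct sum of copies of $\Hom_A(DA, A)$; this contradicts the vanishing hypothesis. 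Hence $\add A \cap \add DA = 0$, as required.

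With both hypotheses verified, I would apply Theorem \ref{thm-SGC-replicate} to conclude that $A^{[m]}$ is Morita equivalent to $e^{[m]} A^{(m)} e^{[m]}$. Since $e = 1$, the idempotent $e^{[m]}$ is the $(m+1)\times(m+1)$ identity matrix, so $e^{[m]} A^{(m)} e^{[m]} = A^{(m)}$, which gives the desired Morita equivalence between $A^{[m]}$ and $A^{(m)}$ for all $m \geq 1$.

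There is no substantive obstacle: the argument is a direct unwinding. The only nontrivial observation is the implication $\Hom_A(DA, A) = 0 \Rightarrow \add A \cap \add DA = 0$, and this is an elementary consequence of the identity morphism trick described above. I would also remark that the $\nu$-formal hypothesis on $A$ is not actually used in the proof itself; it is listed in the statement because the corollary is applied in the $\nu$-formal setting elsewhere, and it ensures that both sides of the Morita equivalence have the homological features studied in Section \ref{sec-Nu-formal}.
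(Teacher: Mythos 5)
Your proposal is correct and follows exactly the paper's route: the paper derives Corollary \ref{cor-e=1} by specializing Theorem \ref{thm-SGC-replicate} to $e=1$, and your verification that $\Hom_A(DA,A)=0$ forces $\add A\cap\add DA=0$ (so that $e=1$ is an admissible choice of idempotent) is a valid and worthwhile detail that the paper leaves implicit. Your side remark that $\nu$-formality is not needed for this step is also consistent with the paper, which states the same equivalence without that hypothesis as Theorem \ref{comparison}.
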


\begin{example}\label{eg-naka-SGC} 
Consider the Nakayama algebra $A=T_{n,l+1}=K\vec\DynA_n/\rad^{l+1}(K\vec\DynA_n)$.
Then $A$ satisfies the condition of Theorem \ref{thm-SGC-replicate} with $e=e_{n-l+1}+e_{n-l+2}+\cdots +e_n$, where $e_i$ is the primitive idempotent corresponding to the vertex $i\in \vec\DynA_n$.
Then $A$ can be presented as a matrix algebra with $(i,j)$-entry $K$ for all $1\leq i\leq n$ and $i-l\leq j\leq i$; zero otherwise.
Then $eAe$ is given by the top-left $l\times l$-matrix with $(1-e)Ae$ being a $(n-l)\times l$-matrix and $(1-e)A(1-e)$ being the bottom-right $(n-l)\times (n-l)$-matrix.
Note that $eAe\cong K\vec \DynA_{l}$ and $(1-e)A(1-e)\cong K\vec \DynA_{n-l}$.
In particular, $(eAe)^{(m)}\cong T_{l(m+1),l+1}$ as we have mentioned in Example \ref{eg-typeA-Kronecker} (1).

It now follows from the theorem and the matrix presentation of $T_{l(m+1),l+1}$ that $A^{[m]}$ can be presented by a square matrix of size $l(m+1)+(n-l)=n+ml$ with $(i,j)$-entry $K$ for all $1\leq i\leq n+ml$ and $i-l\leq j\leq i$; zero otherwise.
In other words, the basic algebra of $T_{n,l+1}^{[m]}$ is $T_{n+ml,l+1}$.
\end{example}

We list two easy consequences of Corollary \ref{cor-minAG2}.
\begin{corollary}\label{cor-SGC-replicate}
The following are equivalent for a $\nu$-formal algebra $A$ satisfying $\Hom_A(DA,A)=0$.
\begin{enumerate}[(1)]
\item $A$ is periodically $\nu$-formal.
\item There exists some $m\in \Z_+$ such that $A^{[m]}$ is minimal Auslander-Gorenstein.
\item There exists infinitely many $m\in \Z_+$ such that $A^{[m]}$ is minimal Auslander-Gorenstein.
\end{enumerate}
\end{corollary}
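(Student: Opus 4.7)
The proof will be essentially a direct combination of two results established earlier in the paper. The plan is to observe that Corollary \ref{cor-e=1} gives, for any $\nu$-formal algebra $A$ with $\Hom_A(DA,A)=0$, a Morita equivalence between $A^{[m]}$ and $A^{(m)}$ for every $m\geq 1$. Since both the self-injective dimension and the dominant dimension of an algebra are preserved under Morita equivalence, the property of being minimal Auslander-Gorenstein transports between $A^{[m]}$ and $A^{(m)}$. Thus, conditions (2) and (3) of the present corollary are equivalent to the corresponding conditions stated for $A^{(m)}$ in Corollary \ref{cor-minAG2}.

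Once this translation is made, one simply applies Corollary \ref{cor-minAG2} to conclude: periodic $\nu$-formality of $A$ is equivalent to the existence of some (equivalently, infinitely many) $m\in\Z_+$ with $A^{(m)}$ being minimal Auslander-Gorenstein, which by the Morita equivalence is equivalent to the same statement for $A^{[m]}$. This gives the three-way equivalence.

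There is essentially no obstacle here beyond the verification that Morita equivalence preserves the minimal Auslander-Gorenstein property, which is standard: for a Morita equivalence $\mod\Lambda\simeq\mod\Gamma$, injective objects correspond to injective objects, projective objects to projective objects, and minimal injective coresolutions of $\Lambda_\Lambda$ correspond to those of $\Gamma_\Gamma$, so $\idim\Lambda=\idim\Gamma$ and $\domdim\Lambda=\domdim\Gamma$. The only thing to remark is that the statement is for $m\in\Z_+$, but Corollary \ref{cor-minAG2} already provides the explicit values $m=th_A-1$ (for $t>0$) at which the minimal Auslander-Gorenstein property holds, so there is no issue about excluding $m=0$. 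In fact, one can additionally record, using Corollary \ref{cor-minAG2}, the explicit formula
\[
\idim A^{[m]} \;=\; t(h_A-s_A)-1 \;=\; \domdim A^{[m]}
\]
at $m=th_A-1$, where $(h_A,-s_A)$ denotes the twisted Calabi-Yau dimension of $A$; this is the analogue for SGC extensions of the formula given for replicated algebras.
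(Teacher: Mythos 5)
Your proposal is correct and matches the paper's intended argument exactly: the paper presents this corollary as an immediate consequence of Corollary \ref{cor-e=1} (the Morita equivalence $A^{[m]}\simeq A^{(m)}$ under $\Hom_A(DA,A)=0$) combined with Corollary \ref{cor-minAG2}, which is precisely your route. Your remark that Morita equivalence preserves $\idim$ and $\domdim$, and the transported dimension formula, are both accurate.
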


In \cite{Iya1} (resp. \cite{IS}), it is shown that any $n$-Auslander algebra (resp. $n$-Auslander-Gorenstein algebra) $\Gamma$ is isomorphic to the endomorphism ring of a so-called \emph{$n$-cluster-tilting} (resp. \emph{$n$-precluster-tilting}) $\Lambda$-module $M$.
Moreover, the pair $(\Lambda,M)$ is unique up to Morita equivalence of $\Lambda$ and a choice of generator of $\add M$.
We call $(\Lambda,M)$ the \emph{higher (pre)cluster-tilting pair corresponding to $A$}.

\begin{corollary}\label{cor-SGC-higherAus}
Suppose $A$ is a periodically $\nu$-formal algebra with $\Hom_A(DA,A)=0$ and has finite (resp. infinite) global dimension.
If the $(m+1)$-st replicated algebra $A^{(m+1)}$ is a higher Auslander (resp. minimal Auslander-Gorenstein) algebra, then the  corresponding higher (pre)cluster-tilting pair is $(A^{[m]}, A^{[m]}\oplus DA^{[m]})$.
\end{corollary}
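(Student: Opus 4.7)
The plan is to realise $A^{(m+1)}$ explicitly as an endomorphism algebra of a generator-cogenerator over $A^{[m]}$, and then to invoke the uniqueness of the higher (pre)cluster-tilting pair from \cite{Iya1, IS}.

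Since $\Hom_A(DA, A) = 0$, Corollary \ref{cor-e=1} supplies a Morita equivalence $A^{(m+1)} \sim A^{[m+1]}$; combined with the very definition of the SGC construction, $A^{[m+1]} = \End_{A^{[m]}}(A^{[m]} \oplus DA^{[m]})$, this presents $A^{(m+1)}$ as the endomorphism algebra over $A^{[m]}$ of the module $M := A^{[m]} \oplus DA^{[m]}$, which is visibly a generator-cogenerator of $\mod A^{[m]}$.

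Under our assumption, $A^{(m+1)}$ is higher Auslander (resp. minimal Auslander-Gorenstein), so in particular has dominant dimension at least $2$; hence by the Morita-Tachikawa correspondence the pair of base algebra and basic generator-cogenerator realising $A^{(m+1)}$ as endomorphism ring is unique up to Morita equivalence of the base and a choice of basic module in $\add M$. We have exhibited one such pair explicitly, namely $(A^{[m]}, M)$. The theorems of \cite{Iya1, IS} refine the Morita-Tachikawa correspondence by asserting that when the endomorphism algebra has the higher Auslander (resp. minimal Auslander-Gorenstein) property, the corresponding generator-cogenerator is automatically higher cluster-tilting (resp. higher precluster-tilting). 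Combining these two statements identifies $M$ as the desired higher (pre)cluster-tilting module and $(A^{[m]}, M)$ as the associated pair; the explicit numerical level can then be read off from $\domdim A^{(m+1)}$, which has been computed in Corollary \ref{cor-minAG2}.

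The main step meriting some care is the application of the uniqueness statements of \cite{Iya1, IS}, typically formulated for basic pairs, to our pair $(A^{[m]}, M)$, which is not a priori basic. This is resolved by passing to a basic model of $(A^{[m]}, M)$ before invoking uniqueness, a step permitted by the Morita equivalence tolerance built into those statements.
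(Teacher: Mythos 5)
Your proposal is correct and follows essentially the same route as the paper: apply Corollary \ref{cor-e=1} to identify $A^{(m+1)}$ up to Morita equivalence with $A^{[m+1]}=\End_{A^{[m]}}(A^{[m]}\oplus DA^{[m]})$, then invoke the higher Auslander correspondence of \cite{Iya1} (resp. the Iyama--Solberg correspondence of \cite{IS}) together with its uniqueness statement. The extra care you take about passing to a basic pair is a reasonable elaboration of what the paper leaves implicit.
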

\begin{proof}
Under the assumption of the claim, $A^{(m+1)}$ is Morita equivalent to $A^{[m+1]}$ by Corollary \ref{cor-e=1}; hence, the later is higher Auslander (resp. minimal Auslander-Gorenstein).
The claim now follows from the higher Auslander correspondence (resp. Iyama-Solberg correspondence).
\end{proof}

When the condition $\Hom_A(DA,A)=0$ is replaced by the one in Theorem \ref{thm-SGC-replicate}, it is natural to ask whether we can compare the homological dimensions of the series $A^{[m]}$ and that of the series $A^{(m)}$; in particular, whether Corollary \ref{cor-SGC-replicate} still holds.
While a general method is desirable, we do not have much success.
We can, however, rescue the result in the case of hereditary algebras as follows.

\subsection{SGC extensions of certain Nakayama algebras}\label{subsec-SGC-typeA}
Theorem \ref{thm-SGC-replicate} implies that the SGC extensions of almost all hereditary algebras coincide with their replicated algebras up to Morita equivalence.
The only family of exceptions is given by hereditary Nakayama algebra (i.e. whose valued quiver is $\vec\DynA_n$).

In this subsection, we determine the formulae for the dominant and global dimensions of the SGC extensions of the Nakayama algebras with $n$ simple modules and Kupisch series $[l,l,\ldots, l,l-1,\ldots, 2, 1]$.
In particular, this gives a classification of higher Auslander algebras taking such a form.
Since the computation of the homological dimensions for Nakayama algebras depends only on the Kupisch series, we can assume that the algebras are given by $T_{n,l}:=K\vec \DynA_n/\rad^l(K\vec \DynA_n)$.

We will also give some ring theoretic observations about these SGC extensions in comparison to Proposition \ref{prop-QF}.

\medskip

For convenience, we will use $T_{n,l}^{[m]}$ to denote the basic algebra of the $m$-th SGC extension throughout, instead of the original definition.
Since $T_{n,l}^{[m]}\cong T_{n+m(l-1),l}$ as we have shown in Example \ref{eg-naka-SGC}, we can just calculate $\domdim T_{n,l}$ and $\idim T_{n,l}$ for arbitrary $n,l$ to achieve our goal.
Note that the case of $l=2$ is already done in \cite[Example 2.4(a)]{Iya2}, but our strategy below works for the case $l=2$, too.

It turns out that there are recursive formulae determining these dimensions.
It is more convenient for us to work over an infinite dimensional algebra $A$, where $T_{n,l}$ appears as a quotient, as well as an idempotent truncation, of $A$, for all $n\geq 1$.

Let $Q$ be the linear oriented quiver of type $\DynA_\infty$, i.e.
\[
Q: \qquad 1 \to 2 \to 3 \to \cdots 
\]
The infinite dimensional algebra $A$ is defined as $KQ/\rad^l(KQ)$.
As usual, we denote by $e_i$ the primitive idempotent given by the trivial path at the vertex $i$, and $P_i, I_i, S_i$ the corresponding indecomposable projective, indecomposable injective, simple $A$-modules respectively.
The finite dimensional indecomposable $A$-modules are of the form $M_{i,s}:=e_iA/e_i\rad^s(A)$ for some $i \geq 1$ and $s$ with $1 \leq s \leq l$.
Note that every $P_i$ is injective and isomorphic to $I_{i+l-1}$, but $I_i$ is projective only for $i\geq l$.
It is easy to see that the algebra $T_{n,l}$ is isomorphic to $eAe\cong A/A(1-e)A$ where $e=e_1+e_2+\cdots +e_n$; roughly speaking, $T_{n,l}$ is obtained from $A$ by deleting the vertices $i\in Q_0$ for all $i > n$.

Define $d_{i,s}:=\domdim_A(M_{i,s})$ and $g_{i,s}:=\idim_A(M_{i,s})$ for all $i\geq 1$ and $1 \leq s \leq l$.
For convenience, we set $d_{i,s}=0=g_{i,s}$ for any pair of integers $(i,s)$ with $i \leq 0$.

\begin{lemma}\label{lem-recursion}
Fix some $i\geq 1$.
If $1\leq s\leq l-i$, then we have 
\[
d_{i,s}=0 \quad \text{ and }\quad g_{i,s}=\begin{cases}
0, & \text{if }i=1;\\ 1, & \text{if }i>1.
\end{cases}
\]
Otherwise, we have the following recursions
\[
d_{i,s}=1+d_{i+s-l,l-s} \quad \text{ and }\quad g_{i,s}=1+g_{i+s-l,l-s}.
\]
In particular, every indecomposable non-injective $A$-module $M$ satisfies $\idim_A M-\domdim_A M\leq 1$.
\end{lemma}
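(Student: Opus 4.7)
The plan is to establish the stated values and recursions by directly analyzing the minimal injective coresolution of $M_{i,s}$, exploiting the fact that every indecomposable $A$-module is uniserial so all computations reduce to tracking composition factors. The final inequality then follows by induction on $i$.

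The key observations are: $M_{i,s}$ has socle $S_{i+s-1}$, so its injective envelope is $I_{i+s-1}$; and $I_j$ has length $\min(j,l)$, with $I_j \cong P_{j-l+1}$ projective-injective when $j \geq l$ and $I_j \cong M_{1,j}$ otherwise. In the base case $1 \leq s \leq l - i$, we have $i+s-1 < l$, so $I_{i+s-1}$ is not projective, immediately giving $d_{i,s} = 0$. Identifying the cokernel of $M_{i,s} \hookrightarrow I_{i+s-1}$ by its composition factors shows it equals $M_{1, i-1}$ (which is zero when $i=1$ and equals the injective module $I_{i-1}$ when $i>1$), from which the stated values of $g_{i,s}$ follow.

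In the recursive case $s > l - i$, the injective envelope $I_{i+s-1} = P_{i+s-l}$ is projective-injective, and the cokernel of $M_{i,s} \hookrightarrow I_{i+s-1}$ is identified by its top, socle and length as $M_{i+s-l,\, l-s}$. Splicing the resulting short exact sequence $0 \to M_{i,s} \to P_{i+s-l} \to M_{i+s-l,\, l-s} \to 0$ with the minimal injective coresolution of the cokernel produces the minimal injective coresolution of $M_{i,s}$, and the recursions $d_{i,s} = 1 + d_{i+s-l,\, l-s}$ and $g_{i,s} = 1 + g_{i+s-l,\, l-s}$ read off directly.

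For the final inequality I induct on $i$. The case $i=1$ is vacuous since $M_{1,s}$ is always injective. For $i \geq 2$, the base case $s \leq l-i$ gives $g_{i,s} - d_{i,s} = 1$; in the recursive case the two recursions collapse to $g_{i,s} - d_{i,s} = g_{i+s-l,\, l-s} - d_{i+s-l,\, l-s}$, and since $s < l$ (as $M_{i,s}$ is non-injective) we have $i+s-l < i$, so either $M_{i+s-l,\, l-s}$ is non-injective and induction applies, or $i+s-l = 1$, in which case $M_{1, l-s}$ is injective but not projective so both homological dimensions vanish. The only mild subtlety is ensuring the splicing does not artificially inflate the resolution when the cokernel happens to be injective; this amounts to the observation that the splice is automatically minimal because the injective envelope $P_{i+s-l}$ admits no direct summand in common with the injective envelope of the cokernel.
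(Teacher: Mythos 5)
Your proof is correct and follows essentially the same route as the paper: identify the injective envelope of $M_{i,s}$ via its socle, observe it is projective precisely when $i+s-1\geq l$, compute the cosyzygy explicitly, and read off the recursions, with the final inequality obtained from the base case together with the recursion. (Your closing remark about shared summands is unnecessary --- minimality of the spliced coresolution is automatic once $I^0$ is the injective envelope of $M_{i,s}$ and the remaining terms form the minimal coresolution of the cokernel --- but it does no harm.)
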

\begin{proof}
Recall that $M_{1,s}\cong I_s$ for $1\leq s\leq l-1$ are all the indecomposable injective non-projective $A$-modules, which means that its dominant and injective dimensions (i.e. $d_{1,s}$ and $g_{1,s}$) are both zero.
Since $I_i$ is projective is equivalent to $i\geq l$, the first term in the injective coresolution $M_{i,s}$ is projective if, and only if, $i+s-1\geq l$.

Suppose we have $s\leq l-i$.
We have already explained the case when $i=1$; for the case of $i>1$, we have $\Omega^{-1}(M_{i,s}) \cong I_{i-1}$, which yields $g_{i,s}=1$ and $d_{i,s}=0$.

In the case of $s>l-i$, we get that the injective envelop of $M_{i,s}$ is projective, and the cosyzygy $\Omega^{-1}(M_{i,s})$ is then isomorphic to $M_{i+s-l,l-s}$.
The two recursion formulae now follow.

For the last statement, observe that it holds for $M_{i,s}$ with $s<l-i$, then apply the recursions.
\end{proof}

We have the following formulae; note that $\gldim T_{n,l}$ is also calculated in \cite{Vas}.
\begin{proposition} \label{prop-naka-1}
Let $n\geq l$ be a positive integers and write $n=lt+r$ for some $0\leq  r< l$ and $t\geq 1$.
Then we have
\begin{align}
\gldim T_{n,l}=\idim  T_{n,l} = \begin{cases}
2t-1 & \text{if }r=0;\\
2t   & \text{if }r=1;\\
2t+1 & \text{otherwise},
\end{cases} \quad \text{and}\quad 
\domdim  T_{n,l} = \begin{cases}
2t-1 & \text{if }r<l-1;\\
2t & \text{if }r=l-1.
\end{cases}\notag 
\end{align}
In particular, $T_{n,l}$ is a higher Auslander algebra if, and only if, $l=2$ or $l$ divides $n$.
Moreover, in such a case the higher Auslander corresponding pair is $(T_{n-l+1,l},T_{n-l+1,l}\oplus D(T_{n-l+1,l}))$.
\end{proposition}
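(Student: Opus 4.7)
The plan is to compute the dominant and injective dimensions of $T_{n,l}$ by translating the computation to the infinite-dimensional algebra $A = KQ/\rad^l(KQ)$ and applying the recursion from Lemma \ref{lem-recursion}. Writing $T_{n,l} = eAe$ with $e = e_1 + \cdots + e_n$, every indecomposable $T_{n,l}$-module is some $M_{i,s}$ with $i + s - 1 \leq n$. For such a module the minimal injective coresolution in $A$ only involves $I_j^A$ with $j \leq i + s - 1 \leq n$, and for such $j$ we have $I_j^A \cong I_j^{T_{n,l}}$, with $I_j^A$ projective in $A$ (iff $j \geq l$) iff $I_j^{T_{n,l}}$ is projective in $T_{n,l}$. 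Hence $\domdim_{T_{n,l}}(M_{i,s}) = d_{i,s}$ and $\idim_{T_{n,l}}(M_{i,s}) = g_{i,s}$. Since the projective-injective indecomposable summands of $T_{n,l}$ (those $P_i$ with $i \leq n-l+1$) have infinite dominant dimension and zero injective dimension, $\domdim T_{n,l}$ and $\idim T_{n,l}$ reduce to the minimum and maximum of $d_{i, n-i+1}$ and $g_{i, n-i+1}$ respectively, over the short projectives $i = n-l+2, \ldots, n$.

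Next, for each such $i$ I would iterate the recursion starting from $(i_0, s_0) = (i,\, n-i+1)$. A short induction yields the explicit trajectory $(i_{2k}, s_{2k}) = (i - kl,\, s_0)$ and $(i_{2k+1}, s_{2k+1}) = (n + 1 - (k+1)l,\, l - s_0)$, so the stopping condition $i_k + s_k \leq l$ is reached either at the odd step $2u + 1$ with $u = \lfloor(i-1)/l\rfloor$ (when $u < t$), or at the even step $2t$ (where $n = lt + r$). Tracking the number of recursion steps together with the base-case value at the stopping position—which contributes $0$ for $d$ and either $0$ or $1$ for $g$ according to whether the first coordinate at stopping equals $1$—a case analysis on $r \in \{0,\, 1,\, 2,\ldots,l-2,\, l-1\}$ directly yields the claimed formulae for $\domdim T_{n,l}$ and $\idim T_{n,l} = \gldim T_{n,l}$ (the latter equality because $T_{n,l}$ is triangular). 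Comparing the two formulae, the equality $\domdim = \gldim$ holds precisely when $r = 0$ for $l \geq 3$, or for any $r$ when $l = 2$; this is equivalent to $l = 2$ or $l \mid n$.

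For the identification of the corresponding higher Auslander pair: Example \ref{eg-naka-SGC} with $m = 1$ shows that $T_{n,l}$ is Morita equivalent to $T_{n-l+1, l}^{[1]} = \End_{T_{n-l+1,l}}(T_{n-l+1,l} \oplus D T_{n-l+1, l})$, so when $T_{n,l}$ is higher Auslander the Auslander correspondence pins down the corresponding pair as $(T_{n-l+1,l},\, T_{n-l+1,l} \oplus D T_{n-l+1, l})$. The main technical obstacle is the bookkeeping in the trajectory analysis—in particular, isolating the boundary cases $r = 0$, $r = 1$, and $r = l-1$ from the generic middle range—but this becomes mechanical once the alternating even/odd structure of the recursion is made explicit.
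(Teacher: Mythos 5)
Your proposal is correct and follows essentially the same route as the paper: both pass to the infinite-dimensional algebra $A=KQ/\rad^l(KQ)$, identify the indecomposable projective non-injective $T_{n,l}$-modules with the $M_{i,n-i+1}$ for $n-l+2\le i\le n$ (noting that dominant and injective dimensions are unchanged under this identification), and then unwind the recursion of Lemma \ref{lem-recursion}, finishing the higher Auslander pair via Example \ref{eg-naka-SGC} and the Auslander correspondence. The only cosmetic difference is that the paper first uses the monotonicity $g_{n-i,i+1}\le g_{n-j,j+1}$ (and likewise for $d$) to reduce to the two extremal computations $g_{n,1}$ and $d_{n-l+2,l-1}$, whereas you compute the explicit trajectory for every starting index and take the minimum/maximum at the end; your trajectory formula and stopping analysis check out.
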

\begin{proof}
By construction, the indecomposable projective non-injective $T_{n,l}$-modules are given by $e_i T_{n,l}$ with $n-l+1< i\leq n$, which is isomorphic to $M_{i,n-l+1-i}$ as $A$-module, which means that
\begin{align}
\idim(T_{n,l})_{T_{n,l}} & = \max \{g_{n-i,i+1} \mid 0\leq i<l-1 \},\notag \\
\text{ and } \quad \domdim(T_{n,l})_{T_{n,l}} &=  \min \{d_{n-i,i+1} \mid 0\leq i< l-1\}. \notag 
\end{align}

It follows from Lemma \ref{lem-recursion} that $g_{n-i,i+1} \leq g_{n-j,j+1}$ and $d_{n-i,i+1}\leq d_{n-j,j+1}$ whenever $i \leq j$ for $i,j<l-1$.
In particular, it follows from the discussion above that we have $\domdim T_{n,k}=d_{n-l+2,l-1}$ and $\idim T_{n,k}=g_{n,1}$.

By the recursion in Lemma \ref{lem-recursion}, we get that
\[
g_{n,1} = g_{lt+r,1} = 1+g_{l(t-1)+r+1,l-1} = 2+g_{l(t-1)+r,1} = \cdots = 2(t-1)+1+g_{r+1,l-1}.
\]
If $r=0$, then we get that $g_{r+1,l-1}=g_{1,l-1}=0$ and $\idim T_{n,l} = g_{n,1} = 2(t-1)+1=2t-1$.
If $r\neq 0$, we get that 
\[
\idim T_{n,l} = 2(t-1)+2+g_{r,1} = \begin{cases} 2t, &\text{if }r=1 \\ 2t+1, &\text{if }r>1.
\end{cases}
\]

Similarly, for the dominant dimension, we get that 
\begin{align}
\domdim T_{n,l}&= d_{n-l+2,l-1} = d_{l(t-1)+r+2,l-1} = 1+d_{l(t-1)+r+1,1} = 2+d_{l(t-2)+r+2,l-1} = \cdots \notag \\
&= 2(t-1)+1+d_{r+1,1} = \begin{cases}
2t-1, & \text{if }r<l-1;\\
2t & \text{if }r=l-1.
\end{cases}\notag 
\end{align}

The criteria of $T_{n,l}$ being higher Auslander follows immediately from these formulae.
Moreover, in such a case, it follows from Example \ref{eg-naka-SGC} (which says that $T_{n-l+1,l}^{[1]}\cong T_{n,l}$) that the higher Auslander corresponding pair is precisely as stated in our claim.
\end{proof}
Note that Proposition \ref{prop-naka-1} says that $T_{n,l}$ is a higher hereditary algebra when $l$ divides $n-1$ or $l=2$.
Hence, combining Proposition \ref{prop-naka-1} with Lemmas \ref{lem-non-nf-naka1}, \ref{lem-non-nf-naka2}, \ref{lem-non-nf-naka3} gives an alternative proof for Vaso's result (Theorem \ref{thm-vaso}).

\begin{theorem} \label{thm-naka-SGC}
Let $m$ be a positive integer.
\begin{enumerate}[(1)]
\item $T_{n,l}^{[m]}$ is a higher Auslander algebra if, and only if, $l=2$ or $l$ divides $|n-m|$.
\item $\idim M - \domdim M\leq 1$ for all $M\in \mod T_{n,l}^{[m]}$.
\item $T_{n,l}^{[m]}$ is a QF-13 algebra.
\end{enumerate}
\end{theorem}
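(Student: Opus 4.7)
The plan rests on the identification $T_{n,l}^{[m]}\cong T_{N,l}$ with $N := n + m(l-1)$ established in Example \ref{eg-naka-SGC}; since $m\geq 1$ and $n,l\geq 2$, we have $N\geq l$. All three claims then become statements about the Nakayama algebra $T_{N,l}$. Part (1) is immediate from Proposition \ref{prop-naka-1}, which shows that $T_{N,l}$ is higher Auslander precisely when $l=2$ or $l$ divides $N$: since $N=n+m(l-1)\equiv n-m\pmod{l}$, the divisibility $l\mid N$ is equivalent to $l\mid |n-m|$.

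For part (2), the strategy is to reuse the infinite-dimensional auxiliary algebra $A=KQ/\rad^l(KQ)$ (with $Q$ the linearly oriented $\DynA_\infty$-quiver) introduced in the proof of Proposition \ref{prop-naka-1}, together with the recursion in Lemma \ref{lem-recursion}, which yields $\idim_A M - \domdim_A M \leq 1$ for every indecomposable non-injective $A$-module $M$. Writing $T_{N,l}\cong A/A(1-e)A$ with $e = e_1+\cdots+e_N$, every indecomposable $T_{N,l}$-module is an $M_{i,s}$ with $i+s-1\leq N$. The key technical check is that the minimal injective coresolution in $\mod A$ of such an $M_{i,s}$ stays entirely inside $\mod T_{N,l}$: the injective envelope $I_{i+s-1}^A$ coincides with the $T_{N,l}$-injective with socle $S_{i+s-1}$ (both have length $\min(i+s-1,l)$), its projectivity is governed by the same inequality $i+s-1\geq l$ in both module categories, and the cosyzygies $\Omega^{-1}M_{i,s}\cong M_{i+s-l,\,l-s}$ (or $I_{i-1}$ in the remaining branch) inherit the constraint $i+s-1\leq N$ by induction. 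Consequently $\idim_{T_{N,l}}M = \idim_A M$ and $\domdim_{T_{N,l}}M = \domdim_A M$ for every indecomposable $T_{N,l}$-module, and the bound transfers directly from Lemma \ref{lem-recursion}, with injective indecomposables satisfying it trivially.

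For part (3), I would establish QF-3 and QF-1 separately. For QF-3, it is enough to notice that each indecomposable projective $e_i T_{N,l}$ has socle $S_{\min(i+l-1,N)}$ and hence injective envelope $I_{\min(i+l-1,N)}^{T_{N,l}}$, which is projective-injective since $l\leq\min(i+l-1,N)\leq N$; thus $\domdim T_{N,l}\geq 1$. For QF-1 I would invoke Fuller's classical theorem that every Nakayama algebra is QF-1, which applies directly to $T_{N,l}$. The main obstacle is precisely this QF-1 step: although Proposition \ref{prop-yamagata-QF13} offers a natural criterion once $\domdim\geq 2$, here $\domdim T_{N,l}$ can equal $1$ (for instance when $m=1$ and $n\leq l-1$), and individual indecomposable modules $M_{i,s}$ with $i+s\leq l$ and $i\geq N-l+2$ can have both $\domdim M$ and $\codomdim M$ equal to $0$, so Yamagata's criterion is not uniformly available and one must appeal to Fuller's theorem (or an ad hoc uniseriality argument) to secure QF-1 throughout the parameter range.
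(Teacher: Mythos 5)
Your parts (1) and (2) follow the paper's route: the paper likewise reduces everything to $T_{N,l}$ with $N=n+m(l-1)$ and quotes Proposition \ref{prop-naka-1} for (1) (handling $l=2$ by citing \cite{Iya2}), and declares (2) ``immediate from Lemma \ref{lem-recursion}''; your explicit check that the minimal injective coresolution of $M_{i,s}$ computed over the infinite-dimensional algebra $A$ stays inside $\mod T_{N,l}$ is a welcome expansion of a step the paper leaves implicit. Part (3) is where you diverge, and your stated reason for the detour is mistaken. Under the paper's standing convention $l\le n$ one has $N=n+m(l-1)\ge 2l-1$, which forces $\domdim T_{N,l}\ge 2$ by Proposition \ref{prop-naka-1} (if $t=1$ then necessarily $r=l-1$), and it also makes your proposed bad modules vacuous: $d_{i,s}=0$ forces $i\le l-s\le l-1$ by Lemma \ref{lem-recursion}, whereas $\codomdim M_{i,s}=0$ would require $i\ge N-l+2\ge l+1$, so no indecomposable has both dimensions equal to zero. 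This is exactly how the paper proves (3): it applies Proposition \ref{prop-yamagata-QF13} after observing that whenever $d_{i,s}=0$ the projective cover $P_i$ (with $i\le l-1\le N-l+1$) is injective, hence $\codomdim M_{i,s}\ge 1$. Your fallback --- that Nakayama algebras are QF-1 by the classical theorem that artinian serial rings are balanced --- is a legitimate external shortcut, but it is not needed here, and the claim that Yamagata's criterion is ``not uniformly available'' should be removed.
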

\begin{proof}
(1): For $l>2$, apply Proposition \ref{prop-naka-1} to $T_{n,l}^{[m]}\cong T_{n+m(l-1),l}$.

For $l=2$, since $T_{n,l}^{[m]}\cong T_{n+m,2}$, the result is already shown in \cite[Example 2.4(a)]{Iya2}.

(2): Immediate from Lemma \ref{lem-recursion}.

(3): 
By Proposition \ref{prop-yamagata-QF13}, we need to prove that every indecomposable $T_{n+m(l-1),l}$-module $M_{i,s}$ has $\domdim M_{i,s}\geq 1$ or $\codomdim M_{i,s}\geq 1$.
Note that the dominant dimension $d_{i,s}$ of $M_{i,s}$ is zero precisely when $i<l$ and $1\leq s\leq l-i$ (cf. Lemma \ref{lem-recursion}).
Clearly the projective cover (both as $A$- and $T_{n+m(l-1),l}$-module) $P_i$ of such $M_{i,s}$ is injective, which means that $\codomdim M_{i,s}\geq 1$ as required.
\end{proof}

Now, combining Corollary \ref{cor-SGC-replicate} with Theorem \ref{thm-naka-SGC}, we have the following relation between SGC extensions and representation-finiteness of hereditary algebras.
\begin{corollary}\label{cor-Yam-ext-high-Aus}
The following are equivalent for a hereditary algebra $A$.
\begin{enumerate}[(1)]
\item $A$ is representation-finite.
\item There exists some $m\in \Z_+$ such that $A^{[m]}$ is a higher Auslander algebra.
\item There exists infinitely many $m\in \Z_+$ such that $A^{[m]}$ is a higher Auslander algebra.
\end{enumerate}
\end{corollary}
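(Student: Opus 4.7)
The strategy is to reduce the claim to results already available for replicated algebras (Corollary \ref{cor-summary-hered}) and for Nakayama SGC extensions (Theorem \ref{thm-naka-SGC}), splitting into cases according to whether $\Hom_A(DA,A)$ vanishes. The implication (3)$\Rightarrow$(2) is trivial, so I focus on (2)$\Rightarrow$(1) and (1)$\Rightarrow$(3).

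A preliminary observation I would record first: for a basic ring-indecomposable hereditary algebra $A=KQ$, we have $\Hom_A(DA,A)=0$ if and only if $Q$ is not the linearly oriented $\vec{\DynA}_n$. Indeed, over a hereditary algebra the image of any nonzero homomorphism $I\to P$ from an indecomposable injective to an indecomposable projective is simultaneously a quotient of $I$ (hence injective) and a submodule of $P$ (hence projective). Thus $\Hom_A(DA,A)\neq 0$ forces the existence of a projective-injective indecomposable, which by a dimension-vector comparison is possible over a connected acyclic $Q$ only when $Q$ is linearly oriented $\vec{\DynA}_n$ (in which case $P_1\cong I_n$).

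\emph{Non-Nakayama case.} If $Q$ is not the linear orientation of $\vec{\DynA}_n$, then $\Hom_A(DA,A)=0$, so Corollary \ref{cor-e=1} gives a Morita equivalence between $A^{[m]}$ and $A^{(m)}$ for every $m\geq 1$. Because being higher Auslander is a Morita-invariant property, the equivalences (1)$\Leftrightarrow$(2)$\Leftrightarrow$(3) follow immediately from Corollary \ref{cor-summary-hered}.

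\emph{Nakayama case.} If $A\cong K\vec{\DynA}_n$, then $A$ is of Dynkin type $\DynA_n$ and is therefore representation-finite, so (1) holds unconditionally and only (3) needs to be checked. Writing $A=T_{n,n}$ and invoking Theorem \ref{thm-naka-SGC}(1), the algebra $A^{[m]}=T_{n,n}^{[m]}$ is higher Auslander precisely when $n=2$ or $n$ divides $|n-m|$; the latter simplifies to $n\mid m$, so infinitely many $m$ satisfy the condition in either subcase, establishing (3) and thereby also (2). Conversely, (2)$\Rightarrow$(1) is automatic here since $A$ is already representation-finite.

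The main obstacle is precisely the Nakayama case: because $\Hom_A(DA,A)\neq 0$ for $A=K\vec{\DynA}_n$, the Morita equivalence of Corollary \ref{cor-e=1} is unavailable, and Corollary \ref{cor-SGC-replicate} only addresses the weaker minimal Auslander-Gorenstein property. This gap is exactly what the direct combinatorial computation of Section \ref{subsec-SGC-typeA} via Kupisch series was designed to fill, and Theorem \ref{thm-naka-SGC} supplies what is needed to finish the argument in that case.
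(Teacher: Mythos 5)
Your proof is correct and follows essentially the same route as the paper: the paper's one-line argument combines Corollary \ref{cor-SGC-replicate} (which rests on the same Morita equivalence $A^{[m]}\simeq A^{(m)}$ you invoke via Corollary \ref{cor-e=1}, together with the replicated-algebra results) with Theorem \ref{thm-naka-SGC} for the linearly oriented $\vec{\DynA}_n$ case, exactly matching your case split. Your substitution of Corollary \ref{cor-summary-hered} for Corollary \ref{cor-SGC-replicate} in the generic case is only a cosmetic difference, and your preliminary observation makes explicit the dichotomy the paper asserts at the start of Subsection \ref{subsec-SGC-typeA}.
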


\subsection{Further remarks}\label{subsec-SGC-remarks}

Recall from Proposition \ref{prop-repli-tensor-SI} that taking the $m$-replicated algebra is an operation which ``commutes" with taking the tensor product algebra with a self-injective algebra.
The analogous statement is also true for $m$-th SGC extensions.

\begin{proposition}\label{prop-tensor2}
Let $A$ be an algebra with $m$-th SGC extension $A^{[m]}$ and $B$ a basic and self-injective algebra.
The $m$-th SGC extension of $A \otimes_K B$ is Morita equivalent to $A^{[m]} \otimes_K B$.
\end{proposition}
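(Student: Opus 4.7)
My plan is to proceed by induction on $m$, in direct analogy with the proof of Proposition \ref{prop-repli-tensor-SI}; the case $m=0$ is trivial. For the inductive step, I will isolate two general facts. \emph{(a) Base case:} for any finite-dimensional algebra $C$ and any basic self-injective algebra $B$, there is an algebra isomorphism $(C\otimes_K B)^{[1]}\cong C^{[1]}\otimes_K B$. \emph{(b) Morita invariance:} the operation $C\mapsto C^{[1]}$ takes Morita-equivalent algebras to Morita-equivalent algebras. Granting (a) and (b), the induction step is immediate: the induction hypothesis gives that $(A\otimes B)^{[m]}$ is Morita equivalent to $A^{[m]}\otimes B$, so (b) yields that $(A\otimes B)^{[m+1]}=((A\otimes B)^{[m]})^{[1]}$ is Morita equivalent to $(A^{[m]}\otimes B)^{[1]}$, and (a) applied with $C=A^{[m]}$ identifies the latter with $A^{[m+1]}\otimes B$.

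The key input for (a) is that every basic self-injective algebra $B$ is Frobenius, so there is an isomorphism $B\cong DB$ of right $B$-modules. Combined with the natural isomorphism $D(C\otimes B)\cong DC\otimes DB$, this gives a right $(C\otimes B)$-module isomorphism
\[
(C\otimes B)\oplus D(C\otimes B)\;\cong\;(C\oplus DC)\otimes B.
\]
Taking $\End_{C\otimes B}$ and applying the standard identification $\End_{C\otimes B}(M\otimes N)\cong\End_C(M)\otimes_K\End_B(N)$ (valid for finite-dimensional modules over finite-dimensional algebras) then produces $C^{[1]}\otimes_K\End_B(B)\cong C^{[1]}\otimes B$, which is (a).

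For (b), any Morita equivalence $\mod C\simeq\mod C'$ preserves projectives and injectives, so it sends $C\oplus DC$ to some $C'$-module $N$ with $\add N=\add(C'\oplus DC')$. Hence $C^{[1]}=\End_C(C\oplus DC)\cong\End_{C'}(N)$, and $\End_{C'}(N)$ is Morita equivalent to $\End_{C'}(C'\oplus DC')=(C')^{[1]}$, as required.

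The main technical point to verify, as in Proposition \ref{prop-repli-tensor-SI}, is the bookkeeping of the twisted $B$-actions coming from the Frobenius isomorphism $DB\cong{}_1B_\phi$ for an appropriate Nakayama automorphism $\phi$: one must check that the isomorphism displayed above is really of right $(C\otimes B)$-modules, not merely of $K$-vector spaces. I expect this to follow from the same automorphism trick used in the $m=1$ step of the proof of Proposition \ref{prop-repli-tensor-SI}.
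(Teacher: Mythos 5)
Your proof is correct and follows essentially the same route as the paper: the paper's entire argument is your step \emph{(a)} --- the right-module isomorphism $DB\cong B$ (Frobenius property of basic self-injective algebras) plus the factorization $\End_{C\otimes B}(M\otimes N)\cong\End_C(M)\otimes_K\End_B(N)$ --- together with the remark that the $m=1$ case suffices; since that case already yields an algebra isomorphism $(C\otimes B)^{[1]}\cong C^{[1]}\otimes B$, the induction is immediate and your Morita-invariance lemma \emph{(b)}, while correct, is not needed. Your closing worry about twisted actions also evaporates: only a right $(C\otimes B)$-module isomorphism $D(C\otimes B)\cong DC\otimes B$ is required, and ${}_1B_\phi\cong B$ as right $B$-modules regardless of the Nakayama automorphism $\phi$.
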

\begin{proof}
For simplicity, we omit the subscript $K$ on $\otimes_K$.
Note that $D(A \otimes B) \cong DA \otimes DB$ and $DB \cong B$ since $B$ is assumed to be basic and self-injective. It suffices to prove the proposition for $m=1$: 
\begin{align*}
(A \otimes B)^{[1]} &=\End_{A \otimes B}\big((A \otimes B) \oplus D(A \otimes B)\big) \cong \End_{A \otimes B}\big( (A \otimes B) \oplus (DA \otimes B)\big) \\
& \cong \End_{A \otimes B}\big( (A \oplus DA) \otimes B\big) \cong \End_A(A \oplus DA) \otimes \End_B(B) \cong A^{[1]} \otimes B. \qedhere
\end{align*}
\end{proof}

This gives us yet another new construction of minimal Auslander-Gorenstein algebras.

\begin{example}
Let $A=K\vec\DynA_2$, $B=K[x]/(x^2)$, and $C=A \otimes_K B$.
Then $C$ is a periodically $\nu$-formal algebra with
\begin{align*}
C^{[m]} \cong A^{[m]}\otimes_K B \cong T_{m+2,2}\otimes_KB,\;\;\text{ and }\;\; C^{(m)} \cong A^{(m)}\otimes_K B \cong T_{2m+2,3}\otimes_KB.
\end{align*}
By our results, $C^{[m]}$ (resp. $C^{(m)}$) is minimal Auslander-Gorenstein for all even $m$ (resp. $m=3t-1$ for some positive integer $t$).
One can show that $C^{[m]}$ is representation-finite and $C^{(m)}$ is representation-infinite for all $m \geq 1$.
\end{example}

The next example shows that SGC extensions need not preserve representation-type, Iwanaga-Gorenstein property, as well as non-Iwanaga-Gorenstein property.

\begin{example}
Let $K$ be an algebraically closed field.
Define $A:=KQ/I$ where
\[
Q:= \xymatrix{ & 1\ar@(ul,dl)_{a} \ar[r]^{b}   & 2, }
\]
and $I$ is generated by $a^2, ab=0$.
Note that there is no projective-injective $A$-module but $\Hom_A(DA,A)$ is non-zero.

We write the associated data in the following table, where $|\mathrm{ind}\Lambda|$ means the number of isomorphism classes of indecomposable $\Lambda$-modules.

\begin{center}
\begin{tabular}{ccc}
Algebra $\Lambda$ & $|\mathrm{ind}\Lambda|$ & $\idim \Lambda, \pdim D\Lambda$ \\ \hline
$A$       & 5        & $\infty$ \\
$A^{[1]}$ & 24       &  3       \\
$A^{[2]}$ & $\infty$ & $\infty$ 
\end{tabular}
\end{center}

We will not detail the calculations, which can be obtained by manual calculations (or more realistically using a computer, such as the QPA package).
To see that $A^{[2]}$ is representation-infinite, one can calculate that $\dim_K A^{[2]}=28$ and there is a simple $A^{[2]}$-module $S$ with $\dim_K\tau^{10}(S)=59$.
Therefore, there is an indecomposable $A^{[2]}$-module $M$ so that $\dim_K M>\max\{30,2\dim_K A^{[2]}\}$.
It follows by a critera of Bongartz \cite[3.9]{Bo} that $A^{[2]}$ is representation-infinite.
\end{example}

\end{document}